%% TEX 7(ascii) bits
\documentclass[12pt]{article}
\usepackage{amsmath,amstext,amsthm}
\usepackage{amssymb}
\usepackage{amsfonts}
\usepackage{graphicx}
\parskip2mm
\textheight23cm
\textwidth15.5cm
\parindent0cm
\hoffset-1cm
\voffset-2cm
\def\RR{{\bf R}}

\def\HH{{\bf H}}
\def\NN{{\bf N}}
\def\cb{\mathcal{B}}
\def\h{{\mathcal{H}}}
\def\H{{\mathcal{H}^n}}
\def\cR{{\mathcal{R}}}
\def\cC{{\mathcal{C}}}
\def\cCk{{\mathcal{C}_k}}
\def\cRe{{\mathcal{R}_{\varepsilon}}}
\def\cS{{\mathcal{S}}}
\def\cSe{{\mathcal{S}_{\varepsilon}}}
\def\tY{{\tilde{Y}}}
\def\tX{{\tilde{X}}}
\def\tf{{\tilde{f}}}
\def\tg{{\tilde{g}}}
\def\ae{{$\alpha$-ae }}

\def\ra{{\rightarrow}}
\def\Ar{{\Arrowvert}}
\def\lra{{\longrightarrow}}
\def\px{{\partial \tilde{X}}}

%%%%%%%%% raccourcis pour les mesures
\def\dmu{{d\mu_y^c}}
\def\dmut{{d\mu_y^c(\theta)}}

\def\dsit{{d\sigma_y^c(\theta)}}
\def\dnu{{d\nu_y^c}}
\def\dvg{{\mathop{\rm dv_g}\nolimits}}
\def\dvgo{{\mathop{\rm dv_{g_0}}\nolimits}}
\def\dvgt{{\mathop{\rm dv_{\tilde{g}}}\nolimits}}
\def\dh{{\mathop{\rm d_{\mathcal{H}}}\nolimits}}
\def\dgh{{\mathop{\rm d_{\mathcal{GH}}}\nolimits}}

\def\jac{\mathop{\rm Jac}\nolimits}
\def\dim{\mathop{\rm dim}\nolimits}
\def\degree{\mathop{\rm deg}\nolimits}

 % square bullet
\def\vol{\mathop{\rm vol}\nolimits}

\def\tr{\mathop{\rm trace}\nolimits}

\def\min{\mathop{\rm min}\nolimits}
 % square bullet

%\font\tengoth=eufm10
%\font\sevengoth=eufm7
%\font\fivegoth=eufm5
%\newfam\gothfam
%\textfont\gothfam=\tengoth \scriptfont\gothfam=\sevengoth
%\scriptscriptfont\gothfam=\fivegoth
%\def\goth{\fam\gothfam\tengoth}
%%%%%%%%%%%%%%%%%%%%%%%%%%%%%%%%%%%%%fin de macltex

\def\ric{\mathrm{Ric}}

\def\scal{\mathrm{Scal}}
\def\deg{\mathrm{deg}}
\def\minvol{\mathrm{minvol}}
\def\K{\mathrm{K}}

\newtheorem{theorem}{Theorem}[section]
\newtheorem{proposition}[theorem]{Proposition}
\newtheorem{definition}[theorem]{Definition}
\newtheorem{corollary}[theorem]{Corollary}
\newtheorem{lemma}[theorem]{Lemma}
\newtheorem{remark}[theorem]{Remark}
\title{Differentiable Rigidity under Ricci curvature lower bound}
\author{L.~Bessi\`eres, G.~Besson, G.~Courtois and S.~Gallot}
\begin{document}

\maketitle
\begin{abstract}
In this article we prove a differentiable rigidity result. Let $(Y, g)$ and $(X, g_0)$ be two closed $n$-dimensional Riemannian manifolds ($n\geqslant 3$) and $f:Y\to X$ be a continuous map of degree $1$. We furthermore assume that the metric $g_0$ is real hyperbolic and denote by $d$ the diameter of $(X,g_0)$. We show that there exists a number $\varepsilon:=\varepsilon (n, d)>0$ such that if the Ricci curvature of the metric $g$ is bounded below by $-n(n-1)$ and its volume satisfies $\vol_g (Y)\leqslant (1+\varepsilon) \vol_{g_0} (X)$ then the manifolds are diffeomorphic. The proof relies on Cheeger-Colding's theory of limits of Riemannian manifolds under lower Ricci curvature bound.
\end{abstract}

\section{Introduction}
Let $Y$ and $X$ be two closed manifolds.
The manifold $Y$ is said to \emph{dominate} $X$ if there is a continuous map $f: Y \rightarrow X$ of degree one.
An $n$-dimensional hyperbolic manifold $X$ has the smallest volume among the set of all Riemannian manifolds
$(Y,g)$ such that $Y$ dominates $X$ and the metric $g$ has Ricci curvature $\ric_g \geq -(n-1)g$.
In dimension $n=2$ this is a consequence of the Gauss-Bonnet formula and in dimension $n\geq 3$ 
this follows from the

\begin{theorem}\label{minvol}\cite{BCG}
Let $(X,g_0)$ be an $n$-dimensional closed hyperbolic manifold and
$Y$ a closed manifold which dominates $X$. Then, for any metric $g$ on $Y$ such that
$\ric_g  \geq   -(n-1)g$, one has
$\vol_g(Y) \geq \vol_{g_0}(X)$, and equality happens if and only if 
$(Y,g)$ and $(X,g_0)$ are isometric.
\end{theorem}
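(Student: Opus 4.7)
The argument combines two ingredients. First, the hypothesis $\ric_g\geq -(n-1)g$ bounds the volume entropy of $(Y,g)$. Indeed, Bishop--Gromov volume comparison applied to the universal cover $(\tY,\tilde g)$ shows that balls of radius $r$ in $\tY$ have volume at most that of balls of the same radius in hyperbolic $n$-space, so taking exponential growth rates gives
\[
h(g):=\lim_{r\to\infty}\frac 1 r \log\vol B_{\tilde g}(\tilde y,r)\ \leq\ n-1.
\]
Second, I would invoke the sharp entropy-volume inequality of \cite{BCG}: for every continuous degree-one map $f:Y\to X$ onto a closed hyperbolic $n$-manifold $(X,g_0)$ and every Riemannian metric $g$ on $Y$,
\[
h(g)^n\,\vol_g(Y)\ \geq\ (n-1)^n\,\vol_{g_0}(X),
\]
with equality if and only if $(Y,g)$ is isometric to $(X,g_0)$. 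The theorem follows at once: the entropy bound yields $\vol_g(Y)\geq ((n-1)/h(g))^{n}\vol_{g_0}(X)\geq \vol_{g_0}(X)$, and $\vol_g(Y)=\vol_{g_0}(X)$ forces equality throughout, hence the isometry with $(X,g_0)$.

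The heart of the argument is thus the entropy-volume inequality, which I would establish via the \emph{natural map} technique. Lift $f$ to a $\pi_1$-equivariant map $\tf:\tY\to\tX=\HH^n$. For each $c>h(g)$, the measure $e^{-c\,d_{\tilde g}(\tilde y,\,\cdot\,)}\,\dvgt$ is finite on $\tY$; transporting it to the sphere at infinity $\px$ via the asymptotic behavior of $\tf$ along geodesic rays produces a probability measure $\mu^c_{\tilde y}$ on $\px$ depending smoothly on $\tilde y$. Define $\tilde F_c(\tilde y)$ to be the barycenter of $\mu^c_{\tilde y}$, namely the unique minimum of the strictly convex function $z\mapsto\int_{\px} B_\theta(z)\,d\mu^c_{\tilde y}(\theta)$, where $B_\theta$ is a Busemann function of $\HH^n$. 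Since $\tilde F_c$ is smooth and $\pi_1$-equivariant, it descends to a smooth map $F_c:Y\to X$ homotopic to $f$ and therefore of degree one.

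The main technical obstacle is the sharp pointwise Jacobian estimate
\[
|\jac F_c(y)|\ \leq\ \left(\frac{c}{n-1}\right)^{\!n}.
\]
It is obtained by implicit differentiation of the barycenter equation, which expresses $dF_c$ in terms of the symmetric endomorphisms $H_c=\int_{\px} \mathrm{Hess}\,B_\theta\,d\mu^c_{\tilde y}$ and $K_c=\int_{\px} dB_\theta\otimes dB_\theta\,d\mu^c_{\tilde y}$, and then applying an algebraic inequality on determinants and traces that exploits the identity $H_c+K_c=\mathrm{Id}$ specific to $\HH^n$. Integrating against $\dvg$ and using $\deg F_c=1$ yields
\[
\vol_{g_0}(X)\ \leq\ \int_Y|\jac F_c|\,\dvg\ \leq\ \left(\frac{c}{n-1}\right)^{\!n}\vol_g(Y),
\]
and letting $c\searrow h(g)$ gives the entropy-volume inequality. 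For the rigidity case, equality propagates through each estimate: it forces $K_c(\tilde y)=\mathrm{Id}/n$ pointwise in the limit $c\to n-1$, which in turn forces the limiting natural map $\tilde F_{n-1}:\tY\to\HH^n$ to be a $\pi_1$-equivariant local isometry and, being of degree one, a global isometry.
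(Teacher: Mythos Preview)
Your proposal is correct and follows the same approach as the paper (which in turn sketches the argument from \cite{BCG}): combine the Bishop bound $h(g)\leq n-1$ with the entropy--volume inequality $h(g)^n\vol_g(Y)\geq (n-1)^n\vol_{g_0}(X)$, the latter proved via the natural maps $F_c$ and the pointwise Jacobian estimate $|\jac F_c|\leq (c/(n-1))^n$, then let $c\searrow h(g)$. One minor correction to your sketch: the measure on $\px$ is not obtained from the asymptotic behavior of $\tf$ along rays, but rather by pushing $\nu_y^c$ forward to $\tX$ via $\tf$ and then convolving with the visual probability measures $P_x$ (Step~3 of the construction in Section~3.1).
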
  

The minimal volume of a closed manifold $Y$ is defined as 
$$
\minvol (Y) = \inf \left\{ \vol_g (Y) \, / \quad |\K _g| \leq 1 \right\}
$$
where $\K _g$ 
is the sectional curvature of the Riemannian metric $g$.
An $n$-dimensional hyperbolic manifold $X$ is characterized by its minimal volume
among the set of all Riemannian manifolds
$Y$ such that $Y$ is homotopy equivalent to $X$. Namely,

\begin{theorem}\label{bminvol}\cite{be}
Let $X$ be an $n$-dimensional closed hyperbolic manifold and
$Y$ a closed manifold which dominates $X$. Then, $\minvol (Y) = \minvol (X)$ if and only if $X$ and $Y$ are
diffeomorphic.
\end{theorem}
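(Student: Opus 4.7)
The plan is to deduce Theorem~\ref{bminvol} from the differentiable rigidity result announced in the abstract (the paper's main theorem), using Theorem~\ref{minvol} as an auxiliary fact. The implication ``diffeomorphic $\Rightarrow$ equal minimal volume'' is immediate since $\minvol$ is a diffeomorphism invariant, so only the converse requires work.

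For the converse, I would first observe that $\minvol(X)=\vol_{g_0}(X)$: any metric $g$ on $X$ with $|\K_g|\leq 1$ satisfies $\K_g\geq -1$ and hence $\ric_g\geq -(n-1)g$, so Theorem~\ref{minvol} applied to the identity map $X\to X$ gives $\vol_g(X)\geq \vol_{g_0}(X)$, and the hyperbolic metric $g_0$ itself attains this bound. Assuming now $\minvol(Y)=\minvol(X)=\vol_{g_0}(X)$, and letting $\varepsilon=\varepsilon(n,d)>0$ be the threshold furnished by the main theorem, the definition of $\minvol$ yields a metric $g$ on $Y$ with $|\K_g|\leq 1$ and $\vol_g(Y)\leq (1+\varepsilon)\vol_{g_0}(X)$. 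Since $|\K_g|\leq 1$ forces $\ric_g\geq -(n-1)g$, and since $Y$ dominates $X$ by hypothesis, all the hypotheses of the main theorem are satisfied, and one concludes that $Y$ and $X$ are diffeomorphic.

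The genuine obstacle is not this formal reduction but the proof of the main theorem itself. That proof must upgrade the exact-equality rigidity of Theorem~\ref{minvol} into a \emph{quantitative} stability statement, and then promote a near-isometry into an actual smooth diffeomorphism. It is precisely at this step that the Cheeger--Colding theory of limits of manifolds with a lower Ricci curvature bound becomes indispensable: one has to control the structure of Gromov--Hausdorff limits of a sequence of near-minimizers $(Y,g_k)$, identify such a limit with $(X,g_0)$, and then convert a sufficiently fine near-isometry into a diffeomorphism, presumably by means of a natural map of barycentric type. By contrast, once this quantitative rigidity is in hand, the passage from $|\K|\leq 1$ to a Ricci lower bound and the extraction of a volume-minimizing sequence on $Y$ are essentially routine.
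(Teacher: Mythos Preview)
Your deduction is logically sound: Theorem~\ref{main-theorem} does imply Theorem~\ref{bminvol} exactly along the lines you sketch, and the paper itself says that the main theorem ``improves the above theorem~\ref{bminvol}''. Note, however, that the present paper does \emph{not} give a proof of Theorem~\ref{bminvol}; it is quoted from \cite{be} as a known background result. So there is no proof here to compare yours against.

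Your route is therefore a genuinely different one from the original in \cite{be}. That paper works directly with the two-sided sectional curvature bound $|\K_g|\leq 1$: together with the volume lower bound coming from Theorem~\ref{minvol}, this puts the approximating metrics into a Cheeger-type precompact class (uniform curvature bounds, volume bounded below, and---this is where the upper curvature bound is used---a diameter bound via the natural maps), and one concludes by $C^{1,\alpha}$ compactness rather than by Cheeger--Colding theory. What your approach buys is conceptual economy once Theorem~\ref{main-theorem} is available; what the original approach buys is self-containment, since it does not rely on the much deeper Ricci limit space machinery that the present paper needs precisely because the upper sectional bound has been dropped.
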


The aim of this paper is to show the following gap result. It improves the above theorem \ref{bminvol} since we now require a lower bound on the Ricci curvature instead of a pinching of the sectional curvature; moreover, under the hypothesis,  we prove that if the volume of $Y$ is close  to the volume of $X$ then these two manifolds are diffeomorphic. More precisely,

\begin{theorem}\label{main-theorem}
Given any integer $n\geq 3$ and $d> 0$, there exists $\varepsilon(n,d)>0$ 
such that the following holds. 
Suppose that $(X,g_0)$ is an $n$-dimensional closed hyperbolic manifold with diameter 
$\leq d$  and that $Y$ is a closed manifold which dominates $X$. 
Then $Y$ has a metric $g$ such that
\begin{eqnarray}
\ric_g & \geq &  -(n-1)g  \label{e0.1}\\
\vol_g(Y)& \leq & (1+\varepsilon)\vol_{g_0}(X) \label{e0.2}
\end{eqnarray}  
if and only if $f$ is homotopic to a diffeomorphism. 
\end{theorem}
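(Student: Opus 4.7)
The ``if'' direction is immediate: if $f$ is homotopic to a diffeomorphism $\phi : Y \to X$, the pulled-back metric $g := \phi^* g_0$ is hyperbolic, so $\ric_g = -(n-1)g$ and $\vol_g(Y) = \vol_{g_0}(X)$, whence \eqref{e0.1}--\eqref{e0.2} hold with any $\varepsilon \ge 0$. For the ``only if'' direction, the plan is to argue by contradiction, combining the Besson--Courtois--Gallot natural map technique used in Theorem~\ref{minvol} with Cheeger--Colding compactness and stability theory. Assume the statement fails: then there exist sequences $(X_k,g_{0,k})$ of closed hyperbolic $n$-manifolds with $\mathop{\rm diam}\nolimits(X_k,g_{0,k})\le d$, degree one maps $f_k:Y_k\to X_k$, and metrics $g_k$ on $Y_k$ with $\ric_{g_k}\ge -(n-1)g_k$ and $\vol_{g_k}(Y_k)\le(1+\frac{1}{k})\vol_{g_{0,k}}(X_k)$, while no $f_k$ is homotopic to a diffeomorphism. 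In dimension $n\ge 3$, closed hyperbolic $n$-manifolds of diameter $\le d$ split into only finitely many isometry classes (Mostow rigidity together with the Wang--Thurston finiteness theorem), so after extraction we may assume $(X_k,g_{0,k})=(X,g_0)$ is fixed.

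For each $k$ I would introduce the BCG natural map $F_{c,k}:Y_k\to X$ associated with $f_k$ for parameter $c>n-1$. These maps are smooth, homotopic to $f_k$, and satisfy the pointwise estimates $|\jac F_{c,k}|\le\bigl((n-1)/c\bigr)^{n}$ and $\|dF_{c,k}\|\le C(n,c)$, the bound on the volume entropy by $n-1$ coming from the Ricci hypothesis via Bishop--Gromov on the universal cover. Using $\degree F_{c,k}=1$ and letting $c\searrow n-1$,
\[
\vol_{g_0}(X)\;\le\;\int_{Y_k}|\jac F_{c,k}|\,\dvg\;\le\;\vol_{g_k}(Y_k)\;\le\;\Bigl(1+\frac{1}{k}\Bigr)\vol_{g_0}(X),
\]
so $|\jac F_{c,k}|\to 1$ in $L^{1}$. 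Coupled with the pointwise bound on $dF_{c,k}$, a linear-algebra argument on the singular values of $dF_{c,k}$ forces $F_{c,k}^{*}g_0$ to be close to $g_k$ in $L^{1}$; combining the derivative bound, the surjectivity of $F_{c,k}$ and $\mathop{\rm diam}\nolimits(X,g_0)\le d$ then yields a uniform upper bound on $\mathop{\rm diam}\nolimits(Y_k,g_k)$.

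With uniform Ricci lower bound, volume bound and diameter bound in place, Gromov precompactness delivers, after extraction, a Gromov--Hausdorff limit $(Y_\infty,d_\infty)$ of $(Y_k,g_k)$; the natural maps subconverge to a Lipschitz map $F_\infty:Y_\infty\to X$, and Colding's volume continuity theorem together with the almost-volume-preserving property of the $F_{c,k}$ forces $F_\infty$ to be a measure-preserving $1$-Lipschitz surjection between spaces of equal $n$-dimensional Hausdorff measure, hence an isometry. Therefore $(Y_k,g_k)$ converges in Gromov--Hausdorff topology to the smooth manifold $(X,g_0)$, and since the sequence is non-collapsed (volumes converge to $\vol_{g_0}(X)>0$), Cheeger--Colding's smooth stability theorem produces, for $k$ large, a diffeomorphism $\Phi_k:Y_k\to X$ that is $C^{0}$-close to $F_{c,k}$. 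Because $F_{c,k}$ is by construction homotopic to $f_k$ and sufficiently $C^{0}$-close maps between the same smooth manifolds are homotopic, $\Phi_k\simeq f_k$, contradicting the assumption.

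The main obstacle is the upgrade from the integral almost-isometry provided by the BCG estimates to a genuine Gromov--Hausdorff convergence $(Y_k,g_k)\to(X,g_0)$, and the accompanying identification of the limit: this is where one needs the full strength of Cheeger--Colding's segment and volume-continuity inequalities, together with the non-collapsing that comes from the degree one hypothesis. A secondary subtlety is ensuring that the diffeomorphism produced by stability is homotopic to $f_k$ rather than merely of the correct degree, but this is handled by keeping the canonical natural map $F_{c,k}$ as the bridge throughout the argument.
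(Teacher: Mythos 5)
Your overall architecture (contradiction, BCG natural maps, Cheeger--Colding limits, smooth stability) matches the paper, but there is a genuine gap at the pivotal step where you claim a \emph{uniform upper bound on} $\mathop{\rm diam}\nolimits(Y_k,g_k)$ early in the argument. You derive it from ``$|\jac F_{c,k}|\to 1$ in $L^1$, the pointwise bound $\|dF_{c,k}\|\le C(n,c)$, surjectivity of $F_{c,k}$, and $\mathop{\rm diam}\nolimits(X,g_0)\le d$''. This does not follow: $F_{c,k}$ can only be controlled from above (it is almost contracting), so surjectivity onto a space of diameter $d$ puts no upper bound on the diameter upstairs, and the $L^1$/almost-everywhere isometry information is compatible with long thin ``fingers'' of arbitrarily small volume and arbitrarily large diameter in $Y_k$ (perfectly consistent with $\ric\ge -(n-1)$). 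Moreover, the global gradient bound $\|dF_{c,k}\|\le C(n,c)$ that your argument leans on is not available: the general estimate is $\|d_yF_c\|\le c\sqrt{\lambda_n^c}/(1-\lambda_n^c)$, which blows up as the top eigenvalue of $H^c_y$ approaches $1$; the paper only proves $\|d_yF_c\|\le 2\sqrt n$ on balls $B(y_g,R)$ around a carefully chosen basepoint, with $\varepsilon,\delta$ small depending on $R$ (Lemma \ref{l2.5}), the basepoint $y_g$ coming from Gromov's isolation theorem via the positivity of the simplicial volume, not from the total volume bound. Because of exactly this missing diameter bound, the paper works with \emph{pointed} Gromov--Hausdorff convergence $(Y_k,g_k,y_{g_k})\to(Y_\infty,d_\infty,y_\infty)$, builds the limit map on larger and larger balls (Proposition \ref{prop3.3}), and recovers the diameter bound only at the very end, as a consequence of Proposition \ref{p4.1} stating that $F:Y_\infty\to X$ is an isometry. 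Your non-collapsing claim has the same circularity: without a diameter bound, $\vol_{g_k}(Y_k)\to\vol_{g_0}(X)>0$ does not by itself prevent collapse at the relevant basepoints.

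A secondary, smaller gap: you assert that a volume-preserving $1$-Lipschitz surjection between spaces of equal Hausdorff measure is ``hence an isometry''. This is where most of the paper's work lies (Section 4): one must prove injectivity on the almost-regular set via density estimates, openness via a local degree argument (the limit space is a priori not a manifold), a local $(1+c(\varepsilon))$-bi-Lipschitz estimate via ball-overlap volume comparisons, and then connect points by geodesics avoiding $F(\cS)$ using $\mathcal{H}^{n-2}(\cS)=0$ to show the induced intrinsic distance agrees with $d_{g_0}$. If you repair the diameter issue by adopting the pointed-limit strategy and supply these steps, the rest of your outline (volume continuity, smooth stability, and homotoping the stability diffeomorphism to $f_k$ through the natural map) is in line with the paper.
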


In \cite{Far-Jon} the authors prove the existence of closed n-dimensional manifolds $Y$ which are homeomorphic to a closed n-dimensional hyperbolic manifold $(X, g_0)$ but not diffeomorphic to it. An immediate corollary of the above theorem is the following.

\begin{corollary}
With the above notations, there exists $\varepsilon>0$ depending on $n$ and on the diameter of $X$ with the property that for any such $Y$ and any Riemannian metric $g$ on $Y$ whose Ricci curvature is bounded below by $-(n-1)$ one has,
$$\vol (Y, g)> (1+\varepsilon )\vol (X, g_0)\,.$$
\end{corollary}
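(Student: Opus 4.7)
The plan is to derive the corollary directly from Theorem \ref{main-theorem} by a contradiction argument, exploiting the defining feature of the Farrell--Jones manifolds $Y$ of \cite{Far-Jon}: they are homeomorphic, but not diffeomorphic, to $X$.

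First I would set $d$ equal to the diameter of $(X,g_0)$ and let $\varepsilon=\varepsilon(n,d)$ be the constant produced by Theorem \ref{main-theorem}. Any $Y$ as in \cite{Far-Jon} admits a homeomorphism $f:Y\to X$, and after reversing the orientation of $Y$ if necessary, one may assume $\deg f=+1$; thus $Y$ dominates $X$ in the sense of the introduction. This step is purely a matter of orientation bookkeeping and uses nothing about the metric.

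Next I would argue by contradiction. Suppose some Riemannian metric $g$ on $Y$ satisfies both $\ric_g\geq -(n-1)g$ and $\vol_g(Y)\leq (1+\varepsilon)\vol_{g_0}(X)$. Then the full hypothesis of Theorem \ref{main-theorem} is met with this $f$, $g$ and $g_0$, so its conclusion applies: $f$ is homotopic to a diffeomorphism $Y\to X$. In particular $Y$ and $X$ are diffeomorphic, contradicting the construction of \cite{Far-Jon}. Consequently no such $g$ exists, which is exactly the strict inequality $\vol(Y,g)>(1+\varepsilon)\vol(X,g_0)$ claimed in the corollary.

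There is essentially no obstacle in this argument: the corollary is a formal consequence of Theorem \ref{main-theorem} combined with the existence result of \cite{Far-Jon}. The only mild subtlety is ensuring that the comparison map can be taken of degree exactly one, which, as noted above, is a trivial orientation normalization and does not interact with the curvature or volume hypotheses.
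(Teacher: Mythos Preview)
Your proof is correct and is precisely the argument the paper has in mind: the corollary is presented there as ``an immediate corollary'' of Theorem~\ref{main-theorem}, and you have spelled out that immediate deduction---a homeomorphism gives a degree-one map (after an orientation flip if needed), so if the volume bound held, Theorem~\ref{main-theorem} would force $Y$ to be diffeomorphic to $X$, contradicting the Farrell--Jones construction.
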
 

To be more precise in \cite{Far-Jon}  the manifold $Y$ is obtained 	as follows:
$$Y=X\sharp \Sigma\,,$$
where $\Sigma$ is an exotic sphere. Not every closed hyperbolic manifold $X$ gives rise to such a $Y$ that is (obviously) homeomorphic but not diffeomorphic to $X$. Indeed, we may have to take a finite cover of $X$. But when we get one construction that works, it does on any finite cover $\overline{X}$ of $X$ as well.  The authors also prove that by taking covers of arbitrary large degree we can put on $Y$ a metric whose sectional curvature is arbitrarily pinched around, say $-1$. The stronger the pinching, the larger the degree. Now assume that $\varepsilon$ could be taken independent of the diameter of $X$; applying the results of \cite{BCG} one could show that the volumes of the two manifold are very close when the pinching on $Y$ is very sharp (close to $-1$). The volume of $Y$ endowed with this pinched metric could then be taken smaller than $(1+\varepsilon)\vol (X, g_0)$, by choosing a covering of large degree; the manifolds though are not diffeomorphic. This gives a contradiction and shows that "size" of $X$ has to be involved in the statement of the theorem, for example its diameter.   

\thanks{This work was supported by the grant ANR: ANR-07-BLAN-0251.}

\subsection{Sketch of the Proof }
We argue by contradiction. Suppose that
there is a sequence $(X_k)_{k \in \NN}$ of closed hyperbolic manifolds with 
diameter $\leq d$ and a sequence of closed manifolds 
$Y_k$, of degree one continuous maps $f_k:Y_k \ra X_k$ and 
metrics $g_k$ on $Y_k$ satisfying the hypothesis (\ref{e0.1}) and 
(\ref{e0.2}) for some $\varepsilon_k$ going to zero. Since $f_k$ is of degree one and 
$X_k$ is hyperbolic, it is equivalent to say (thanks to Mostow's rigidity Theorem) that $f_k$ is homotopic to a 
diffeomorphism or simply that $X_k$ and $Y_k$ are diffeomorphic. 
We thus assume that $Y_k$ and $X_k$ are not diffeomorphic. One then shows that 
up to a subsequence, for large $k$, 
$Y_k$ is diffeomorphic to a closed manifold $Y$, $X_k$ is diffeomorphic 
to a closed manifold $X$, and $X$ and $Y$ are diffeomorphic. One argues as 
follows: by the classical finiteness results we get the sub-convergence of the sequence $\{ X_k\}$.
 Indeed, the 
curvature is $-1$, the diameter is bounded by hypothesis, and there is 
a universal lower bound for the volume of any closed hyperbolic manifold 
of a given dimension, thanks to Margulis' Lemma (see \cite{Bur-Zal}).  
Cheeger's finiteness theorem then applies. Moreover,  
on a closed manifold of dimension $\geq 3$, there is at most one hyperbolic 
metric, up to isometry. We can therefore suppose that $X_k=X$ is a fixed 
hyperbolic manifold. 
The inequality proved in theorem \ref{minvol} provides a lower bound for the volume of $Y_k$ as it 
is explained below. We have no a priori bounds on the diameter 
of $(Y_k,g_k)$, but we can use Cheeger-Colding's theory to obtain 
sub-convergence in the pointed Gromov-Hausdorff topology to a complete metric 
space $(Z,d)$ with small singular set. To obtain more geometric control, 
the idea is to use the natural maps between $Y_k$ and $X$ (see \cite{BCG}). One can show that they sub-convergence to 
a limit map between $Z$ and $X$, which is an isometry. Then $X$ is an
$n$-dimensional smooth closed Riemannian manifold
which is the Gromov-Hausdorff limit of
the sequence $(Y_k,g_k)$ of Riemannian manifold of dimension $n$ satisfying the lower bound 
(\ref{e0.1}) on Ricci curvature, therefore 
$X$ and $Y_k$ are diffeomorphic for large $k$ by a theorem of J. Cheeger and T. Colding.

The paper is organised as follows. 
The construction and the properties of the natural maps are given in Section 2. In Section 3, we construct the limit space $Z$ and the limit map $F:Z \ra X$. 
In Section 4, we prove that $F$ is an isometry and conclude. 

\subsection{Maps of arbitrary degree, scalar curvature}

For two closed manifolds $Y$ and $X$ we said above that $Y$ dominates $X$ if there exists a map of degree one from $Y$ onto $X$. We could have required that there exists a map $f:Y \to X$ of non-zero degree. The main theorem of \cite{BCG} was stated and proved in this set up. More precisely, the following statement holds

\begin{theorem}\cite{BCG}
Let $(X,g_0)$ be an $n$-dimensional closed hyperbolic manifold and
$Y$ a closed manifold such that there exists a map $f: Y\to X$ with non-zero degree denoted deg(f). Then, for any metric $g$ on $Y$ such that
$\ric_g  \geq   -(n-1)g$, one has
$\vol_g(Y) \geq |\deg (f)| \vol_{g_0}(X)$, and equality happens if and only if $f$ is homotopic to a Riemannian 
covering (\textsl{i.e.} locally isometric) of degree |deg(f)| from $(Y,g)$ onto $(X,g_0)$.
\end{theorem}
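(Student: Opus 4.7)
The plan is to adapt the barycenter method of \cite{BCG}, which proves the degree one case (Theorem \ref{minvol}), to maps of arbitrary nonzero degree. The key observation is that every step of the argument -- construction of the natural map, pointwise Jacobian estimate, and equality case analysis -- is local at $y\in Y$ and uses neither the injectivity of $f_*$ on fundamental groups nor the fact that $\deg(f)=1$; the degree enters only through the final degree formula.

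First, lift $f$ to an equivariant map $\tf:\tY\to\tX=\HH^n$ between universal covers. By the Bishop-Gromov volume comparison, the hypothesis $\ric_g\geq -(n-1)g$ implies that the volume entropy $h(g)$ of $\tY$ is at most $n-1$, so the measure $\dmu(z)=e^{-c\,d(y,z)}\dvgt(z)$ on $\tY$ has finite total mass for every $c>h(g)$. Using $\tf$ and the Busemann functions on $\HH^n$, construct an equivariant family of probability measures $\dsi$ on $\partial\HH^n$ depending smoothly on $y$. Define $\tilde{F}_c(y)\in\HH^n$ as the unique minimizer of the strictly convex functional $x\mapsto\int_{\partial\HH^n}B_\theta(x,x_0)\,\dsit$, where $B_\theta(\cdot,x_0)$ is the Busemann function at $\theta$ normalized at a basepoint $x_0$. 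By equivariance under the deck transformation group, $\tilde{F}_c$ descends to a smooth map $F_c:Y\to X$, which is homotopic to $f$.

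Differentiating the implicit equation that defines $\tilde{F}_c$ and applying Cauchy-Schwarz to the pair of symmetric endomorphisms $\int_{\partial\HH^n} dB_\theta\otimes dB_\theta\,\dsit$ and $\int_{\partial\HH^n}\nabla dB_\theta\,\dsit$, which on $\HH^n$ are linked by the identity $\nabla dB_\theta=g_0-dB_\theta\otimes dB_\theta$, yields the pointwise bound
\begin{equation*}
|\jac F_c(y)|\leq\left(\frac{c}{n-1}\right)^{n},
\end{equation*}
exactly as in the proof of the degree one case. Combining this with the degree formula, one obtains
\begin{equation*}
|\deg(f)|\vol_{g_0}(X)=\left|\int_Y F_c^{*}(\dvgo)\right|\leq\int_Y|\jac F_c|\,\dvg\leq\left(\frac{c}{n-1}\right)^{n}\vol_g(Y),
\end{equation*}
and letting $c\to n-1$ produces the claimed inequality.

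For the equality case one traces the inequalities backward. Equality forces $h(g)=n-1$, forces the Cauchy-Schwarz step in the Jacobian estimate to be an equality almost everywhere, and forces $|\jac F_c|\to 1$ almost everywhere as $c\to n-1$. These rigid conditions imply that the limiting equivariant map $\tilde{F}$ has isometric differential everywhere and is therefore a local isometry from $(\tY,\tilde{g})$ onto $\HH^n$; descending, $F:Y\to X$ is a Riemannian covering, necessarily homotopic to $f$ and of degree $|\deg(f)|$. The main technical obstacle is the sharp pointwise Jacobian estimate: it rests on the special linear-algebraic inequality between the two symmetric endomorphisms built from $\dsi$ that is available on $\HH^n$, and it requires the natural measure construction to be precisely the right one so that no slack is introduced. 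Once this estimate is in place, the rest of the argument is a direct translation of the proof of Theorem \ref{minvol}.
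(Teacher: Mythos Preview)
Your proposal is correct and follows exactly the approach of \cite{BCG}, which the present paper summarizes in Section~2: construct the equivariant natural maps $F_c$ via barycenters of the measures $\sigma_y^c$, derive the pointwise bound $|\jac F_c|\le (c/(n-1))^n$ from Lemma~\ref{l2.1} and the algebraic Lemma~\ref{linear-algebra}, integrate using the degree formula, and let $c\to h(g)\le n-1$. Your observation that the degree enters only at the integration step, so that nothing in the construction or the pointwise estimate needs to change when $|\deg(f)|>1$, is precisely the point; the paper itself does not reprove this cited theorem but relies on the same machinery. One small caution: your equality-case sketch speaks of a ``limiting equivariant map $\tilde F$'' as if convergence of the $F_c$ were automatic---in \cite{BCG} this step requires separate work (uniform Lipschitz bounds and a compactness/rigidity argument), so be sure not to gloss over it.
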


With the technique developed  in this article,  the following result can be proved

\begin{theorem}\label{degree}
Given any integer $n\geq 3$ and $d> 0$, there exists $\varepsilon(n,d)>0$ 
such that the following holds. 
Suppose that $(X,g_0)$ is an $n$-dimensional closed hyperbolic manifold with diameter 
$\leq d$  and that $Y$ is a closed manifold such that there exists a map $f: Y\to X$ with non-zero degree. 
Then $Y$ has a metric $g$ such that
\begin{eqnarray}
\ric_g & \geq &  -(n-1)g  \label{e0.1}\\
\vol_g(Y)& \leq & (1+\varepsilon)|\deg (f)|\vol_{g_0}(X) \label{e0.2}
\end{eqnarray}  
if and only if $f$ is homotopic to a covering of degree $|\deg (f)|$. 
\end{theorem}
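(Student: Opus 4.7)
We proceed as in Theorem \ref{main-theorem} by contradiction: assume a sequence of closed hyperbolic manifolds $(X_k,g_{0,k})$ with $\operatorname{diam}\le d$, continuous maps $f_k:Y_k\to X_k$ of non-zero degree $d_k$, and metrics $g_k$ on $Y_k$ satisfying $\ric_{g_k}\ge -(n-1)g_k$ and $\vol_{g_k}(Y_k)\le(1+\varepsilon_k)|d_k|\vol_{g_{0,k}}(X_k)$ with $\varepsilon_k\to 0$, but such that no $f_k$ is homotopic to a $|d_k|$-sheeted covering. Cheeger's finiteness theorem (using $|\K|=1$, the diameter bound $\le d$, and the Margulis lower bound on the volume of hyperbolic $n$-manifolds) combined with Mostow's rigidity reduces, along a subsequence, to a fixed target $(X,g_0)$. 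Since $\pi_1(X)$ is finitely generated it has only finitely many subgroups of each given index, so along a further sub-sequencing we may assume that either (a) $H_k:=(f_k)_\ast\pi_1(Y_k)$ is constant equal to some $H\subset\pi_1(X)$ of index $e$, in which case $\bar X\to X$ is a fixed finite Riemannian cover and $f_k$ lifts to a $\pi_1$-surjective map $\bar f_k:Y_k\to\bar X$ of degree $m_k=d_k/e$, or (b) $[\pi_1(X):H_k]\to\infty$.

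In case (a) the volume hypothesis rewrites as $\vol_{g_k}(Y_k)\le(1+\varepsilon_k)|m_k|\vol_{g_0}(\bar X)$. The natural map construction of \cite{BCG}, applied with target $\bar X$, produces for each $k$ a $C^1$ map $F_k:Y_k\to\bar X$ homotopic to $\bar f_k$ satisfying the pointwise Jacobian bound $|\jac F_k|\le 1$ together with the topological identity
$$
\int_{Y_k}\jac F_k\,\dvg=m_k\vol_{g_0}(\bar X),
$$
so $|\jac F_k|$ converges to $1$ in $L^1(Y_k)$. Assuming further that $m_k=m$ is bounded and passing to a subsequence, we transfer Sections~2--4 almost verbatim with $\bar X$ in place of $X$: Cheeger--Colding theory produces a pointed Gromov--Hausdorff limit space $Z$ of $(Y_k,g_k)$ and a limit map $F:Z\to\bar X$; the rigidity analysis of Section~4, reading equality in the Jacobian bound as a local isometry rather than an isometry, forces $F$ to be a local isometry of degree $m$, so $Z$ is a smooth hyperbolic $m$-fold Riemannian cover of $\bar X$. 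The Cheeger--Colding stability theorem then identifies $Y_k$ with $Z$ diffeomorphically for large $k$, and the homotopy between $F_k$ and $\bar f_k$ yields the forbidden $|d_k|$-sheeted covering of $X$, a contradiction.

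The main obstacle is the unbounded-degree case, namely $[\pi_1(X):H_k]\to\infty$ in case (b) or $m_k\to\infty$ in case (a), in which $\vol_{g_k}(Y_k)\to\infty$ and no global Gromov--Hausdorff limit of $(Y_k,g_k)$ exists. Treating it requires carrying out the convergence argument in the pointed, equivariant Gromov--Hausdorff category at the level of the universal covers $\tilde Y_k\to\HH^n$, using the isometric action of the deck groups and the $\pi_1$-equivariance of the natural maps $\tilde F_k$ provided by \cite{BCG}. The pointwise local-isometry conclusion, once established on the universal covers via the integrated Jacobian estimate, descends to show that $Y_k$ is a smooth Riemannian covering of $X$ of degree $|d_k|$ to which $f_k$ is homotopic, again contradicting the hypothesis and finishing the proof.
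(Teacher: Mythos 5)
The paper itself does not actually prove Theorem \ref{degree}: the authors state that the proof combines the technique of the degree-one case with the treatment of arbitrary degree in \cite{be}, warn that the higher degree ``yields extra technicalities'', and explicitly leave the proof to the reader. So there is no written argument to compare yours with step by step; your outline is consistent with the strategy the authors indicate (argue by contradiction, fix the target, use the natural maps of \cite{BCG}, pass to a Cheeger--Colding limit, show the limit map is a Riemannian covering). Judged as a proof, however, it has genuine gaps.

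The decisive one is the unbounded-degree case, which you correctly identify as the main obstacle but then only address with a one-sentence programme. Since $\varepsilon$ must depend only on $(n,d)$, a contradicting sequence may have $|\deg f_k|\to\infty$, hence $\vol_{g_k}(Y_k)\to\infty$, and the compact Gromov--Hausdorff argument of Sections 3--4 is unavailable. Saying that one should ``carry out the convergence argument in the pointed, equivariant category on the universal covers'' names a plausible framework but proves nothing: you do not explain how the deck-group actions and the equivariance of the maps $\tilde F_{c_k}$ pass to the limit, nor -- most importantly -- how one returns from a statement about the pointed limit space to the topological conclusion about $f_k$ for finite large $k$; the analogue of the Cheeger--Colding stability theorem invoked at the end of Theorem \ref{main-theorem} is not available off the shelf in this noncompact equivariant setting, and this is precisely where the work lies. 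Second, even in your case (a) with bounded $m_k$, the assertion that Sections 2--4 transfer ``almost verbatim'' glosses over exactly the technicalities the authors warn about: when $m\geq 2$ the limit map cannot be injective, so Lemma \ref{l4.2} (injectivity on $\cRe$ via density $>1/2$) fails as stated and must be replaced by a bounded-multiplicity statement (at most $m$ preimages, each of almost maximal density), with corresponding modifications of Lemmas \ref{l2.7}, \ref{l4.3}, \ref{l4.4} and Proposition \ref{p4.1}; and the endgame must produce not merely a diffeomorphism of $Y_k$ with an $m$-fold cover but a homotopy from $f_k$ to a covering map, which requires a $\pi_1$-level argument (asphericity of $X$ plus Mostow rigidity) that you only gesture at. In short, your proposal is a reasonable sketch of the route the authors have in mind, but as written it does not close the argument, above all in the unbounded-degree case.
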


The proof is essentially the one described above; it uses the technique described below and the treatment of an arbitrary degree given in \cite{be}. The fact that the degree can be, in absolute value, greater than one yields extra technicalities. For the sake of clarity we shall omit this proof in the present article and  leave it to the reader. A corollary is,

\begin{corollary}
Let $(X, g_0)$ be a closed $n$-dimensional hyperbolic manifold, then there exists $\varepsilon>0$, such that, for any metric $g$ on the connected
sum $X\sharp X$ satisfying that its Ricci curvature of $g$ is not smaller than $-(n-1)$,
$$\vol (X\sharp X, g) \geq 2(1+\varepsilon )\vol (X, g_0)\,.$$  
\end{corollary}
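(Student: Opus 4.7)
The plan is to apply Theorem~\ref{degree} to a natural degree-$2$ map $f : X \sharp X \to X$ and derive a contradiction from elementary algebraic topology.

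First, I exhibit the map. Fix an orientation on $X$. The connected sum $X \sharp X$ is built by removing a small open ball from each copy of $X$ and identifying the resulting boundary spheres by an orientation-reversing diffeomorphism; the orientations on the two summands then extend to a global orientation on $X \sharp X$. The pinch map $X \sharp X \to X \vee X$ collapsing the gluing sphere to a point, followed by the fold map $X \vee X \to X$ (identity on each factor), yields a continuous map $f : X \sharp X \to X$ with $|\deg(f)| = 2$.

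Next, let $\varepsilon := \varepsilon(n, d) > 0$ be the constant provided by Theorem~\ref{degree} for dimension $n$ and $d = \operatorname{diam}(X, g_0)$. Suppose, for contradiction, that some metric $g$ on $X \sharp X$ satisfies $\ric_g \geq -(n-1) g$ and
$$\vol_g(X \sharp X) \leq 2(1+\varepsilon) \vol_{g_0}(X).$$
Theorem~\ref{degree} then asserts that $f$ is homotopic to a covering $X \sharp X \to X$ of degree $2$.

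Finally, I rule this out by a fundamental-group argument. A $2$-fold covering of $X$ has fundamental group isomorphic to an index-$2$ subgroup of $\pi_1(X)$; this is a torsion-free uniform lattice in $\operatorname{Isom}(\HH^n)$, hence a one-ended hyperbolic group. On the other hand, since $n \geq 3$, van Kampen's theorem gives
$$\pi_1(X \sharp X) \cong \pi_1(X) * \pi_1(X),$$
a non-trivial free product of two infinite groups, which has infinitely many ends by Stallings' theorem. Therefore $\pi_1(X \sharp X)$ cannot be isomorphic to any finite-index subgroup of $\pi_1(X)$, and in particular $f$ is not homotopic to any $2$-fold covering. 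This contradiction shows that no such metric $g$ exists, whence every Riemannian metric $g$ on $X \sharp X$ with $\ric_g \geq -(n-1)g$ satisfies $\vol_g(X \sharp X) > 2(1+\varepsilon) \vol_{g_0}(X)$.

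The only non-formal step is the purely topological observation that $\pi_1(X \sharp X)$ is not a subgroup of $\pi_1(X)$, which is handled by the end-count above; all analytic content of the corollary is already packaged in Theorem~\ref{degree}, so there is no substantive obstacle beyond identifying the right map and checking that it is not homotopic to a cover.
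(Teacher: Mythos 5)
Your proof is correct and is exactly the argument the paper intends (the corollary is stated as an immediate consequence of Theorem~\ref{degree}, whose application to the degree-$2$ pinch-and-fold map $X\sharp X\to X$ is left implicit): the volume bound would force $f$ to be homotopic to a $2$-sheeted covering, which is impossible since $\pi_1(X\sharp X)\cong\pi_1(X)*\pi_1(X)$ cannot be isomorphic to a finite-index (hence one-ended) subgroup of the cocompact lattice $\pi_1(X)$. One cosmetic point: that a free product of two infinite groups has infinitely many ends is classical (Hopf/Freudenthal, or the Bass--Serre tree); Stallings' theorem is the converse splitting statement and is not needed here.
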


We may now ask whether such a result could be true with a lower bound on the scalar curvature instead of a lower bound on the Ricci curvature. The situation in dimension $3$, completely clarified by Perelman's work, shows that the answer to this question is negative. More precisely, if $(X,g_0)$ is a $3$-dimensional closed hyperbolic manifolds, a consequence of  \cite[Inequality 2.10]{And} is that, 
$$\inf\{\vol (X\sharp X, g)/\,\, \scal (g)\geq -6\}= 2\vol (X, g_0)\,.$$
In dimension greater or equal to $4$, it follows from \cite{Kob} and the solution to the Yamabe problem that,
$$\inf\{\vol (X\sharp X, g)/\,\, \scal (g)\geq -6\}\leqslant 2\vol (X, g_0)\,.$$

\section{Some a priori control on $(Y,g)$}

Some a priori control on the metric $g$ will be needed in section 2 and 3. We give here the necessary results. 

Let $(X,g_{0})$ be an hyperbolic manifold and $Y$ be a manifold satisfying  the assumptions of  Theorem 
\ref{main-theorem}. For any riemannian metric $g$ on $Y$ satisfying the curvature assumption (\ref{e0.1}), one has the following inequality
\begin{equation}
\vol_g(Y) \geq \vol_{g_0}(X) \,.\label{e1.2}
\end{equation}
It is a consequence of Besson-Courtois-Gallot's inequality (see \cite{BCG}) 
\begin{equation}
 h(g)^n \vol_g(Y) \geq h(g_0)^n \vol_{g_0}(X)\,, \label{ineq-BCG}
\end{equation}
where $h(g)$ is the volume entropy, or the critical exponent, of the metric 
$g$, \textsl{i.e.}: 
$$ h(g)=\lim_{R\ra +\infty} \frac{1}{R}\ln(\vol_{\tg}(B_{\tg}(x,R)))\,,$$
where $\tg$ is the lifted metric on $\tY$. Indeed, any metric $g$ on $Y$ which satisfies (\ref{e0.1}), verifies, by  Bishop's Theorem,
\begin{equation}
h(g) \leq h(g_0) = n-1\,. \label{e1.3}
\end{equation} 

One can obtain a lower bound of the volume of some balls by Gromov's isolation 
Theorem (see \cite[Theorem 0.5]{Gro2}). It shows that  
if the simplicial volume  $||Y||$ -- a topological invariant also called Gromov's norm-- of $Y$  is non-zero, then for any riemannian metric $g$ on $Y$ 
satisfying the curvature assumption (\ref{e0.1}), there exists at least one point 
$y_g \in Y$ such that
\begin{equation}
\vol_g(B(y_g,1)) \geq v_n>0 \label{e1.1}.
\end{equation}
Here $B(y_g,1)$ is the geodesic ball of radius $1$ for the metric $g$ and $v_n$ 
is a universal constant. This theorem applies in our situation since, by an elementary property of the simplicial volume, $|| Y|| \geq ||X||$ if there is a degree one map from $Y$ to $X$ (see \cite{Gro2}). On the other hand,
 $X$ has an hyperbolic metric and hence $ ||X|| >0$ by Gromov-Thurston's Theorem (see \cite{Gro2}). 
 
 Given this universal lower bound for the volume of a unit ball $B(y_{g},1)$, the volume of 
 any ball $B(y,r)$ is bounded from bellow in terms of $r$ and $d(y_{g},y)$. Indeed, recall that under the curvature assumption 
 (\ref{e0.1}),  Bishop-Gromov's Theorem shows that  
 for any $0 < r \leq R$, one has 
 \begin{equation}
 \frac{\vol_{g}(B(y,r))}{\vol_g(B(y,R))} \geq \frac{\vol_{\HH^n}(\rm{B}_{\HH^n}(r))}{\vol_{\HH^n}(\rm{B}_{\HH^n}R))} \label{BG},
 \end{equation}
 where $\rm{B}_{\HH^n}(r)$ is a ball of radius $r$ in the hyperbolic space $\HH^n$. 
 As $B(y_{g},1) \subset B(y,1+d(y_{g},y)+r)$, one deduces from (\ref{BG}) that  
 \begin{eqnarray}
  \vol_{g}(B(y,r)) & \geq & \vol_{g}\left(B(y,1+d(y_{g},y)+r\right)) \frac{\vol_{\HH^n}(\rm{B}_{\HH^n}(r))}{\vol_{\HH^n}(\rm{B}_{\HH^n}(1+d(y_{g},y)+r))}  \\
 & \geq & v_{n}\frac{\vol_{\HH^n}(\rm{B}_{\HH^n}(r))}{\vol_{\HH^n}(\rm{B}_{\HH^n}(1+d(y_{g},y)+r))} \label{loc-vol}\,.
\end{eqnarray}

The curvature assumption (\ref{e0.1}) and the volume estimates (\ref{BG}) or  (\ref{loc-vol}) are those required to use the non-collapsing part of Cheeger-Colding's Theory, as we shall see in section 3.  

\section{The natural maps}
In the following sections 2.1 and 2.2 we recall the construction and the main properties of the 
natural maps defined in \cite{BCG} (see also \cite{BCG2}).
\subsection{Construction of the natural maps}

% For each $g$ such that (\ref{e2.1}) and (\ref{e2.2}), 
%parameter $c$ such that 
%\begin{equation}
% h(g) < c \leq h(g) + \delta   \label{e2.13}
%\end{equation}
%Later in the paper, the parameters $\varepsilon$, $\delta$ 
%will be choosen arbitrarly small. 
Suppose that $(Y,g)$ and  $(X,g_0)$ are closed riemannian manifolds and that
$$ f: Y \ra X\,,$$
is a continuous map of degree one.  For the sake of simplicity, we assume that $g_0$ is
hyperbolic (the construction holds in a much more general situation).
 Then, for any $ c > h(g) $ there exists  a $C^1$ map 
 $$F_{c}:Y\longrightarrow X\,,$$
 homotopic to $f$, such that for all $y\in Y$,
\begin{equation}
 |\jac F_{c}(y)| \leq \left(\frac{c}{h(g_{0})}\right)^n \,,\label{basic-inequality}
\end{equation} 
with equality for some $y \in Y$ if and only if $d_{y}F_{c}$ is an homothety of ratio $\frac{c}{h(g_{0})}$. 

Inequality (\ref{ineq-BCG}) is then easily obtained by integration of (\ref{basic-inequality})  and by taking a limit when $c$ goes to $h(g)$. To obtain global 
rigidity properties, one has in general to study carefully the behaviour of 
$F_{c}$ as $c$ goes to $h(g)$.

The construction of the maps is divided in four steps. 
Let $\tY$ and $\tX$ be the universal coverings of $Y$ and $X$ respectively, 
and $\tf: \tY \ra \tX$ a lift of $f$. 

%Let $c$ a parameter,   
%\begin{equation}
% h(g) < c \leq h(g) + \delta   \label{e2.13}
%\end{equation}
%with a small $\delta>0$. 
{\bf Step 1:} 
For each $y\in \tY$ and $c>h(g)$, let $\nu_{y}^c$ be the finite measure on $\tY$ defined by  
$$ d \nu_{y}^c(z) = e^{-c.\rho(y,z)}\dvgt(z)$$
where $z \in \tY$, $\tg$ is the lifted metric on $\tY$ and $\rho(.,.)$ is the 
distance function of $(\tY,\tg)$.
 
{\bf Step 2:}  
Fushing forward this measure gives a finite measure $\tf_{*}\nu_{y}^c$ on $\tX$. 
Let us recall that it is defined by 
$$ \tf_{*}\nu_{y}^c(U) = \nu_{y}^c(\tf^{-1}(U)).$$

{\bf Step 3:} One defines a finite measure $\mu_y^c$ on $\px$  by convolution of $ \tf_{*}\nu_{y}$ with all visual probability measures $P_{x}$ of $\tX$. Recall that the visual probability measure $P_x$ at $x \in \tX$ is defined as follows: the unit tangent sphere at $x$ noted $U_{x}\tX$ projects onto the geometric boundary $\px$ by the map 
$$ v \in U_{x}\tX \overset{E_{x}}{\longrightarrow} \gamma_{v}(\infty) \in \px,$$
where $\gamma_{v}(t)=exp_{x}(tv)$. The measure $P_{x}$ is then the push-forward by $E_{x}$ of the canonical probability measure on $U_{x}\tX$, \textsl{i.e.}, for a Borel set $A\in \px$,  $P_{x}(A)$ is the measure of the set of vectors $v \in U_{x}\tX$ such that 
$\gamma_{v}(+\infty)\in A$. 
 
 Then 
\begin{eqnarray*}
\mu_y^c(A) &=& \int_{\tX} P_{x}(A) \ d \tf_{*}\nu_{y}^c(x)\\
        & = & \int_{\tY} P_{\tf(z)}(A)\ d \nu_{y}^c(z).
\end{eqnarray*}
One can identifies $\px$ with the unit sphere in $\textbf{R}^n$, by choosing an origin $o\in \tX$ and using $E_{0}$.
The density of this measure is given by  (see \cite{BCG})
$$ \dmut = 
\left(\int_{\tY} e^{-h(g_{0})B(\tf(z),\theta)}e^{-c\rho(y,z)}\dvgt(z)\right) d\theta,$$
where $\theta\in\px$, $d\theta$ is the canonical probability measure on $S^{n-1}$ and $B(. ,\theta)$ is a Busemann function on $\tX$ normalised to vanish at $x=o$.
We will use the notation
$$ p(x,\theta)=e^{-h(g_{0})B(x,\theta)}.$$

{\bf Step 4: } The map $$ F_c : \tY \lra \tX $$ associates to any $y \in \tY$ the unique $x \in \tX$ which minimizes on $\tX$ the function $$x\to \cb(x) = \int_{\px} B(x,\theta)\ \dmut. $$
(see Appendix A in \cite{BCG}).

The  maps $F_c$ are shown to be $\mathcal{C}^1$ and  equivariant with respect to the actions of the fundamental groups of $Y$ and $X$ on their respective universal cover. The quotient maps, which are also   
 denoted by $F_c : Y \rightarrow X$, are homotopic to $f$.  Note that $F_{c}$ depends heavily on the metric $g$.

\subsection{Some technical lemmas}

Let us give some definitions. 

\begin{definition} 
For $y\in \tY$ let $\sigma_y^c$ be the probability measure on $\px$ defined by 
$$ \sigma_y^c = \frac{\mu_y^c}{\mu_y^c(\px)}.$$
\end{definition}
Let us remark that we have 
$$||\mu_y^c ||= \mu_y^c(\px) = \int_{\tY} e^{-c\rho(y,z)}\dvgt(z)
 =||\nu_y^c ||. $$ 
We consider two positive definite bilinear forms of trace equal to one and the corresponding symmetric endomorphisms. 
\begin{definition}
For any $y \in \tY$, $u,v \in T_{F_c(y)}\tX$, 
$$ \mathbf{h_y^c}(u,v) = \int_{\px} dB_{(F_c(y),\theta)}(u)dB_{(F_c(y),\theta)}(v)
\ \dsit = g_0(\mathbf{H_y^c}(u),v).$$
And, for any $y \in \tY$, $u,v \in T_y\tY$, 
$$ \mathbf{{h'_y}^c}(u,v) = \frac{1}{\mu_y^c(\px)}\int_\tY d\rho_{(y,z)}(u)d\rho_{(y,z)}(v)\ 
\dnu(z)= g(\mathbf{{H'_y}^c}(u),v).$$
\end{definition}

%These quadratic forms depends on the metric $g$.

%%%%%%%%%%%%%%%%   InÈgalitÈ sur (I-H)dF   %%%%%%%%%%%%%%%%

\begin{lemma}
\label{l2.1}
For any $y \in \tY$, $u \in T_y\tY$, $v \in T_{F(y)}\tX$, one has 
\begin{equation}
\left| g_0((I-H_y^c)d_yF_c(u),v)\right| 
\leq c \left( g_0(H_y^c(v),v)\right)^{1/2} \left(g({H'_y}^c(u),u)\right)^{1/2}\,.  \label{e2.4}
\end{equation}  
\end{lemma}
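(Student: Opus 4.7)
The plan is to differentiate in $y$ the implicit equation characterising $F_c(y)$ as a critical point, then combine a hyperbolic-space identity for the Hessian of $B$ with a Cauchy--Schwarz inequality applied to a suitable positive measure on $\tY\times\px$.

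By construction, $F_c(y)$ is the (unique) minimum of the strictly convex function $x\mapsto \cb_y(x)=\int_{\px} B(x,\theta)\,\dmut$, so the first-order condition reads
\begin{equation*}
\int_{\px} dB_{(F_c(y),\theta)}(v)\,\dmut = 0
\end{equation*}
for every $v\in T_{F_c(y)}\tX$. Differentiating this equation with respect to $y$ in direction $u$ yields two contributions: an "$x$-derivative" term involving the covariant Hessian $DdB$ evaluated at $(d_yF_c(u),v)$ and integrated against $\dmu$, and a "$y$-derivative" term in which the density of $\mu_y^c$ is differentiated. For the first term, the classical identity on real hyperbolic space
\begin{equation*}
DdB_{(x,\theta)}(w,w) = g_0(w,w) - \bigl(dB_{(x,\theta)}(w)\bigr)^2
\end{equation*}
(polarised in the two arguments) together with the definitions of $\mathbf{h_y^c}$ and $\sigma_y^c=\mu_y^c/\|\mu_y^c\|$ gives
\begin{equation*}
\int_{\px} DdB_{(F_c(y),\theta)}\bigl(d_yF_c(u),v\bigr)\,\dmut = \|\mu_y^c\|\, g_0\bigl((I-H_y^c)\,d_yF_c(u),\,v\bigr).
\end{equation*}

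For the second term, differentiating the density $\int_{\tY}p(\tf(z),\theta)\,e^{-c\rho(y,z)}\dvgt(z)$ of $\mu_y^c$ in $y$ (direction $u$) produces a factor $-c\,d\rho_{(y,z)}(u)$ inside the integral. Moving this derivative across and using Fubini, the second term equals
\begin{equation*}
c\int_{\tY\times\px} dB_{(F_c(y),\theta)}(v)\,d\rho_{(y,z)}(u)\,p(\tf(z),\theta)\,e^{-c\rho(y,z)}\,\dvgt(z)\,d\theta.
\end{equation*}
Now apply Cauchy--Schwarz with respect to the positive measure $p(\tf(z),\theta)\,e^{-c\rho(y,z)}\,\dvgt(z)\,d\theta$ on $\tY\times\px$. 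Integrating out $z$ first and recalling the definition of $\dmu$, the $(dB(v))^2$ factor yields $\|\mu_y^c\|\, g_0(H_y^c v,v)$. Integrating out $\theta$ first, and using that $p(x,\cdot)$ is the density of the probability measure $P_x$ (so $\int_{\px}p(x,\theta)\,d\theta=1$), the $(d\rho(u))^2$ factor yields $\int_{\tY}(d\rho_{(y,z)}(u))^2\dnu(z)=\|\mu_y^c\|\, g({H'}_y^c(u),u)$. Equating the two expressions coming from the differentiation of the critical-point equation, dividing by $\|\mu_y^c\|$ and taking absolute values gives exactly \eqref{e2.4}.

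The main technical obstacle is justifying the differentiation under the integral, since $\rho(y,\cdot)$ is only Lipschitz on $\tY$ and fails to be smooth on the diagonal and on cut loci. This is standard in the BCG framework: the cut locus of any fixed point has zero $\dvgt$-measure, $d\rho$ is defined almost everywhere with $|d\rho|\leq 1$, and the exponential weight $e^{-c\rho}$ provides the integrability required to differentiate under the integral. Once this is granted, the argument above is purely algebraic and the hyperbolic identity for $DdB$ is what makes the $(I-H_y^c)$ factor appear on the left-hand side.
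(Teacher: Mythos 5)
Your proposal is correct and follows essentially the same route as the paper: differentiate the critical-point identity $\int_{\px} dB_{(F_c(y),\theta)}(v)\,\dmut=0$ in the direction $u$, use the hyperbolic identity $DdB=g_0-dB\otimes dB$ to identify the first term with $\|\mu_y^c\|\,g_0((I-H_y^c)d_yF_c(u),v)$, and bound the density-derivative term by Cauchy--Schwarz so that the two factors become $\|\mu_y^c\|\,g_0(H_y^c v,v)$ and $\|\mu_y^c\|\,g({H'_y}^c u,u)$. Your single Cauchy--Schwarz on the product measure $p(\tf(z),\theta)e^{-c\rho(y,z)}\dvgt(z)\,d\theta$ is just a compressed version of the paper's two successive applications (first in $z$, then in $\theta$), and your remark on differentiating through the cut locus is a harmless added justification.
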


\begin{proof} 
Since $F_c(y)$ is an extremum of the function $\cb$, one has  
\begin{equation}
 d_{F_c(y)}\cb (v) = \int_{\px} dB_{(F_c(y),\theta)}(v)\  \dmut =0 \label{e2.3}
\end{equation}
for each $v \in T_{F_c(y)}\tX$. By differentiating this equation in a direction $u \in T_y\tY$, one obtains
\begin{multline*}
\int_{\px} DdB_{(F_c(y),\theta)}(d_yF_c(u),v) \dmut + \dots  \\
\dots +\int_{\px} dB_{(F_c(y),\theta)}(v)\left(\int_{\tY} 
p(\tf(z),\theta) (-cd\rho_{(y,z)}(u))\dnu(z)\right)\ d\theta = 0
\end{multline*}
Using Cauchy-Schwarz inequality in the second term, one gets
\begin{multline*}
 \left| \int_{\px} DdB_{(F_c(y),\theta)}(d_yF_c(u),v) \dmut \right| \leq    \\
 \int_{\px}| dB_{(F_c(y),\theta)}(v)| 
\left(\int_{\tY} p(\tf(z),\theta) \dnu(z)\right)^{1/2}
 \left(  \int_{\tY} p(\tf(z),\theta)|cd\rho_{(y,z)}(u)|^2  \dnu(z)\right)^{1/2}\ d\theta 
\end{multline*}
which is, using Cauchy-Schwarz inequality again 
\begin{eqnarray*}
 &\leq & c \left(\int_{\px}| dB_{(F_c(y),\theta)}(v)|^2 \int_{\tY} p(\tf(z),\theta) \dnu(z)d\theta \right)^{1/2}
\left(\int_{\px}\int_{\tY} p(\tf(z),\theta)|d\rho_{(y,z)}(u)|^2  \dnu(z)d\theta \right)^{1/2}\\
& = &c \left(\int_{\px}| dB_{(F_c(y),\theta)}(v)|^2 \dmut \right)^{1/2}\left( \int_{\tY} |d\rho_{(y,z)}(u)|^2  \ \dnu(z)\right)^{1/2}\\
& = & c \mu_{y}^c(\partial \tX)
\left(g_0(H_y^c(v),v)\right)^{1/2} \left(g({H'_y}^c(u),u)\right)^{1/2}
\end{eqnarray*}

It is shown in \cite[Chapter 5]{BCG}  that $DdB = g_0 - dB \otimes dB$ for an hyperbolic metric. The left term of the inequality is thus 
$\mu_{y}^c(\partial \tX)g_0((I-H_y^c)d_yF_c(u),v)$. This proves the lemma. 
\end{proof}

%%%%%%%%%%%%%%%%%%%%  Majoration du Jacobien %%%%%%%%%%%%%%%%%
\begin{definition}\label{lambda}
Let $0 < \lambda_1^c(y)\leq ...  \leq \lambda_n^c(y) < 1$ be the eigenvalues of $H_y^c$. 
\end{definition}

\begin{proposition}
\label{l2.2} There exists a constant $A:=A(n)>0$ such that, for any $y \in Y$,
\begin{equation}
\left| \jac F_c(y)\right| \leq \left(\frac{c}{h(g_0)}\right)^n 
\left(1-A \sum_{i=1}^n (\lambda_i^c(y)-\frac{1}{n})^2\right) \label{e2.5}
\end{equation}
\end{proposition}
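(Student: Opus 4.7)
The plan is to refine the derivation of the basic inequality (\ref{basic-inequality}) and extract a quantitative quadratic deficit whenever the spectrum of $H_y^c$ departs from the equi-distribution $\lambda_i=1/n$. Fix $y\in\tY$ and abbreviate $H=H_y^c$, $H'={H'_y}^c$, $S=(I-H)\circ d_yF_c$, $\lambda_i=\lambda_i^c(y)\in(0,1)$, and let $\lambda'_i$ denote the eigenvalues of $H'$; by construction $\tr H=\tr H'=1$, and $h(g_0)=n-1$.

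The first step reformulates (\ref{e2.4}) as an operator-norm estimate. Set $T:=H^{-1/2}\circ S\circ(H')^{-1/2}$; substituting $u=(H')^{-1/2}\tilde u$ and $v=H^{-1/2}\tilde v$ into (\ref{e2.4}) yields $|g_0(T\tilde u,\tilde v)|\leq c\,|\tilde u|_g|\tilde v|_{g_0}$, hence $\|T\|_{\mathrm{op}}\leq c$. All singular values of $T$ are then $\leq c$, so $|\det T|\leq c^n$. Expanding $\det T=\det(S)/\sqrt{\det H\,\det H'}$ and writing $d_yF_c=(I-H)^{-1}S$ gives
\[
|\jac F_c(y)|=\frac{|\det S|}{\det(I-H)}\leq c^n\,\frac{\sqrt{\det H\,\det H'}}{\det(I-H)}.
\]
The classical AM-GM inequality on the eigenvalues of $H'$ (using $\tr H'=1$) gives $\sqrt{\det H'}\leq n^{-n/2}$, so it remains to bound $\sqrt{\det H}/\det(I-H)$.

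The heart of the proof is the following algebraic inequality: for $n\geq 3$ there exists $A=A(n)>0$ such that, for every symmetric $H$ with $0<H<I$ and $\tr H=1$,
\[
\frac{\sqrt{\det H}}{\det(I-H)}\leq \frac{n^{n/2}}{(n-1)^n}\Bigl(1-A\sum_{i=1}^n(\lambda_i-\tfrac{1}{n})^2\Bigr).
\]
To prove it, consider $\phi(\lambda)=\tfrac{1}{2}\sum\log\lambda_i-\sum\log(1-\lambda_i)$ on the open simplex $\{\sum\lambda_i=1,\,0<\lambda_i<1\}$. The symmetric point $\lambda_i=1/n$ is an interior critical point at which $\phi=\log(n^{n/2}/(n-1)^n)$, with diagonal Hessian $n^2\bigl(-\tfrac{1}{2}+(n-1)^{-2}\bigr)I$. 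This is strictly negative exactly when $(n-1)^2>2$, i.e., $n\geq 3$, so the symmetric point is a strict local maximum with non-degenerate quadratic deficit. A direct inspection of the Lagrange equation $(2\lambda_i)^{-1}+(1-\lambda_i)^{-1}=\mu$ shows that any other interior critical point has its $\lambda_i$'s taking at most two distinct values; a short comparison together with the boundary analysis $\lambda_i\to 0$ or $\lambda_i\to 1$ — which for $n\geq 3$ yields limits strictly below $n^{n/2}/(n-1)^n$ — confirms that the symmetric point is the global maximum. Compactness and continuity then upgrade the local Taylor estimate to the global inequality displayed above. Combining the three displayed estimates and using $h(g_0)=n-1$ produces exactly (\ref{e2.5}).

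The main obstacle is this last algebraic step. The qualitative bound $\sqrt{\det H}/\det(I-H)\leq n^{n/2}/(n-1)^n$ is closely related to the BCG inequality (\ref{basic-inequality}); what is delicate is propagating a uniform quadratic deficit across the whole simplex. The hypothesis $n\geq 3$ enters twice — for the negative definiteness of the Hessian at the symmetric point, and for the strict decrease of the ratio at the boundary of the simplex — and a compactness-and-continuity argument is needed to glue the local Taylor estimate to the global behaviour.
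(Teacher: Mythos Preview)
Your proof is correct and follows the same two-step strategy as the paper: first bound $|\jac F_c|$ by $(c/\sqrt{n})^n\sqrt{\det H}/\det(I-H)$ via (\ref{e2.4}) and the trace condition on $H'$, then invoke the algebraic inequality on $\sqrt{\det H}/\det(I-H)$ over the simplex $\tr H=1$. Your operator-norm packaging of the first step is a clean variant of the paper's Gram--Schmidt/upper-triangular argument (Lemma~\ref{det-inequality}), and your Hessian-plus-compactness sketch of the second step is in fact more explicit than the paper, which simply defers to \cite[Appendix~B5]{BCG} for Lemma~\ref{linear-algebra}.
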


\begin{proof}

The proof is based on the two following lemmas.

\begin{lemma}\label{det-inequality} At each $y \in \tY$, 
        $$|\jac F_c(y)| \leq \left( \frac{c}{\sqrt{n}}\right)^n \frac{\det({H_y}^c)^{1/2}}{\det(I-{H_y}^c)}\,.$$
\end{lemma}
\begin{proof}[Proof of lemma \ref{det-inequality}] Let $\{ v_{i}\}$ be an orthonormal basis of $T_{F_{c}(y)}\tX$ which diagonalizes 
${H_{y}}^c$. We can assume 
that $d_{y}F_{c}$ is invertible otherwise the above inequality is obvious. Let $u'_{i}=\left[(I-{H_y}^c)\circ d_{y}F_{c}\right]^{-1}(v_{i})$. The Schmidt orthonormalisation process  applied to $(u'_{i})$ gives an orthonormal basis $(u_{i})$ at $T_{y}\tY$. The matrix of $ (I-{H_y}^c)\circ d_{y}F_{c}$ 
in the basis $(u_{i})$ and $(v_{i})$ is upper triangular, then
$$ \det(I-{H_y}^c) \jac F_{c}(y) = \prod_{i=1}^{n} g_{0}((I-{H_y}^c)\circ d_{y}F_{c}(u_{i}),v_{i})\,,$$
which gives, with (\ref{e2.4}), 
\begin{eqnarray*}
\det(I-{H_y}^c) |\jac F_{c}(y)| &\leq & c^n \left(\prod_{i=1}^{n} g_{0}({H_{y}}^c (v_{i}),v_{i})\right)^{1/2} 
 \left(\prod_{i=1}^n g({H'_y}^c(u_{i}),u_{i})\right)^{1/2} \\
 & \leq & c^n \det({H_y}^c)^{1/2} \left[\frac{1}{n} \sum_{i=1}^n g({H'_y}^c(u_{i}),u_{i})\right]^{n/2}\,,
 \end{eqnarray*}
 this proves the desired inequality since $\tr({H'_{y}}^c)=1$. 
\end{proof}
\begin{lemma}\label{linear-algebra} Let $H$ a symmetric positive definite $n\times n$ matrix whose trace is equal to one then, if $n\geq 3$, 
$$\frac{\det(H^{1/2})}{\det(I-H)} \leq  \left(\frac{n}{h(g_{0})^2}\right)^{n/2} \left(1-A \sum_{i=1}^n (\lambda_i-\frac{1}{n})^2\right)$$ 
for some positive constant $A(n)$.
\end{lemma}

\begin{proof}[Proof of lemma \ref{linear-algebra}] 
The proof is given in Appendix B5 of \cite{BCG}. 
This is the point where  the rigidity of the natural maps fails in dimension 2. This completes the proof of  proposition \ref{l2.2}. 
\end{proof}
\end{proof}

\subsection{Some nice properties}

We now show that when the 
volumes of $(Y,g)$ and $(X,g_{0})$ are close then
the natural maps $F_c$ have nice properties. In this section, we shall consider 
$F_c$ as a map from $(Y,g)$ to $(X,g_{0})$. We suppose that  the metric $g$ satisfies the curvature assumption (\ref{e0.1}) and the assumption on its volume (\ref{e0.2}) for some $\varepsilon>0$. Let us introduce some terminology. 

\begin{definition} Let $0 < \alpha < 1$. 
We say that a property holds \ae ($\alpha$-almost everywhere) on a set $A$ if the set $A_+$ 
of points of $A$ where the property holds has relative volume bigger or equal to
 $1-\alpha$, \textsl{i.e.} $\frac{\vol(A_+)}{\vol(A)} \geq 1-\alpha$. 
\end{definition}

 We show that $dF_c$ is $\alpha$-close to be isometric \ae on $Y$ for some 
positive $\alpha(\varepsilon,c)$. Moreover $\alpha(\varepsilon,c) \rightarrow 0 $ as $\varepsilon \rightarrow 0$ and $c \rightarrow h(g)$. On the other hand, given any radius $R>0$, one shows that $||dF_c||$ is uniformly 
bounded on  balls $B(y_g,R)$, provided $c$ is close enough to $h(g)$. Recall that we have a  lower  bound for the volume of $(Y,g)$ but we do not have an  upper bound for its diameter. The key point  
is to show that $H_y^c$ is $\alpha$-close to $\frac{1}{n}Id$ on a set of large volume, 
and is bounded on a ball of fixed radius, with respect to the parameters $\varepsilon, c$. 
 
To estimate from above $c-h(g)$ we introduce a parameter $\delta >0$. We suppose that the volume entropy of $g$ satisfies the  inequalities
 \begin{equation}
 h(g) < c \leq h(g) + \delta.    \label{e2.13}
\end{equation}
 
Observe that (\ref{e1.3}), (\ref{e2.5}) and (\ref{e2.13}) implies that 
\begin{equation}
|\jac F_c(y)| \leq \left(\frac{h(g)+\delta}{h(g_0)}\right)^n \leq 
\left(1+\frac{\delta}{n-1}\right)^{n-1}\,,
\end{equation}
for all $y \in Y$. The map $F_c$ is thus almost volume decreasing. 
On the other hand, as $\vol_g(Y)$ is close to $\vol_{g_0}(X)$, the set in $Y$ where $F_c$ decreases the
volume a lot must have a small measure. Equivalently, $|\jac F_c|$ must be close to 1 in $L^1$ norm. We now give a precise statement. 

\begin{lemma} 
\label{l2.3}
If $\delta$ is small enough, there exists $\alpha_{1}=\alpha_{1}(\varepsilon,\delta)>0$ such that $\alpha_1$-ae on $Y$ one has, 
\begin{equation}
1 - \alpha_{1}   \leq  |\jac F_c(y)|, \label{e2.7}
\end{equation}
 and for all $y \in Y$ one has 
\begin{equation}
|\jac F_c(y)|  \leq  1+\alpha_{1}.
\end{equation}
Moreover, $\alpha_{1}(\varepsilon,\delta) \ra 0$ as $\varepsilon$ and $\delta \ra 0$.  
\end{lemma}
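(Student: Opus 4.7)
The plan is to combine the pointwise upper bound on $|\jac F_c|$ coming from Proposition \ref{l2.2} with the integral identity coming from the fact that $F_c$ is homotopic to $f$, which has degree one. The upper and lower bounds in the statement are almost independent, and should be established separately.

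First I would handle the \emph{global upper bound}: combine (\ref{e1.3}), (\ref{e2.13}) and Proposition \ref{l2.2} to get, for every $y \in Y$,
$$|\jac F_c(y)| \leq \left(\tfrac{c}{h(g_0)}\right)^n \leq \left(1+\tfrac{\delta}{n-1}\right)^{n-1}=:M,$$
exactly as noted just above the statement of the lemma. Setting $M-1$ as the candidate size for the upper deviation, we already have $|\jac F_c(y)| \leq 1+\alpha_1$ as soon as $\alpha_1 \geq M-1$, a condition that forces $\alpha_1 \to 0$ at worst linearly in $\delta$.

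For the \emph{lower bound \ae}, the degree-one hypothesis yields
$$\vol_{g_0}(X) \;=\; |\deg(F_c)|\,\vol_{g_0}(X) \;=\; \Bigl|\int_Y F_c^{*}\dvgo\Bigr| \;\leq\; \int_Y |\jac F_c(y)|\,\dvg(y).$$
I would then split $Y = A_- \sqcup A_+$ where $A_- = \{y : |\jac F_c(y)| < 1-\alpha_1\}$, bound $|\jac F_c|$ by $M$ on $A_+$ and by $1-\alpha_1$ on $A_-$, and obtain
$$\vol_{g_0}(X) \leq M\,\vol_g(Y) - (M-1+\alpha_1)\,\vol_g(A_-).$$
Combining this with $\vol_{g_0}(X) \leq \vol_g(Y) \leq (1+\varepsilon)\vol_{g_0}(X)$ from (\ref{e1.2}) and (\ref{e0.2}), a short rearrangement gives
$$\frac{\vol_g(A_-)}{\vol_g(Y)} \;\leq\; \frac{(M-1)+M\varepsilon}{M-1+\alpha_1}.$$

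The final step is to choose $\alpha_1$ making both estimates consistent. Taking $\alpha_1(\varepsilon,\delta) := \sqrt{2\bigl((M-1)+M\varepsilon\bigr)}$ (which, for $\delta$ small, is automatically $\geq M-1$, so the pointwise upper bound survives) makes the right-hand side above $\leq \alpha_1$, so $\vol_g(A_-)/\vol_g(Y) \leq \alpha_1$, i.e.\ the lower bound holds $\alpha_1$-ae. Since $M-1 = O(\delta)$, we have $\alpha_1 = O\bigl(\sqrt{\delta+\varepsilon}\bigr) \to 0$ as $\delta,\varepsilon \to 0$. The only delicate point — really just a matter of bookkeeping rather than a genuine obstacle — is the balancing of $\alpha_1$ between a condition $\alpha_1 \gtrsim \delta$ forced by the uniform upper bound and a condition $\alpha_1^2 \gtrsim \delta+\varepsilon$ forced by the measure-of-$A_-$ estimate; the square-root choice reconciles the two.
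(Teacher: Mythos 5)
Your proposal is correct and follows essentially the same route as the paper: both use the degree-one identity $\vol_{g_0}(X)\leq\int_Y|\jac F_c|\,\dvg$, split $Y$ into the set where $|\jac F_c|$ is close to $1$ and its complement, bound the Jacobian pointwise by $\left(1+\frac{\delta}{n-1}\right)^{n-1}$, and use the volume pinching (\ref{e1.2})--(\ref{e0.2}) together with a square-root choice of $\alpha_1$ to make the bad set's relative volume at most $\alpha_1$. The only difference is bookkeeping (the paper takes the threshold $1-\alpha$ with $\alpha=\max\bigl(\sqrt{M-1},\sqrt{\varepsilon}\bigr)$ and sets $\alpha_1=2\alpha$, while you take $\alpha_1=\sqrt{2((M-1)+M\varepsilon)}$ directly), which is immaterial.
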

  
\begin{proof} 
Let 
$$\alpha = \max \left( \sqrt{\left(1+\frac{\delta}{n-1}\right)^{n-1}-1},
\sqrt{\varepsilon}\right ).$$ 
Thus $\left(1+\frac{\delta}{n-1}\right)^{n-1} \leq 1 + \alpha^2$ and $\varepsilon \leq \alpha^2$.
In particular, $|\jac F_c(y)| \leq 1+\alpha^2 \leq 1 + \alpha$ for all $y \in Y$, if $\delta$ is small enough so that $\alpha$ is less than $1$ (we also assume that $\varepsilon$ is small). 
 
%We want to use the 
%aira-formula (\cite{Mor}{3.7}), 
%$$ \int_Y |\jac F_c(y)|\ dV_g(y) = \int_X N(F_c,x)\ dV_{g_0}(x)$$
%where $ N(F|x)$ is the cardinal of $\{ F_c^{-1}(x),x \in X\}$. 
 
As $F_c$ has degree one, we have 
$$\vol_{g_0}(X) = \int_Y F_c^*(\dvgo ) = \int_Y \jac F_c (y)\dvg(y)$$
Denote by $Y_{\alpha_1}$ the 
set of points $y \in Y$ such that 
$$  |\jac F_c(y)| \geq 1-\alpha. $$
We have
\begin{eqnarray}
\vol_{g_0}(X) & \leq & \int_Y |\jac F_c(y)|\dvg(y)\\
            & = & \int_{Y_{\alpha_1}} |\jac F_c(y)|\dvg(y) +
\int_{Y\setminus Y_{\alpha_1}} |\jac F_c(y)|\dvg(y)\\
       & \leq &(1 + \alpha^2)\vol_g(Y_{\alpha_1}) + (1-\alpha)\vol_g(Y\setminus Y_{\alpha_1})\\
       & = & \vol_g(Y) + \alpha^2 \vol_g(Y_{\alpha_1})-\alpha\vol_g(Y\setminus Y_{\alpha_1})
\end{eqnarray}
Then, using the assumption (\ref{e0.2}) and the inequality (\ref{e1.2}) on the volume, we get
\begin{eqnarray}
 \vol_g(Y\setminus Y_{\alpha_1})& \leq &\frac{\vol_g(Y) - \vol_{g_0}(X)}{\alpha} +
   \alpha \vol_g(Y_{\alpha_1})\\
   & \leq &\left( \frac{\varepsilon}{\alpha} + \alpha \right)\vol_g(Y)\\
   & \leq & 2\alpha\vol_g(Y).
 \end{eqnarray}
 Clearly, $1-2\alpha \leq |\jac F_c(y)| $ on $Y_{\alpha_1}$ and 
 $|\jac F_c(y)|  \leq 1 + 2\alpha$ on $Y$ which proves 
 the lemma with $\alpha_{1}(\varepsilon,\delta)=2\alpha$. 
 \end{proof}

%\begin{remark} The right inequality can be improved in 
%$|\jac F_c(y)| \leq 1$, with a little perturbation on  
%(\ref{e2.2}). It suffices to have $h(g) < h_0$ and $\delta>0$ such 
%that $h(g)+\delta \leq h_0$. 
%This can be done 
%with a little dilatation of $g$ by a constant $\lambda>1$, wich strictly 
%decreases the entropy. The 
%bound (\ref{e2.1}) is preserved and the bound (\ref{e2.2}) is 
%relaxed in $\vol_g(Y) \leq (\lambda)^{n/2}(1+\varepsilon)\vol_{g_0}$. 
%With our hypothesis on existence of $g$ for any $\varepsilon$, this changes 
%nothing. 
%\end{remark}

%%%%%%%%%%%%%%%%%%%%% N(F,x) = 1 alpha pp%%%%%%%%%%%%%%%%%%%%

From this lemma, we deduce that $F_c$ is almost injective. 
Indeed, let $x \in X$, one defines $N(F_c,x) \in \NN \cup \{\infty\}$ to be the number 
of preimages of $x$ by $F_c$. As $F_c$ has degree one, one has 
$N(F_c,x) \geq 1$ for all $x \in X$. We then define $X_1 := \{ x \in X, N(F_c, x)=1\}$. 
Observe that $N(F_c,x) \geq 2$ on $X\setminus X_1$. 

\begin{lemma}
\label{l2.7}  There exists $\alpha_{2}=\alpha_{2}(\varepsilon,\delta)>0$ such that 
\begin{equation}
\vol_{g_0}(X_1) \geq (1-\alpha_{2})\vol_{g_0}(X)
\end{equation}
and 
\begin{equation}
\int_{X\setminus X_1} N(F_c,x)\ \dvgo(x) \leq \alpha_{2}(\varepsilon,\delta) \vol_{g_0}(X)\,.
\end{equation}
Moreover, $\alpha_{2}(\varepsilon,\delta) \ra 0$ as $\varepsilon$ and $\delta \ra 0$.
\end{lemma}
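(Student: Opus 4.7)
The plan is to combine the area formula for $\mathcal{C}^1$ maps between compact equidimensional manifolds with the pointwise upper bound on $|\jac F_c|$ from Lemma~\ref{l2.3} and the volume hypothesis~(\ref{e0.2}); once the area formula is in place, the argument is purely algebraic bookkeeping on two estimates of the same integral.

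First, I would apply the area formula to the $\mathcal{C}^1$ map $F_c:Y\to X$ to obtain
$$\int_Y |\jac F_c|(y)\, \dvg(y) \;=\; \int_X N(F_c,x)\, \dvgo(x).$$
Splitting the right-hand side along $X_1$ and $X\setminus X_1$, and using $N(F_c,\cdot)\equiv 1$ on $X_1$ together with $N(F_c,\cdot)\geq 2$ on $X\setminus X_1$, this gives the lower bound
$$\int_Y |\jac F_c|\, \dvg \;\geq\; \vol_{g_0}(X_1) + 2\vol_{g_0}(X\setminus X_1) \;=\; \vol_{g_0}(X) + \vol_{g_0}(X\setminus X_1).$$
On the other hand, Lemma~\ref{l2.3} yields $|\jac F_c|\leq 1+\alpha_1$ everywhere on $Y$, so together with~(\ref{e0.2})
$$\int_Y |\jac F_c|\, \dvg \;\leq\; (1+\alpha_1)(1+\varepsilon)\, \vol_{g_0}(X).$$
Subtracting these two inequalities produces
$$\vol_{g_0}(X\setminus X_1) \;\leq\; \bigl[(1+\alpha_1)(1+\varepsilon)-1\bigr]\, \vol_{g_0}(X),$$
so the first asserted inequality holds with $\alpha_2(\varepsilon,\delta):=2\bigl[(1+\alpha_1(\varepsilon,\delta))(1+\varepsilon)-1\bigr]$, which tends to $0$ as $\varepsilon,\delta\to 0$ since $\alpha_1$ does.

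For the second inequality, I would rewrite the area formula as
$$\int_{X\setminus X_1} N(F_c,x)\, \dvgo(x) \;=\; \int_Y |\jac F_c|\, \dvg \;-\; \vol_{g_0}(X_1),$$
and bound the right-hand side from above using the upper estimate on $\int_Y |\jac F_c|\, \dvg$ together with the lower bound $\vol_{g_0}(X_1)\geq (1-\alpha_2/2)\,\vol_{g_0}(X)$ just obtained; a short computation then returns exactly $\alpha_2\, \vol_{g_0}(X)$.

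The only non-algebraic input is the area formula itself, which is the classical result for Lipschitz (a fortiori $\mathcal{C}^1$) maps between Riemannian manifolds of the same dimension; I anticipate no genuine obstacle, the lemma being essentially a compactness-free accounting around this identity, quantifying how the equality case in the degree formula $\int_Y\jac F_c\,\dvg = \vol_{g_0}(X)$ forces $F_c$ to be almost injective.
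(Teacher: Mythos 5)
Your proposal is correct and follows essentially the same route as the paper: the area formula $\int_Y|\jac F_c|\,\dvg=\int_X N(F_c,x)\,\dvgo$, the pointwise Jacobian upper bound, and the volume hypothesis (\ref{e0.2}), with only algebraic bookkeeping. The two small deviations --- routing the upper bound through the bound $|\jac F_c|\leq 1+\alpha_1$ of Lemma \ref{l2.3} (so implicitly needing $\varepsilon,\delta$ small) instead of $(c/h(g_0))^n\leq(1+\tfrac{\delta}{n-1})^n$ directly, and obtaining the second inequality from the identity $\int_{X\setminus X_1}N=\int_Y|\jac F_c|\,\dvg-\vol_{g_0}(X_1)$ rather than from $N\leq 2(N-1)$ on $X\setminus X_1$ --- are cosmetic and yield the same conclusion.
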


In particular, there exists $\alpha'>0$ such that $N(F_c,x)=1$ $\alpha'$-ae on $X$. 

\begin{proof} One defines 
$$\alpha_{2}(\varepsilon,\delta)= 2\left(\left(1+\frac{\delta}{n-1}\right)^n(1+\varepsilon) -1\right).$$
From (\ref{e2.5}) and the area formula (see \cite[3.7]{Mor}), we have 
\begin{eqnarray}
\left(\frac{c}{h(g_0)}\right)^n\vol_g(Y)& \geq & \int_Y |\jac F_c(y)|\ \dvg(y)\\
& = & \int_X N(F_c,x)\ \dvgo(x)\\
& = & \int_{X_1} N(F_c,x)\ \dvgo(x)+\int_{X\setminus X_1} (N(F_c,x)-1+1)\dvgo(x)\\
& = & \vol_{g_0}(X) + \int_{X\setminus X_1}( N(F_c,x)-1)\dvgo(x).
\end{eqnarray}
And
\begin{eqnarray}
\vol_{g_0}(X\setminus X_1) & \leq &  \int_{X\setminus X_1} (N(F_c,x)-1) \dvgo(x)\\
& \leq & \left(\frac{c}{h(g_0)}\right)^n  \vol_g(Y) - \vol_{g_0}(X)\\
& \leq & \left(\left(\frac{c}{h(g_0)}\right)^n (1+\varepsilon)-1\right) \vol_{g_0}(X)\\
& \leq & \frac{\alpha_{2}(\varepsilon,\delta)}{2}\vol_{g_0}(X).
\end{eqnarray}
Thus, since $N(F_c,x)\leq 2(N(F_c,x)-1)$ on $X\setminus X_1$, we get
$$ \vol_{g_{0}}(X\setminus X_{1}) \leq \int_{X\setminus X_1} N(F_c,x)\ \dvgo(x) \leq 
 \alpha_{2}(\varepsilon,\delta)\vol_{g_0}(X),$$ 
 and this proves the lemma. 
 \end{proof}

%%%%%%%%%%%%%%%%%%%% H proche Id/n et dF proche isomÈtrie%%%%%%%%%

The following lemma says that $dF_c(y)$ is almost isometric 
at points $y$ where $\jac F_c(y)$ is almost equal to $1$.
\begin{lemma} 
\label{l2.4}
There exists $\alpha_3=\alpha_3(\varepsilon,\delta) > 0$ such that the following holds. 
Let $Y_{\alpha_{1}}$ be the set of points where (\ref{e2.7}) holds, that is
$1-\alpha _1(\varepsilon,\delta) \leq |\jac F_c(y)|$. Let $y$ be a point in
$Y_{\alpha_{1}}$ and $u \in T_yY$, then 
\begin{equation}(1-\alpha_3)\Ar u \Ar_g \leq \Ar d_yF_c(u) \Ar_{g_0} 
\leq (1+\alpha_3) \Ar u \Ar_g \,. \label{e2.8}
\end{equation}
Moreover, $\alpha_3(\varepsilon,\delta) \ra 0$ as $\varepsilon$, $\delta \ra 0$.
\end{lemma}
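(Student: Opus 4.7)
The plan is to combine Proposition \ref{l2.2} with the hypothesis on $|\jac F_c(y)|$ to force $H_y^c$ to be close to $\tfrac{1}{n}I$, then to exploit Lemma \ref{l2.1} to control the singular values of $d_y F_c$.

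First I would convert the hypothesis $1-\alpha_1 \leq |\jac F_c(y)|$ into an estimate on the eigenvalues of $H_y^c$. Inserting the bound of Proposition \ref{l2.2} and using $h(g) \leq n-1 = h(g_0)$ together with $c\leq h(g)+\delta$, one obtains
$$1-\alpha_1 \leq |\jac F_c(y)| \leq \left(1+\tfrac{\delta}{n-1}\right)^n \left(1-A\sum_{i=1}^n (\lambda_i^c(y)-\tfrac{1}{n})^2\right),$$
from which $\sum_i(\lambda_i^c(y)-1/n)^2 = O(\alpha_1+\delta)$. Hence each $\lambda_i^c(y)$ lies within $O(\sqrt{\alpha_1+\delta})$ of $1/n$, so $H_y^c$ and $I-H_y^c$ are close to $(1/n)I$ and $\tfrac{n-1}{n}I$ respectively in operator norm.

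Next I would apply Lemma \ref{l2.1} with the test vector $v=d_yF_c(u)$. The previous step bounds $g_0((I-H_y^c)d_yF_c(u),d_yF_c(u))$ from below by $(1-\lambda_n^c(y))\|d_yF_c(u)\|_{g_0}^2$ and $g_0(H_y^c d_yF_c(u),d_yF_c(u))$ from above by $\lambda_n^c(y)\|d_yF_c(u)\|_{g_0}^2$. Dividing through and squaring yields
$$\|d_yF_c(u)\|_{g_0}^2 \leq \frac{c^2\,\lambda_n^c(y)}{(1-\lambda_n^c(y))^2}\,g({H'_y}^c u,u) \leq n(1+\eta)\,g({H'_y}^c u,u),$$
for some $\eta=\eta(\varepsilon,\delta)\to 0$ as $\varepsilon,\delta\to 0$, where the last inequality uses Step~1 and $c\le n-1+\delta$. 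Summing over a $g$-orthonormal basis of $T_yY$ and invoking $\tr {H'_y}^c = 1$ gives $\sum_i \sigma_i^2 \leq n(1+\eta)$, where $\sigma_1,\dots,\sigma_n$ are the singular values of $d_yF_c$.

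Finally I would combine this upper bound on $\sum_i\sigma_i^2$ with the lower bound $\prod_i\sigma_i^2 = (\jac F_c(y))^2 \geq (1-\alpha_1)^2$ via a quantitative AM-GM argument. Standard AM-GM gives $\sum_i\sigma_i^2 \geq n\bigl(\prod_i\sigma_i^2\bigr)^{1/n}\geq n-O(\alpha_1)$, so the mean of $\sigma_i^2$ lies in $[1-O(\alpha_1),\,1+\eta]$. A defect version of AM-GM then controls the variance, $\sum_{i<j}(\sigma_i^2-\sigma_j^2)^2 = O(\eta+\alpha_1)$, and combined with the estimate on the mean this forces each $\sigma_i^2$ within $O(\sqrt{\eta+\alpha_1})$ of $1$. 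Equivalently, each $\sigma_i$ lies within some $\alpha_3=\alpha_3(\varepsilon,\delta)\to 0$ of $1$, which is exactly (\ref{e2.8}).

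The main obstacle is the last step: extracting pointwise control on the singular values from the two aggregate constraints on their sum and product requires a quantitative (defect) form of AM-GM. Once this is in hand the argument is routine, but some care is needed to make sure that the resulting $\alpha_3$ depends only on $\varepsilon$ and $\delta$ and tends to zero as both parameters do.
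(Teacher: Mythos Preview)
Your argument is correct and follows the same strategy as the paper's proof: use Proposition~\ref{l2.2} to pin $H_y^c$ near $\tfrac{1}{n}I$, feed this into Lemma~\ref{l2.1} with $v=d_yF_c(u)$ to bound $\tr L = \sum_i\|d_yF_c(u_i)\|_{g_0}^2$ by $n(1+o(1))$, and then invoke near-equality in the arithmetic--geometric mean inequality (together with $\det L = |\jac F_c(y)|^2 \geq (1-\alpha_1)^2$) to force $L$ close to the identity. The only cosmetic difference is that the paper sums first and applies Cauchy--Schwarz to the resulting sum, whereas you obtain the pointwise bound $\|d_yF_c(u)\|_{g_0}^2 \leq n(1+\eta)\,g({H'_y}^c u,u)$ and then sum; your route is in fact slightly more direct.
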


\begin{proof} 
The inequality (\ref{e2.5}) implies that  for all $y \in Y$
$$ \parallel  H_y^c - \frac{1}{n}\mathrm{Id} \Ar ^2 \leq \frac{1}{A}
\left( 1 - \frac{|\jac F_c(y)|}{\left( 1 +\frac{\delta}{n-1}\right)^n } \right).$$
Let us define 
\begin{equation}
\beta_{1}=\beta_{1}(\varepsilon,\delta) = \frac{1}{A^{1/2}}
\left(1 -\frac{1-\alpha_{1}(\varepsilon,\delta)}{\left( 1 +\frac{\delta}{n-1}\right)^n } \right)^{1/2}. 
\label{beta_1}
\end{equation}

where $\alpha_{1}(\varepsilon,\delta)$ is the constant from Lemma \ref{l2.3}. 
Clearly, $\beta_{1}(\varepsilon,\delta) \rightarrow 0$ as $\varepsilon$ and $\delta \ra 0$. 
Let $Y_{\alpha_1}$ be the set of points where (\ref{e2.7}) holds. On $Y_{\alpha_1}$, 
one has 
\begin{equation}
\parallel  H_y^c - \frac{\mathrm{Id}}{n} \Ar ^2 \leq {\beta_{1}}^2. \label{HId}
\end{equation}
Let  $\{ u_i\}_{i=1,\dots,n}$ be an orthonormal basis 
of $T_yY$ and $v_i = d_yF(u_i)$. Writing 
$\mathrm{Id}-H_y^c=\frac{n-1}{n}\mathrm{Id} 
+ \frac{1}{n}\mathrm{Id} - H_y^c$, one gets 
\begin{align}
\begin{split}
\left| g_0\left((\mathrm{Id}-H_y^c)d_yF_c(u_i),d_yF_c(u_i)\right)\right| \geq &
 \left|g_0\left((\frac{n-1}{n}\mathrm{Id})d_yF_c(u_i),d_yF_c(u_i)\right)\right| \\
 & - 
\left| g_0\left((\frac{1}{n}\mathrm{Id} - H_y^c) d_yF_c(u_i),d_yF_c(u_i)\right)\right| 
\end{split}\\
 & \geq  \frac{n-1}{n} ||d_yF_c(u_i)||_{g_0}^2 - \parallel \frac{1}{n}\mathrm{Id}-H_y^c|| .||d_yF_c(u_i)||_{g_0}^2\\
 &\geq  \left(\frac{n-1}{n}-\beta_1\right)||d_yF_c(u_i)||_{g_0}^2 \label{left}\,.
\end{align} 
Writing $H_y^c= \frac{1}{n}\mathrm{Id} + H_y^c - \frac{1}{n}\mathrm{Id} $, 
one has 
\begin{align}
\begin{split}
 g_0\left(H_y^cd_yF_c(u_i),d_yF_c(u_i)\right)^{1/2} \leq\ & 
 g_0\left((\frac{1}{n}\mathrm{Id})d_yF_c(u_i),d_yF_c(u_i)\right)^{1/2}  \\ 
& + \left| g_0\left((H_y^c - \frac{1}{n}\mathrm{Id}) d_yF_c(u_i),d_yF_c(u_i)\right)\right|^{1/2}
\end{split}\\
& \leq \left(\frac{1}{\sqrt{n}} + \beta_1^{1/2}\right) ||d_yF_c(u_i)||_{g_0}\,.
\end{align}

Taking the trace of the right hand side of (\ref{e2.4}) and using 
the Cauchy-Schwarz inequality, one has 
\begin{align}
\begin{split}
 \sum_{i=1}^n\ g_0\left(H_y^cd_yF_c(u_i),d_yF_c(u_i)\right)^{1/2} g({H'_y}^c(u_i),u_i)^{1/2} &\leq  
\left(\frac{1}{\sqrt{n}} + \beta_1^{1/2}\right) \left(\sum_{i=1}^n ||d_yF_c(u_i)||_{g_0}^2 \right)^{1/2} \\
& \times \left(\sum_{i=1}^n g({H'_y}^c(u_i),u_i) \right)^{1/2}
\end{split}\\
& = \left(\frac{1}{\sqrt{n}} + \beta_1^{1/2}\right) \left(\sum_{i=1}^n ||d_yF_c(u_i)||_{g_0}^2 \right)^{1/2} \label{right}
\end{align}

By (\ref{e2.4}), the trace of (\ref{left}) is not greater than the right hand side of (\ref{right}) multiplied by $c$,  hence 
$$ \left(\frac{n-1}{n}-\beta_1 \right) \sum_{i=1}^n ||d_yF_c(u_i)||_{g_0}^2 
\leq c \left(\frac{1}{\sqrt{n}} + \beta_1^{1/2}\right) \left(\sum_{i=1}^n ||d_yF_c(u_i)||_{g_0}^2 \right)^{1/2}\,,$$
and
$$
\left(\sum_{i=1}^n ||d_yF_c(u_i)||_{g_0}^2 \right)^{1/2} \leq 
c \frac{\frac{1}{\sqrt{n}} + \beta_1^{1/2}}{\frac{n-1}{n}-\beta_1} \leq
\sqrt{n}(1+\frac{\delta}{n-1}) \frac{1 + \sqrt{n}\beta_1^{1/2}}{1 -\frac{n}{n-1}\beta_1}.
$$
Let us define 
$$ \beta_2:=\beta_2(\varepsilon,\delta)= (1+\frac{\delta}{n-1})^2\left( \frac{1 + \sqrt{n}\beta_1^{1/2}}{1 -\frac{n}{n-1}\beta_1}\right)^2-1.$$
Clearly, $\beta_2(\varepsilon,\delta) \ra 0$ as $\varepsilon$ and $\delta \ra 0$. One has 
 $$\sum_{i=1}^n ||d_yF_c(u_i)||_{g_0}^2 \leq n (1 + \beta_2). $$
 
Let $L$ be the endomorphism of $T_yY$ defined by $L = (d_yF_c)^* \circ d_yF_c$. We have 
\begin{equation}
\tr(L) = \sum_{i=1}^n g(L(u_i),u_i)  =  \sum_{i=1}^n g(d_yF_c(u_i),d_{y}F_{c}(u_i))
				 \leq   n (1 + \beta_2).
\end{equation}
On the other hand 
\begin{equation*}
|1-\alpha|^2 \leq |\jac F_c(y)|^2  =  \det(L) \leq \left(\frac{\tr(L)}{n}\right)^n
	   \leq  (1+\beta_2)^n,
\end{equation*}
which shows that there is almost equality in the  arithmetico-geometric inequality. We then get that there exists  some 
$\alpha_3(\varepsilon,\delta) >0$, with $\alpha_3(\varepsilon,\delta)\ra 0$ as  
$\varepsilon,\delta \ra 0$, such that 
$$ ||L - Id|| \leq \alpha_3(\varepsilon,\delta).$$
Thus for any $y \in Y_{\alpha_1}$ and $u \in T_yY$
\begin{equation}
 (1-\alpha_3)\Ar u \Ar \leq \Ar d_yF_c(u) \Ar_{g_0}\leq (1+\alpha_3)\Ar u \Ar
\end{equation}
and $d_yF_c$ is almost isometric. 
\end{proof}

%%%%%%%%%%%%%%%%%%%%%%%%%%%% controle uniforme dF%%%%%%%%%%%%%%%%%%%%%

We now prove that given a fixed radius $R>0$, the natural maps $F_{c}$ have uniformly bounded 
differential $dF_{c}$ on 
$B(y_{g},R)$ if the parameters $\varepsilon$, $\delta$ are sufficiently small. Recall that the point
$y_g$ has been chosen such that (\ref{e1.1}) holds, namely $\vol_g(B(y_g, 1)) \geq v_n$.

\begin{lemma} 
\label{l2.5} Let $R>0$, then  there exist  $\varepsilon(R)>0$ and $\delta(R)>0$ such that for any 
$0<\varepsilon<\varepsilon(R)$ and $0<\delta < \delta(R)$, and for any $y \in B(y_g,R)$, 
\begin{equation}
\Ar d_yF_c \Ar \leq 2\sqrt{n}   \label{e2.10}
\end{equation}
\end{lemma}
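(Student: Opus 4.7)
The pointwise bound emerges from Lemma~\ref{l2.1}: setting $v = d_yF_c(u)$ in~(\ref{e2.4}) and using $g_0(H_y^c v,v) \leq \lambda_n^c(y)\|v\|_{g_0}^2$ together with $g({H'_y}^c u, u) \leq \|u\|_g^2$ (the latter from $\tr {H'_y}^c = 1$), one obtains
$$\|d_y F_c\| \leq \frac{c\,\sqrt{\lambda_n^c(y)}}{1-\lambda_n^c(y)}.$$
Since $c \leq h(g_0) + \delta = n-1+\delta$ by~(\ref{e1.3}) and~(\ref{e2.13}), and since the right-hand side equals $c\sqrt{n}/(n-1) < 2\sqrt n$ when $\lambda_n^c(y) = 1/n$, a continuity argument produces constants $\eta(n), \delta_0(n) > 0$ such that $\delta \leq \delta_0(n)$ together with $\lambda_n^c(y) \leq \tfrac{1}{n} + \eta(n)$ imply $\|d_y F_c\| \leq 2\sqrt n$. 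Proposition~\ref{l2.2} in turn guarantees this last condition as soon as $|\jac F_c(y)| \geq \left(c/h(g_0)\right)^n (1 - A\,\eta(n)^2)$ pointwise on $B(y_g, R)$, with $A=A(n)$ from~(\ref{e2.5}).

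The plan to produce this pointwise Jacobian lower bound combines three inputs: (i) the integral control
$$\int_Y \bigl(1 - |\jac F_c(y)|/(c/h(g_0))^n\bigr)\, \dvg(y) \leq C(\varepsilon,\delta)\,\vol_{g_0}(X),$$
extracted from Lemma~\ref{l2.3} and Proposition~\ref{l2.2}, with $C(\varepsilon,\delta) \to 0$; (ii) the volume lower bound $\vol_g(B(y_g,R)) \geq v_n(R) > 0$ coming from~(\ref{loc-vol}); and (iii) a quantitative modulus of continuity for $y \mapsto |\jac F_c(y)|$, drawn from the explicit integral formula for $\mu_y^c$ via the two-sided comparison $e^{-c\rho(y,y')}\,d\mu_{y'}^c \leq d\mu_y^c \leq e^{c\rho(y,y')}\,d\mu_{y'}^c$, and analogous estimates for ${H'_y}^c$ and (after parallel transport along the shortest geodesic in $\tX$) for $H_y^c$. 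If the Jacobian lower bound were to fail at some $y_0 \in B(y_g,R)$, the modulus of continuity would force the same defect on a neighborhood of $y_0$ of definite $g$-volume, contradicting (i) once $\varepsilon$ and $\delta$ are small enough in terms of $R$.

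The principal obstacle is making the modulus of continuity in (iii) uniform: since $H_y^c$ acts on $T_{F_c(y)}\tX$, comparing $H_y^c$ and $H_{y'}^c$ requires controlling $d_{g_0}(F_c(y),F_c(y'))$, i.e.\ the Lipschitz constant of $F_c$ on $B(y_g,R)$, which is the very quantity we wish to bound. The resolution is a bootstrap. First, Proposition~\ref{l2.2} together with the trivial lower bound $|\jac F_c| \geq 0$ yields the \emph{uniform} estimate $\|H_y^c - \tfrac{1}{n}\mathrm{Id}\| \leq 1/\sqrt{A}$ which, when coupled with the pointwise inequality above, gives a preliminary but coarse upper bound on $\|d_y F_c\|$ valid on all of $Y$. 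Second, this preliminary bound yields an a priori displacement estimate for $F_c$ on $B(y_g,R)$, making the comparison of $H_y^c$ at nearby points quantitative. Re-inserting this quantitative modulus of continuity into the argument of the previous paragraph closes the loop and produces the sharp bound $2\sqrt n$ for $\varepsilon,\delta$ sufficiently small depending on $R$.
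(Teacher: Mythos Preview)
Your overall strategy---bound $\|d_yF_c\|$ in terms of $\lambda_n^c(y)$, show $\lambda_n^c$ is close to $1/n$ on a set of large volume, then propagate by continuity---matches the paper. The gap is in the bootstrap.

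You claim that Proposition~\ref{l2.2} with the trivial bound $|\jac F_c|\geq 0$ gives $\|H_y^c-\tfrac{1}{n}\mathrm{Id}\|\leq 1/\sqrt{A}$, and that this yields a preliminary uniform bound on $\|d_yF_c\|$. The first part is correct, but the second fails: the constant $A=A(n)$ in Proposition~\ref{l2.2} is \emph{not} large enough to force $\lambda_n^c(y)\leq \tfrac{1}{n}+\tfrac{1}{\sqrt A}<1$. Indeed, take $n=3$ and $H$ with eigenvalues $(\epsilon,\epsilon,1-2\epsilon)$; as $\epsilon\to 0$ one has $\det(H^{1/2})/\det(I-H)\to 1/2$ while $\sum(\lambda_i-\tfrac13)^2\to \tfrac23$ and $(n/h(g_0)^2)^{n/2}=(3/4)^{3/2}\approx 0.65$, which forces $A\lesssim 0.35$ and hence $1/\sqrt A>1$. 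So the estimate $\lambda_n^c\leq \tfrac{1}{n}+\tfrac{1}{\sqrt A}$ is vacuous (we already knew $\lambda_n^c<1$), $\|d_yF_c\|$ is not bounded uniformly, and the circularity you correctly identified is not broken.

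The paper sidesteps this by never seeking a global preliminary bound on $\|d_yF_c\|$. Instead it differentiates $g_0(H_y^c U,U)$ along a geodesic in $Y$ and obtains the \emph{self-referential} estimate
\[
\bigl|w\cdot g_0(H_y^c U,V)\bigr|\;\leq\; 2c\Bigl(\tfrac{\sqrt{\lambda_n^c(y)}}{1-\lambda_n^c(y)}+1\Bigr),
\]
where the right-hand side depends only on $\lambda_n^c(y)$ at the current point. This is then used as a differential inequality: starting from $y_0\in Y_{\alpha_1}$ where $\lambda_n^c\leq\tfrac{1}{n}+\beta$, along any geodesic the quantity $\lambda_n^c$ cannot reach $\tfrac{1}{n}+\eta$ before travelling a definite distance $r(\delta,\beta,\eta)>0$, because on the intervening segment the derivative bound stays controlled by $\eta$. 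This ODE-type continuation argument replaces your bootstrap and requires no a~priori global Lipschitz control on $F_c$. The final step---showing $B(y_g,R)$ lies in the $r_n$-neighbourhood of $Y_{\alpha_1}$ via the volume lower bound~(\ref{loc-vol})---is essentially what you outline in (i) and (ii).
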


\begin{proof}  We first prove that  for all $y \in Y$, $\Ar d_yF_c \Ar$ 
is bounded from above by $\lambda_n^c(y)$,  the maximal eigenvalue of $H_y^c$ (see Definition \ref{lambda} ). 
Recall that $0 < \lambda_n^c < 1$. 
Let $u$  be a unit vector in $T_y\tY$ and $v = d_yF_c(u)$.  Equation(\ref{e2.4}) gives 
 
\begin{equation}
\left(1-\lambda_n^c(y)\right) \left|g_0(d_yF_c(u),d_yF_c(u))\right| 
\leq c \lambda_n^c(y)^{1/2} g_0\left(d_yF_c(u),d_yF_c(u)\right)^{1/2}
\end{equation}
hence 
\begin{equation}
 \Ar d_yF_c(u) \Ar_{g_0} \leq \frac{c\sqrt{\lambda_n^c(y)}}{1-\lambda_n^c(y)}.
\label{e2.11}
\end{equation}
We thus have to show that $\lambda_n^c(y)$ is not close to $1$. More precisely, let $\beta>0$ such that 
$\frac{1}{n}+\beta <1$, one then defines 
$$ \gamma(\delta,\beta) := \left( \frac{n-1+\delta}{n-1-n\beta} \right)\sqrt{1+n\beta}-1>0.$$
Clearly, $\gamma(\beta,\delta) \ra 0$ as $\delta,\beta \ra 0$.
 One can check that if $\lambda_n^c(y) \leq \frac{1}{n}+ \beta$, then $\Ar d_yF_c(u) \Ar_{g_0} \leq \sqrt{n}(1+\gamma)$. 
 For our purpose, we may suppose that $\gamma \leq 1$. 
Now let $\delta_{n}>0$ and $\beta_{n}>0$ be such that if $0<\delta \leq 10\delta_{n}$ and  $0<\beta \leq 10\beta_{n}$, then 
$\gamma(\delta,\beta) \leq 1$. Moreover we define  $\varepsilon_{n}>0$ such that 
if $0 < \varepsilon < \varepsilon_{n}$ and $0<\delta \leq 10\delta_{n}$ then, with the notations (\ref{beta_1}) of  
Lemma \ref{l2.4}
 $\beta_{1}(\varepsilon,\delta) \leq \beta_{n}$. In what follows, we suppose $\varepsilon $ and 
$\delta$ sufficiently small.

By (\ref{HId}) we have that  $|\lambda_n^c(y)-\frac{1}{n}| \leq \beta_{1}(\varepsilon,\delta)$ on $Y_{\alpha_1}$. Recall that $Y_{\alpha_1}$ has a large 
relative volume in $Y$. The idea is first to estimate $\lambda_n^c$  on a neighbourhood of $Y_{\alpha_1}$ and then 
to  show that this neighbourhood contains $B(y_g,R)$ if the parameters $\varepsilon$ and $\delta$ are sufficiently small relatively to $R$. 

For this purpose we need to estimate the variation of $\lambda_n^c$. Recall that $H_y^c$ is defined by 
$$ g_0(H_y^c(u),v) = \int_{\px} dB_{(F_c(y),\theta)}(u)dB_{(F_c(y),\theta)}(v)
\ \dsit. $$
Let $U$, $V$ be parallel vector fields near $F_c(y)$ extending 
unit tangent vectors at $F_c(y)$,  $u$ and $v$. We compute 
the derivative of $g_0(H_y^c(U),V) $ in a direction $w \in T_yY$: 
\begin{align*}
\begin{split}
 w.g_0 & (H_y^c(U),V)=\int_{\px}  DdB_{(F_c(y),\theta)}(d_yF(w),U)  dB_{(F_c(y),\theta)}(V)
 \dsit +  
 \end{split}\\
&  
 \int_{\px}dB_{(F_c(y),\theta)}(U) DdB_{(F_c(y),\theta)}(d_yF(w),V)
 \dsit  
+\int_{\px} dB_{(F_c(y),\theta)}(U)dB_{(F_c(y),\theta)}(V)w.\dsit
 \end{align*}
The Buseman functions of the hyperbolic space satisfies $\Ar DdB \Ar \leq 1$ and $\Ar dB \Ar \leq 1$ and thus 
$$|w.g_0 (H_y^c(U),V)| \leq 2 \Ar d_yF_c(w)\Ar_{g_0}+\left| \int_{\px} w.\dsit
 \right|.$$ 
Recall that
$$ \dsit = \frac{\dmu(\theta)}{\mu_y^c(\px)} = 
\frac{\int_{\tY} p(\tf(z),\theta) e^{-c\rho(y,z)}\dvgt(z)\  }
{\int_{\tY} e^{-c\rho(y,z)}\dvgt(z)} d\theta\,.$$ 
Differentiating this formula yields
\begin{eqnarray}
w.\dsit = \frac{\int_{\tY} p(\tf(z),\theta) (-c.d\rho_{(y,z)}(w))
 e^{-c\rho(y,z)}\dvgt(z)\  }{\mu_y^c(\px)}d\theta - \\ 
\frac{\dmu(\theta)}{\mu_y^c(\px)^2}.\int_{\tY}(-c.d\rho_{(y,z)}(w)) 
e^{-c\rho(y,z)}\dvgt(z)\,.
\end{eqnarray}
Since $|d\rho_{(y,z)}(w)| \leq \Ar w \Ar_g$, we have 
\begin{equation}
\left| \int_{\px} w.\dsit \right| \leq 
\int_{\px}2c\Ar w \Ar_g \dsit=2c\Ar w \Ar_g\,,
\end{equation}

we gives that, $\left| w.g_0 (H_y^c(U),V) \right| \leq 2 \Ar d_yF_c(w)\Ar_{g_0} + 2c\Ar w \Ar_g$.
If $w$ is a unit vector, (\ref{e2.11}) yields 
\begin{equation}
\left| w.g_0 (H_y^c(U),V)\right| \leq 2c\left(\frac{\sqrt{\lambda_n^c(y)}}
{1-\lambda_n^c(y)} +1 \right)\,. \label{e2.12}
\end{equation}

Let us now consider small constants $\eta > \beta >0$ and define 
$$ r(\delta,\beta, \eta) := \frac{\eta - \beta}{2(n-1+\delta) \left(\frac{\sqrt{\frac{1}{n} + \eta }} {1-(\frac{1}{n} + \eta )}+1\right)}>0.
$$
Our goal is to prove that 
$$  \inf \Big\{d(y_{0},y_{1})\ | y_{0},y_{1} \in Y,  \lambda_n^c(y_{0}) \leq \frac{1}{n}+\beta,\,\, \lambda_n^c(y_{1}) \geq \frac{1}{n}+\eta \Big\} \geq r(\delta,\beta, \eta).$$ 
Let $y_{0} \in Y$ so that $\lambda_n^c(y_{0}) \leq \frac{1}{n}+\beta$.  Assume that there exists $y \in Y$ such that $\lambda_n^c(y)\geq \frac{1}{n}+\eta$. One defines 
$$ r := \inf \Big\{d(y_{0},y)\ | y \in Y,  \lambda_n^c(y) \geq \frac{1}{n}+\eta \Big\}.$$ 
By continuity, there exists $y_{1} \in Y$ such that $\lambda_n^c(y_{1}) = \frac{1}{n}+\eta$ and $d(y_{0},y_{1}) = r$.\\  
Let $\gamma : [0,r] \longrightarrow Y$ be a minimising geodesic from $y_{0}$ to $y_{1}$. 
We easily see that  $\lambda_n^c(\gamma(t)) < \frac{1}{n} + \eta$ for any $0 \leq t < r$. Let $U(t)$ be a parallel vector 
field in $X$ along $F_c(\gamma)$ such that $U(r)$ is a unit eigenvector of $H_{y_{1}}^c$.   
Then, using (\ref{e2.12}) with $ \dot{\gamma}.g_{0}(H^c_{\gamma(t)}U(t),U(t)) = \frac{d}{dt} g_{0}(H^c_{\gamma(t)}U(t),U(t))$, 
one has 
\begin{eqnarray}
\left| \lambda_n^c(y_{1})- \lambda_n^c(y_{0}) \right|
 & \leq &  \left| g_{0}(H^c_{\gamma(r)}U(r),U(r)) -  g_{0}(H^c_{\gamma(0)}U(0),U(0))\right|\\
 & =  & \left |\int_0^r \frac{d}{dt} g_{0}(H^c_{\gamma(t)}U(t),U(t)) dt \right|\\
 & \leq & 2c \int_0^r (\frac{\sqrt{\lambda_n^c(\gamma(t))}}
{1-\lambda_n^c(\gamma(t))} +1) dt\\
& \leq & 2cr\left( \frac{\sqrt{\frac{1}{n} + \eta }}
{1-(\frac{1}{n} + \eta )}+1 \right)\,.
\end{eqnarray}

As a consequence
$$  r \geq \frac{\eta - \beta}{2(n-1+\delta) \left(\frac{\sqrt{\frac{1}{n} + \eta }} {1-(\frac{1}{n} + \eta )}+1\right)} = r(\delta,\beta, \eta).$$ 

We now set   $\eta = 2\beta_{n}$ so that  $\gamma(\delta,\eta) \leq 1$ for any $\delta\leq \delta_{n}$ . 
One then defines \\$r_{n} :=  r(\delta_{n}, \beta_{n},2\beta_n)$. 
Let us recall that for 
$\varepsilon \leq \varepsilon_{n}$ and $\delta \leq \delta_{n}$, we have $\beta_{1}(\varepsilon,\delta) \leq \beta_{n}$.  
On $Y_{\alpha_1}$, one has $\lambda_{n}^c(y) \leq \frac{1}{n}  +  \beta_{1}(\varepsilon,\delta)\leq  
\frac{1}{n} + \beta_{n}$.  Hence, if $\lambda_{n}^c(y_{1}) \geq \frac{1}{n}  +  2\beta_{n}$, one has 
$$d(y_{1},Y_{\alpha_{1}}) \geq r(\delta,\beta_{1}(\varepsilon,\delta),2\beta_{n}) \geq r(\delta_{n}, \beta_{n},2\beta_n)=r_{n}.$$
We thus have proved that in the 
$r_{n}$-neighbourhood of 
$Y_{\alpha_1}$, one has $\lambda_{n}^c(y) \leq \frac{1}{n} + 2\beta_{n}$.  This implies that 
$$||d_{y}F_{c}|| \leq (1+\gamma(\delta,2\beta_{n})) \sqrt{n} \leq 2 \sqrt{n}.$$ 

Let us denote by $V_{r_{n}}(Y_{\alpha_1})$ the $r_{n}$-neighbourhood of 
$Y_{\alpha_1}$. It remains to show that $B(y_{g},R) \subset V_{r_{n}}(Y_{\alpha_1})$, if $\varepsilon \leq \varepsilon(R)$ 
and $\delta \leq \delta(R)$. 
Let us recall that $\frac{\vol_g(Y_{\alpha_1})}{\vol_g(Y)} \geq 1 - \alpha_1$, hence 
$$\vol_g(Y\setminus Y_{\alpha_1}) \leq \alpha_{1} \vol_{g}(Y) \leq \alpha_{1}(1+\varepsilon) \vol_{g_{0}}(X):=v(\varepsilon,\delta).$$ 
Clearly,  $v(\varepsilon,\delta) 
\ra 0$ when $\varepsilon$, $\delta \ra 0$. 
On the other hand, by (\ref{loc-vol}) for any $y \in B(y_{g},R)$ we have 
\begin{equation}
 \vol_{g}(B_{g}(y,r_{0}))   \geq  v_{n}\frac{  \vol_{\HH^n}(\rm{B}_{\HH^n}(r_{0}))}{\vol_{\HH^n}(\rm{B}_{\HH^n}(1+R+r_{0}))} :=v_{0}(R)>0 .
\end{equation} 
If $v_{0}(R) > v(\varepsilon,\delta) $, then for any $y \in B(y_{g},R)$ one has $B_{g}(y,r_{n}) \not\subset Y\setminus Y_{\alpha_1}$, which means that 
$B_{g}(y,r_{n})$ intersects $Y_{\alpha_1}$. This shows that $d(y,Y_{\alpha_1}) <r_{n}$ and $y \in V_{r_{n}}(Y_{\alpha_1})$.

The lemma is proved if we define $\varepsilon=\varepsilon(R)>0$ and $\delta=\delta(R)>0$ to be sufficiently small constants such that 
$v(\varepsilon,\delta) < v_{0}(R)$.
\end{proof}

%%%%%%%%%%%%%%%%%%%%%%%%%%%%%F_c (1+\alpha)lipschitz%%%%%%%%%%%%

We now prove that $F_c$ is almost $1$-lipschitz. 

\begin{lemma}
\label{l2.6}
For any fixed $R>0$, there exists $\varepsilon_{2}(R)>0$ and $\delta_{2}(R)>0$ such that 
for every $0<\varepsilon<\varepsilon_{2}(R)$ and $0<\delta<\delta_{2}(R)$, there exists 
 $\kappa=\kappa(\varepsilon,\delta,R) >0$ such that on $B_g(y_g,R)$:
\begin{equation}
d_{g_0}(F_c(y_1),F_c(y_2)) \leq (1+\kappa) d_g(y_1,y_2) + \kappa\,.
\end{equation}
Moreover, $\kappa(\varepsilon,\delta,R) \ra 0$ as $\varepsilon$, $\delta \ra 0$.
\end{lemma}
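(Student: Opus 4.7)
The plan is to integrate $\Ar dF_c\Ar$ along a minimising geodesic from $y_1$ to $y_2$, using the sharp bound $\Ar dF_c\Ar\le 1+\alpha_3$ on $Y_{\alpha_1}$ from Lemma \ref{l2.4} wherever possible, and the crude bound $\Ar dF_c\Ar\le 2\sqrt{n}$ from Lemma \ref{l2.5} on the complement. The obstruction is that the geodesic may spend appreciable time in the bad set $Y\setminus Y_{\alpha_1}$; to remedy this I would invoke the Cheeger--Colding segment inequality in order to perturb $y_1,y_2$ to nearby points $y_1',y_2'$ for which this bad time is small.

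First, apply Lemma \ref{l2.5} with $3R$ in place of $R$: choose $\varepsilon_2(R)\le\varepsilon(3R)$ and $\delta_2(R)\le\delta(3R)$ so that $\Ar d_yF_c\Ar\le 2\sqrt{n}$ for every $y\in B_g(y_g,3R)$. For any $y_1,y_2\in B_g(y_g,R)$, set $L=d_g(y_1,y_2)\le 2R$; any minimising geodesic $\gamma\colon[0,L]\to Y$ from $y_1$ to $y_2$ stays in $B_g(y_g,3R)$ by the triangle inequality, hence
$$d_{g_0}(F_c(y_1),F_c(y_2))\le\int_0^L\Ar d_{\gamma(t)}F_c(\dot\gamma(t))\Ar_{g_0}\,dt\le(1+\alpha_3)L+(2\sqrt{n}-1-\alpha_3)\,\tau(\gamma),$$
with $\tau(\gamma):=\bigl|\{t\in[0,L]:\gamma(t)\notin Y_{\alpha_1}\}\bigr|$.

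For a radius $r>0$ to be chosen, set $A=B_g(y_1,r)$ and $B=B_g(y_2,r)$. The Cheeger--Colding segment inequality, available under the Ricci lower bound (\ref{e0.1}) on the ambient ball $B_g(y_g,3R+r)$, applied to $u=\mathbf{1}_{Y\setminus Y_{\alpha_1}}$ yields
$$\int_{A\times B}\tau(y_1',y_2')\,\dvg(y_1')\,\dvg(y_2')\le C(n,R)\bigl(\vol_g(A)+\vol_g(B)\bigr)\,\vol_g(Y\setminus Y_{\alpha_1}),$$
where $C(n,R)$ is controlled by Bishop--Gromov. By Lemma \ref{l2.3}, the right-hand factor satisfies $\vol_g(Y\setminus Y_{\alpha_1})\le\alpha_1(1+\varepsilon)\vol_{g_0}(X)=:v(\varepsilon,\delta)$, which tends to $0$ as $\varepsilon,\delta\to 0$; by (\ref{loc-vol}) one has $\vol_g(A),\vol_g(B)\ge v_0(r,R)>0$. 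Hence one can select $y_1'\in A$ and $y_2'\in B$ with $\tau(y_1',y_2')\le 2C(n,R)\,v(\varepsilon,\delta)/v_0(r,R)$.

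Applying the integration step to the good pair $(y_1',y_2')$, which satisfies $d_g(y_1',y_2')\le L+2r$, and invoking the triangle inequality with $d_{g_0}(F_c(y_i),F_c(y_i'))\le 2\sqrt{n}\,r$ (via Lemma \ref{l2.5}) for $i=1,2$, delivers the desired estimate with
$$\kappa(\varepsilon,\delta,R)=\bigl(2(1+\alpha_3)+4\sqrt{n}\bigr)r+(2\sqrt{n}-1-\alpha_3)\,\frac{2C(n,R)\,v(\varepsilon,\delta)}{v_0(r,R)}.$$
Choosing $r=r(\varepsilon,\delta)\to 0$ slowly enough that $v(\varepsilon,\delta)/v_0(r,R)\to 0$ (possible since $v_0(r,R)$ depends polynomially on $r$ by Bishop--Gromov) ensures $\kappa\to 0$. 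The main technical obstacle is the absence of a global diameter bound on $(Y,g)$ when invoking the segment inequality; this is circumvented by restricting all minimising geodesics to the ball $B_g(y_g,3R+r)$, on which the Ricci lower bound and the universal volume lower bound (\ref{e1.1}) yield the required volume comparisons and make $C(n,R)$ controllable.
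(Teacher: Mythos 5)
Your proposal is correct and takes essentially the same route as the paper's proof: both rely on the Cheeger--Colding segment inequality applied on small balls $B(y_1,r)\times B(y_2,r)$ inside a larger ball around $y_g$ (where the Ricci bound, the volume lower bound (\ref{loc-vol}) and Bishop--Gromov control the constants), combined with the almost-isometry of $d F_c$ on $Y_{\alpha_1}$ from Lemma \ref{l2.4}, the crude bound $\Vert dF_c\Vert\le 2\sqrt n$ from Lemma \ref{l2.5}, the smallness of $\vol_g(Y\setminus Y_{\alpha_1})$ from Lemma \ref{l2.3}, and a final triangle inequality back to $y_1,y_2$. The only (harmless) differences are that you apply the segment inequality to the indicator of the bad set and bound the time spent there directly, while the paper applies it to $e(y)=\sup_u\left(\Vert d_yF_c(u)\Vert-\Vert u\Vert\right)^2$ and uses Cauchy--Schwarz together with a case split according to whether $d(y_1,y_2)$ is small, and that your final $\kappa$ should formally be enlarged to $\max(\alpha_3,\cdot)$ so that the factor $(1+\kappa)$ absorbs the multiplicative $(1+\alpha_3)$.
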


\begin{proof} 
The idea goes as follows. We have proved that $d_yF_c$ is almost isometric on $Y_{\alpha_1}$.  
On the other hand, $||d_yF_c||$ is uniformly bounded in $B(y_g,R)$ if the parameters $\varepsilon$ and $\delta$ are chosen sufficiently small. To prove the lemma 
one computes the lengths of $F_c(\gamma)$ where $\gamma$ is a minimising geodesic in 
$B(y_g,R)$ whose intersection with $Y_{\alpha_1}$ is large. Existence of such geodesics 
follows from an integral geometry lemma due to T. Colding.

Fix some $R>0$. We define the following constants : 

If $d>0$,  
$$c_1(n,d) := \sup_{0<s/2<r<s<d} \frac{\vol_{\HH^n}(\partial B_{\HH^n}(s))}{\vol_{\HH^n}(\partial B_{\HH^n}(r))}.$$
If $\tau>0$, $R>0$, 
 $$c_2(n,\tau,R) := c_1(n,2R)(2\tau \vol_{\HH^n}(\rm{B}_{\HH^n}(\tau)).$$
If $\varepsilon>0$, $\delta>0$, 
$$\theta(\varepsilon,\delta) := 2 \alpha_3^2(\varepsilon,\delta)\vol_{g_0}(X) + 2(4n+1)\alpha_{1}(\varepsilon,\delta)\vol_{g_0}(X).$$
Clearly, $\theta(\varepsilon,\delta)\ra 0$ as $\varepsilon, \delta \ra 0$.  
   
Let $\tau(\varepsilon,\delta,R)>0$ be the function implicitely defined by 
    $$ \vol_{\HH^n}(\tau)\tau := \theta(\varepsilon,\delta) \frac{2c_1(n,2R) \vol_{\HH^n}(1+R+1)^2}{v_n^2}.$$
 Again, we easily see that, for fixed $R$, $\tau(\varepsilon, \delta,R) \ra 0$ as $\varepsilon$, $\delta \ra 0$. 
We also choose $\varepsilon_2(R)>0$ and $\delta_{2}(R)>0$ such that $\varepsilon_{2}(R) \leq \varepsilon(2R)$, 
 $\delta_2(R)<\delta(2R)$ and such that, if 
 $0 < \varepsilon \leq \varepsilon_2(R)$ and $0 < \delta < \delta_2(R)$, then 
 $\tau(\varepsilon,\delta,R) <<1$. 

Finally, one defines $\kappa(\varepsilon,\delta,R):= \max(2\sqrt{n}\sqrt{\tau},8\sqrt{\tau})$. From the remarks above we can choose $\varepsilon_{2}(R)$ and $\delta_{2}(R)$ so that $\kappa(\varepsilon,\delta,R)<1/R$ (for $0 < \varepsilon \leq \varepsilon_2(R)$, $0 < \delta < \delta_2(R)$ and $R$ big).

There are two cases.

\textbf{Case i)} Let  $y_1$, $y_2$ in $B_g(y_g,R)$ such that $d(y_1,y_2) \leq \sqrt{\tau}$.
Using (\ref{e2.10}), if $0 < \varepsilon < \varepsilon(2R)$, $0 < \delta < \delta(2R)$ one has  
\begin{equation}
d(F_c(y_1),F_c(y_2)) \leq 2\sqrt{n}\sqrt{\tau} \leq \kappa. \label{e2.19}
\end{equation}

\textbf{Case ii) : } Let $y_1$, $y_2$ in $B_g(y_g,R)$ such that 
 $d(y_1,y_2) \geq \sqrt{\tau}$. We will use the following theorem, due to J. Cheeger and
T. Colding, cf. \cite[Theorem 2.11]{Col} that we describe now in a particular case. We keep the notations
of \cite{Col}.

 Let us define $A_1=B_g(y_1,\tau)$, $A_2 = B_g(y_2,\tau)$ and $W=B_g(y_g,2R)$ where $y_1$ and $y_2$
are points as above sitting on a complete riemannian manifold $(Y,g)$ with
$\ric_g \geq -(n-1)g$. 
 For any $z_1 \in A_1$ and any unit vector $v_1 \in T_{z_1}Y$, the set 
$I(z_1,v_1)$ defined by
 $$ I(z_1,v_1) = \{t \mid \gamma(t) \in A_2,\gamma_{|[0,t]} \text{ is minimal }, \gamma'(0)=v_1 \}$$ 
has a measure $|I(z_1,v_1)|$ bounded above by $2\tau$. Thus 
 $$D(A_1,A_2) :=\sup_{z_1,v_1} |I(z_1,v_1)| \leq 2 \tau,$$
and similarly, $D(A_2,A_1) \leq 2\tau$. 
 For any $z_1 \in A_1$ and $z_2 \in A_2$, let $\gamma_{z_1z_2}$ be a minimizing 
geodesic from $z_1$ to $z_2$. Clearly, $\gamma \subset B(y_g,2R)$. Then, by 
\cite[Theorem 2.11]{Col}, we have for any non negative integrable function $e$ defined on $Y$,

\begin{equation*}
 \int_{A_1 \times A_2} \int_0^{d(z_1,z_2)} e(\gamma_{z_1,z_2})(s)\ ds 
  \leq  c_1(n,2R)\left(D(A_1,A_2)\vol(A_1)+D(A_2,A_1)\vol(A_2)\right) 
\end{equation*}
\begin{equation}
\times \int_W e(y)\ \dvg(y). \label{cheeg-col}
\end{equation}

By  Bishop's Theorem, for $i=1$,$2$ we have  
$$\vol_g(A_i) \leq \vol_{\HH^n}(\rm{B}_{\HH^n}(\tau)),$$ 
and thus 
$$ c_1(n,2R)\left(D(A_1,A_2)\vol(A_1)+D(A_2,A_1)\vol(A_2)\right) \leq c_2(n,\tau,R).$$

Therefore, applying (\ref{cheeg-col}) to the function
$$e(y)= \sup_{u\in U_yY} \left( \Ar d_yF_c(u)\Ar - \Ar u \Ar \right)^2$$
and using (\ref{e2.8}) on $W\cap Y_{\alpha_1}$ and (\ref{e2.10}) on $W \setminus Y_{\alpha_1}$, we get
\begin{eqnarray}
 \int_{A_1 \times A_2} \int_0^{d(z_1,z_2)} e(\gamma_{z_1,z_2})(s)\ ds 
 & \leq & c_2(n,\tau,R) \left(\int_{W\cap Y_{\alpha_1}}e(y)\ \dvg(y) + \int_{W\setminus Y_{\alpha_1}} e(y)\ \dvg(y)\right) \nonumber \\
& \leq & c_2(n,\tau,R) \left(\alpha_3^2.\vol_g(Y) + (4n+1)\vol_g(Y\setminus Y_{\alpha_1})\right) \nonumber \\
& \leq & c_2(n,\tau,R) \theta(\varepsilon,\delta).  \label{e2.15} 
\end{eqnarray}
Now, if we denote by $\gamma := \gamma_{z_1z_2}$, we have 
\begin{eqnarray*}
|\ell(F_c \circ \gamma) - \ell(\gamma)|& = & \left|\int_0^{d(z_1,z_2)} 
\Ar d_{\gamma(s)}F_c(\dot{\gamma}) \Ar - \Ar \dot{\gamma} \Ar \ 
ds \right|\\
 & \leq & \int_0^{d(z_1,z_2)} \sup_{u\in T_yY} \left|\Ar d_{\gamma(s)}F_c(u)\Ar - \Ar u \Ar \right| \ ds.
\end{eqnarray*}
Using Cauchy-Schwarz inequality we have
\begin{eqnarray*}
\frac{|\ell(F_c \circ \gamma) - \ell(\gamma)|^2 }{d(z_1,z_2)} & \leq & 
\frac{\left( \int_0^{d(z_1,z_2)} \sup_u \left|\Ar d_{\gamma(s)}F_c(u)\Ar - \Ar u \Ar \right| \ ds  \right)^2 }{d(z_1,z_2) }\\
& \leq & \int_0^{d(z_1,z_2)} e(\gamma(s)) \ ds.
\end{eqnarray*}

Integrating on $A_1 \times A_2$, we deduce from (\ref{e2.15}) that 
\begin{equation}
 \int_{A_1 \times A_2} \frac{|\ell(F_c \circ 
\gamma_{z_1z_2}) - \ell(\gamma_{z_1z_2})|^2 }{d(z_1,z_2)}\dvg(z_1)\dvg(z_2)
\leq c_2(n,\tau,R) \theta(\varepsilon,\delta)\,. \label{e2.16}
\end{equation}
By (\ref{loc-vol}), for $i=1$,$2$ one has 
$$  \vol_g(A_i) \geq  v_{n}\frac{\vol_{\HH^n}(\rm{B}_{\HH^n}(\tau))}{\vol_{\HH^n}(\rm{B}_{\HH^n}(1+R+\tau))}:=v_0(\tau,R)>0.$$
From the obvious inequality
$$ c_2(n,\tau,R) \theta(\varepsilon,\delta) \leq \frac{1}{v_0(\tau,R)^2} 
\int_{A_1 \times A_2} c_2(n,\tau,R) \theta(\varepsilon,\delta)\ \dvg(z_1)\dvg(z_2).$$
We get 
\begin{equation}
\int_{A_1 \times A_2} \frac{|\ell(F_c \circ 
\gamma_{z_1z_2}) - \ell(\gamma_{z_1z_2})|^2 }{d(z_1,z_2)} \leq 
\int_{A_1 \times A_2}  \frac{c_2(n,\tau,R)\theta(\varepsilon,\delta)}{v_0(\tau,R)^2}.\label{e2.17}
\end{equation}
As a consequence there exist $z_1 \in A_1$ and $z_2 \in A_2$ such that
$$ |\ell(F_c \circ 
\gamma_{z_1z_2}) - \ell(\gamma_{z_1z_2})|^2 \leq d(z_1,z_2)
   \frac{c_2(n,\tau,R)\theta(\varepsilon,\delta)}{v_0(\tau,R)^2}\,. $$
On the other hand one can check that by definition of $\tau$,
  $$\frac{c_2(n,\tau,R)\theta(\varepsilon,\delta)}{v_0(\tau,R)^2} = 
  \theta(\varepsilon,\delta) \frac{2 c_1(n,2R) \vol_{\HH^n}(1+R+1)^2}{v_n^2 \vol_{\HH^n}(\tau)} \tau = \tau^2.$$
  
This yields 
$$ |\ell(F_c \circ 
\gamma_{z_1z_2}) - \ell(\gamma_{z_1z_2})|^2 \leq d(z_1,z_2) \tau^2,$$
and 
$$ d(F_c(z_1),F_c(z_2)) \leq \ell(F_c \circ 
\gamma_{z_1z_2}) \leq d(z_1, z_2) + \tau \sqrt{d(z_1,z_2)}.
$$
Since $d(y_i,z_i) < \tau$ and $d(y_1,y_2)\geq \sqrt\tau$, we have 
$$ d(z_1,z_2) \leq d(y_1,y_2) + 2 \tau \leq d(y_1,y_2)(1+2\sqrt{\tau}).$$
With our choice of $\tau$ very small compared to $1$, we also have 
$$ d(z_1,z_2) \geq d(y_1,y_2) - 2 \tau \geq \frac{\sqrt{\tau}}{2}.$$
 
We then have
\begin{eqnarray}
d(F_c(y_1),F_c(y_2)) & \leq & d(F_c(y_1),F_c(z_1)) + 
d(F_c(z_1),F_c(z_2))+ d(F_c(z_2),F_c(y_2))\\
 & \leq & 2\sqrt{n}\tau + d(z_1,z_2) + \tau (d(z_1,z_2))^{1/2}
+2\sqrt{n}\tau \\
 & \leq & 4\sqrt{n}\tau + d(y_1,y_2)\frac{d(z_1,z_2)}{d(y_1,y_2)}(1+ \tau
(d(z_1,z_2))^{-1/2})\\
& \leq & 4\sqrt{n}\tau + d(y_1,y_2) (1+2\sqrt{\tau})
(1+\sqrt{2}\tau^{3/4} )\\
& \leq & 4\sqrt{n}\tau + d(y_1,y_2) (1+8\sqrt{\tau})\,.
\end{eqnarray}
We finally get
\begin{equation}
d(F_c(y_1),F_c(y_2)) \leq \kappa + (1+\kappa) d(y_1,y_2)\,, \label{e2.18}
\end{equation}
in case ii). 
\end{proof}

%%%%%%%%%%%%%%%%%%%% RÈsumÈ propriÈtÈs F_c%%%%%%%%%%%%%%%

%Summarizing, we have prove the following:

%\begin{proposition} 
%\label{p2.1}
%There is $\alpha=\alpha(\varepsilon,\delta)>0$ such that \ae on $Y$,
%\begin{eqnarray}
%(1-\alpha)& \leq | \jac F_c(y)| \leq & 
%(1+\alpha)  \label{p2.1.1}\\
%(1-\alpha)\Ar u \Ar & \leq \Ar d_yF_c.u \Ar  \leq & (1+\alpha) \Ar u \Ar \label{p2.1.2} 
%\end{eqnarray}
%For $R>0$, $0 < \varepsilon \leq \varepsilon(R)$, $0<\delta < \delta(R)$, %there 
%is $\beta=\beta(\varepsilon,\delta,R)>0$ such that on $B_g(y_g,R)$
%\begin{eqnarray}
%\Ar d_yF_c \Ar & \leq & \sqrt{n}(1+\beta) \label{p2.1.4}\\
%d_{g_0}(F_c(y_1),F_c(y_2)) & \leq &  (1+\beta) d_g(y_1,y_2) +\beta  
%\label{p2.1.3}
%\end{eqnarray}
%Moreover, $\alpha(\varepsilon,\delta) \ra 0$ and %$\beta(\varepsilon,\delta,R) \ra 
%0$ as $\varepsilon$,$\delta\ra 0$.
%\end{proposition}

%%%%%%%%%%%%%%%%%%%%%%%%%Passage ‡ la limite%%%%%%%%%%%%%

\section{A limit map on the limit space}

In this section, we consider a sequence $(Y_{k},g_{k})_{k\in \NN}$ of closed Riemannian $n$-manifolds satisfying the curvature bound (\ref{e0.1}) and the following assumption: we suppose that there exist an closed hyperbolic $n$-manifold $(X,g_{0})$, degree one maps 
$f_{k} : Y_{k} \rightarrow X$ and a sequence $\varepsilon_{k} \ra 0$  such that 
\begin{equation}
\vol_{g_{k}}(Y_{k}) \ra \vol_{g_{0}}(X)\,, \label{as_vol} 
\end{equation}
as $k$ goes to $+\infty$. From (\ref{e1.1}), for every $k \in\NN $, there exists $y_{g_{k}} \in Y_{k}$ satisfying the local volume estimate, that is $\vol (B_{g_k}(y_{g_k}, 1))\geq v_n>0$. For the sake of simplicity we shall use the notation $y_k$ instead of $y_{g_k}$.

Below, we prove that $(Y_{k},g_k,y_k)$ sub-converges in the pointed Gromov-Haudorff topology to a limit metric space $(Y_{\infty},d_{\infty},z_{\infty})$. Moreover, there exists a sequence of natural maps $F_{c_k} : (Y_k,g_k) \rightarrow (X,g_0)$, with suitably chosed parameters $c_k$,  which sub-converges to a "natural map" $ F : Y_{\infty} \longrightarrow X$. 

Let us  recall the definition of  the Gromov-Hausdorff topology. For two subsets $A,B$ of a metric space $Z$ the Hausdorff distance between $A$ and $B$ is 
$$\dh^Z(A,B) := \inf \{\varepsilon>0 \mid B \subset V_\varepsilon(A) \text{ and } 
A \subset V_\varepsilon(B)\} \in \RR \cup\{\infty\}.$$ 
It is a distance on compact subsets of $Z$ (see \cite{Fed}).

\begin{definition}[\cite{Gro}] Let $X_1$, $X_2$ be two metric spaces, then the Gromov-Hausdorff distance $\dgh(X_1,X_2) \in \RR \cup {\infty}$ is the infimum of the numbers 
 $$\dh^Z(f_1(X_1),f_2(X_2)))$$
 for all metric spaces $Z$ and all isometric embeddings $f_i : X_i \rightarrow Z$.
\end{definition}
It is a distance on the space of isometry classes of compact metric spaces. One says that a sequence $(X_i)_{i \in \NN}$ of metric spaces converges in the Gromov-Hausdorff topology to a metric space $X_\infty$ if $\dgh(X_i,X_\infty) \rightarrow 0$ as $i \rightarrow \infty$. Let $x_i \in X_i$ and $x_\infty \in X_\infty$, one says that the 
sequence $(X_i,x_i)_{i \in \NN}$ \emph{converges to $(X_\infty,x_\infty)$ in the pointed Gromov-Hausdorff topology} if for any $R>0$,  $\dgh(B_{X_i}(x_i,R), B_{X_\infty}(x_\infty,R)) \rightarrow 0$ as $i \rightarrow +\infty$ (in fact this definition holds only for length spaces, which will be sufficient in our 
situation). 

To deal with the Gromov-Hausdorff distance between $X_1$ and $X_2$, it is convenient to 
avoid the third space $Z$ by using $\varepsilon$-approximations between $X_1$ and 
$X_2$\,.

\begin{definition} Given two metric spaces $X_1$,$X_2$ and $\varepsilon>0$,  an $\varepsilon$-approximation (or $\varepsilon$-isometry) from $X_1$ to $X_2$ is a map 
$f : X_1 \rightarrow X_2$ such that 
\begin{enumerate}
\item for any $x,x' \in X_1$, $|d_{X_2}(f(x),f(x'))-d_{X_1}(x,x')| < \varepsilon$.
\item the $\varepsilon$-neighbourhood of $f(X_1)$ is equal to $X_2$.
\end{enumerate}
\end{definition}

Then one can show (see \cite[Corollary 7.3.28]{BBI}) that $\dgh(X_1,X_2) < \varepsilon$ 
if  there exists a $2\varepsilon$-approximation from $X_1$ to $X_2$ and similarly an $\varepsilon$-approximation exists if $\dgh(X_1,X_2) < 2\varepsilon$. Let us insist on the fact that these approximations may be neither continuous nor even measurable.  

Our goal is to prove the :

\begin{proposition} \label{prop3.3}
Up to extraction and renumbering, the sequence $(Y_{k},g_k,y_{k})$ satisfies the following. 
\begin{enumerate}
\item There exists a complete pointed length space $(Y_{\infty},d_{\infty},y_{\infty})$   such that $(Y_{k},g_k,y_{k})$ converges in the pointed Gromov-Hausdorff topology to a metric space $(Y_{\infty},d_{\infty},y_{\infty})$. Moreover, $(Y_{\infty},d_{\infty})$ has Hausdorff dimension equal to $n$.
\item there exist sequences of positive numbers $\varepsilon_k$, $\delta_{k}$ going to $0$,
  $c_k$ such that $h(g_{k}) < c_{k}< h(g_{k})+\delta_{k}$, $R_{k}$ going to $+\infty$ such that $\varepsilon_{k} \leq \varepsilon(R_{k})$ and $\delta_{k}\leq \delta(R_{k})$. There also exist  
 and $\alpha_{k}$-approximations $\psi_{k} : B_{d_{\infty}}(y_{\infty},R_{k}) \ra B_{g_{k}}(y_{g_{k}},R_{k})$ such that the following holds. 
 Let $$F_{c_{k}} : (Y_{k},g_{k}) \ra (X,g_{0})$$
 be the natural map as defined in section 2. Then  $F_{c_{k}}\circ \psi_{k}$ converges uniformly on compact sets to a map $$F : Y_{\infty} \longrightarrow X,$$
which is $1$-lipschitz. 
\end{enumerate}
\end{proposition}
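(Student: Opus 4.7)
The proof naturally splits into two parts: first, existence of the Gromov-Hausdorff limit together with the statement on its dimension; second, construction of the limit map as a uniform-on-compacts limit of a diagonal subsequence of $F_{c_k}\circ\psi_k$.

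For part (1), I invoke Gromov's precompactness theorem: since $\ric_{g_k}\geq -(n-1)g_k$, the pointed sequence $(Y_k,g_k,y_k)$ is relatively compact in the pointed Gromov-Hausdorff topology, and a subsequence converges to some complete pointed length space $(Y_\infty,d_\infty,y_\infty)$. The non-collapsing estimate $\vol_{g_k}(B(y_k,1))\geq v_n$ combined with the Bishop-Gromov inequality (\ref{loc-vol}) provides a uniform lower bound for the volume of every ball of fixed size around a point of bounded distance from $y_k$. Cheeger-Colding's volume convergence theorem applies in this non-collapsed setting: the Hausdorff measure of balls in $Y_\infty$ is the limit of the Riemannian volumes, and the Cheeger-Colding regularity theory shows that $(Y_\infty,d_\infty)$ is $n$-rectifiable with Hausdorff dimension exactly $n$.

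For part (2), I first choose the parameters. The hypothesis gives $\varepsilon_k\to 0$; by (\ref{e1.3}), $h(g_k)\leq n-1$, and I may choose $c_k>h(g_k)$ with $\delta_k:=c_k-h(g_k)$ arbitrarily small. Denote by $\varepsilon(R),\delta(R),\varepsilon_2(R),\delta_2(R)$ the functions produced by Lemmas \ref{l2.5} and \ref{l2.6}. Since these are strictly positive for each fixed $R$, a diagonal argument yields a slowly diverging sequence $R_k\to +\infty$ such that $\varepsilon_k\leq\min(\varepsilon(R_k),\varepsilon_2(R_k))$, $\delta_k\leq\min(\delta(R_k),\delta_2(R_k))$, and such that moreover $\kappa_k:=\kappa(\varepsilon_k,\delta_k,R_k)\to 0$. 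With these choices, Lemmas \ref{l2.5} and \ref{l2.6} are valid on the balls $B_{g_k}(y_k,R_k)$. By definition of pointed GH convergence, after a further refinement of the subsequence, there exist $\alpha_k$-approximations $\psi_k:B_{d_\infty}(y_\infty,R_k)\to B_{g_k}(y_k,R_k)$ with $\alpha_k\to 0$. By Lemma \ref{l2.6},
\begin{equation*}
d_{g_0}\bigl(F_{c_k}(y),F_{c_k}(y')\bigr)\leq(1+\kappa_k)\,d_{g_k}(y,y')+\kappa_k
\end{equation*}
for $y,y'\in B_{g_k}(y_k,R_k)$, and composing with $\psi_k$ and using the $\alpha_k$-approximation property gives
\begin{equation*}
d_{g_0}\bigl(F_{c_k}\circ\psi_k(z),F_{c_k}\circ\psi_k(z')\bigr)\leq(1+\kappa_k)\bigl(d_\infty(z,z')+\alpha_k\bigr)+\kappa_k.
\end{equation*}
Thus the maps $F_{c_k}\circ\psi_k$ are equi-almost-Lipschitz from $B_{d_\infty}(y_\infty,R_k)$ to the compact space $X$.

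To conclude, I apply a standard Arzel\`a-Ascoli argument adapted to the GH setting. Fix a countable dense subset $\{z_i\}_{i\in\NN}\subset Y_\infty$. By compactness of $X$ and a Cantor diagonal extraction, a further subsequence has $F_{c_k}\circ\psi_k(z_i)$ convergent in $X$ for every $i$, to a point $F(z_i)$. The almost-Lipschitz estimate ensures that $z\mapsto F(z_i)$ is $1$-Lipschitz on $\{z_i\}$, hence extends uniquely to a $1$-Lipschitz map $F:Y_\infty\to X$. A standard $\varepsilon/3$-argument, combined with density of $\{z_i\}$ and equi-continuity on each ball $B_{d_\infty}(y_\infty,R)$, upgrades pointwise convergence on $\{z_i\}$ to uniform convergence on compact subsets of $Y_\infty$. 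I expect the main obstacle to be the simultaneous calibration of the parameters $c_k,\delta_k,R_k$ so that the effective range $R_k$ over which Lemmas \ref{l2.5} and \ref{l2.6} are available goes to $+\infty$; once this diagonal selection is in place, the remainder is an adaptation of Arzel\`a-Ascoli to the (possibly discontinuous) approximations $\psi_k$, which is routine in Cheeger-Colding theory. The non-collapsedness of the limit, needed to identify the Hausdorff dimension as $n$, relies crucially on propagating the basepoint volume bound via (\ref{loc-vol}) to all balls of $B(y_k,R_k)$.
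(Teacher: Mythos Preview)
Your proposal is correct and follows essentially the same route as the paper: Gromov/Cheeger--Colding precompactness plus non-collapsing for (1), and for (2) a parameter calibration so that Lemma~\ref{l2.6} applies on balls of diverging radii, followed by an Arzel\`a--Ascoli/diagonal extraction using the almost-Lipschitz estimate composed with the $\alpha_k$-approximations. The only cosmetic difference is the bookkeeping of the diagonal: the paper indexes by the radius $m$, first choosing $\varepsilon_m,\delta_m$ so that $\kappa(\varepsilon_m,\delta_m,m)\leq 1/m$ and then selecting $k(m)$ large enough, whereas you index by $k$ and let $R_k$ diverge slowly; both organize the same selection.
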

The proof is divided in two steps described in the following sections.

\subsection*{Existence of the limit and its properties} 

Under the curvature bound (\ref{e0.1}) and the local volume estimate (\ref{loc-vol}), (1) 
of Proposition \ref{prop3.3}
is a straightforward application of Gromov \& Cheeger-Colding compactness theorem, see \cite[Theorem 1.6]{Ch-Co}.
Before proving point (2) of 
Proposition \ref{prop3.3}, let us describe some features of the convergence and of the limit space which
will be used later.  

The continuity  of the volume under the (pointed) Gromov-Hausdorff convergence is crucial for our purposes. For $\ell>0$, note $\mathcal{H}^\ell$ the $\ell$-dimensional Hausdorff measure of a metric space (see \cite{BBI} definition 
1.7.7). 
\begin{theorem}[\cite{Ch-Co}, Theorem 5.9]
\label{t3.1}
Let $p_i \in Y_i$  and $p_{\infty} \in Y_{\infty}$ their limit, and let $R>0$. 
Then 
\begin{equation} \lim_{i \ra +\infty} \vol_{g_i}(B(p_i,R)) = \H(B(p_\infty,R))\,.
\label{e3.2}
\end{equation}
\end{theorem}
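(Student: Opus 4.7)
The plan is to argue in three movements: extract a candidate limit measure by weak compactness, bound it from above using Bishop--Gromov, and finally identify it with $\mathcal{H}^n$ via the almost-rigidity theory of Cheeger--Colding that governs the structure of non-collapsed Ricci limit spaces.

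First, I would fix $R' > R$ and pick Gromov--Hausdorff $\alpha_i$-approximations $\phi_i : B_{g_i}(p_i, R') \to B_{d_\infty}(p_\infty, R')$ with $\alpha_i \to 0$, and transport the Riemannian measures to the limit by setting $\mu_i := (\phi_i)_\ast \vol_{g_i}$. The Bishop--Gromov inequality, applied with $\ric_{g_i} \geq -(n-1)g_i$ and the local non-collapsing hypothesis $\vol_{g_i}(B(y_{k},1)) \geq v_n$ transported along the sequence, gives uniform upper and lower bounds on $\mu_i(B(x, r))$ for each $r$. Consequently $(\mu_i)$ is uniformly locally finite and, by a standard Banach--Alaoglu argument on Radon measures, some subsequence converges in the weak-$\ast$ sense on $B_{d_\infty}(p_\infty, R')$ to a Radon measure $\nu_\infty$.

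Second, I would establish the monotonicity/comparison estimate
\begin{equation*}
\frac{\nu_\infty(B(x,r))}{\nu_\infty(B(x,s))} \geq \frac{\vol_{\HH^n}(\rm{B}_{\HH^n}(r))}{\vol_{\HH^n}(\rm{B}_{\HH^n}(s))}, \qquad 0<r\leq s,
\end{equation*}
which follows from passing to the limit in the Bishop--Gromov ratio after observing that weak convergence preserves the comparison (use open balls for a lower semicontinuity inequality and closed balls for the upper one, combined with a Fubini/co-area check to handle boundaries having zero $\nu_\infty$-mass). This already yields the semicontinuity bound $\limsup_i \vol_{g_i}(B(p_i,R)) \leq \nu_\infty(\overline{B}(p_\infty, R))$ and its matching liminf estimate for open balls.

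Third, and this is where the real work lives, I would identify $\nu_\infty$ with $\mathcal{H}^n$. Here one invokes Cheeger--Colding's structural result: because the sequence is non-collapsing, $\nu_\infty$-almost every $x \in Y_\infty$ is an $n$-regular point, \emph{i.e.} every tangent cone at $x$ is isometric to $(\RR^n, 0)$. Proving this uses volume monotonicity (to extract tangent cones) together with the almost-metric-cone theorem (equality case of Bishop--Gromov forces a cone, and the almost-equality version forces Gromov--Hausdorff closeness to a cone). At a regular point, the same almost-rigidity statement, applied at successively smaller scales, forces
\begin{equation*}
\lim_{r \to 0}\ \frac{\nu_\infty(B(x,r))}{\omega_n r^n} = 1,
\end{equation*}
and a Radon--Nikodym/differentiation argument (Vitali covering in the doubling space $(Y_\infty, \nu_\infty)$, doubling being a consequence of Bishop--Gromov) then gives $\nu_\infty = \mathcal{H}^n$ on $B(p_\infty, R)$. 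Combined with the semicontinuity of step two, this yields $\vol_{g_i}(B(p_i,R)) \to \mathcal{H}^n(B(p_\infty, R))$, provided one checks that $\mathcal{H}^n(\partial B(p_\infty, R)) = 0$ for a.e. $R$ (again by the doubling property).

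The main obstacle is clearly the third step: everything hinges on the almost-rigidity machinery establishing that non-collapsing Ricci limits have tangent cones equal to $\RR^n$ almost everywhere. The analytic heart of that argument (segment inequality, splitting theorem, and the almost-metric-cone theorem) lies entirely in the Cheeger--Colding papers and is the reason the statement is quoted rather than reproved here; the earlier two steps, by contrast, are essentially soft consequences of Bishop--Gromov and weak-$\ast$ compactness.
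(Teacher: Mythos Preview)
The paper does not prove this statement at all: Theorem~\ref{t3.1} is simply quoted from \cite[Theorem~5.9]{Ch-Co} and used as a black box. Your sketch is a faithful outline of the original Cheeger--Colding argument (push forward the volumes, pass Bishop--Gromov to the limit, then identify the limit measure with $\mathcal{H}^n$ via the regularity and almost-cone theory), and you yourself correctly diagnose that the third step is the entire content and is precisely why the present paper cites rather than reproves the result. So there is nothing to compare on the paper's side; your proposal is an accurate high-level summary of the proof in \cite{Ch-Co}, with the honest caveat that the almost-rigidity input it rests on is not something one can supply in a paragraph.
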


In particular, $Y_\infty$ satisfies the Bishop-Gromov inequalities (\ref{BG}) and the Bishop inequality. 
By definition, a \emph{tangent cone at} $p \in Y_{\infty}$  is a complete pointed 
Gromov-Hausdorff limit, $\{Y_{\infty,p},d_\infty,p_\infty\}$ of a sequence of 
rescaled space, $\{(Y_{\infty},r_i^{-1}d,p)\}$, where $\{r_i\}$ is a positive sequence 
such that $r_i \ra 0$. Indeed, by \cite[Proposition 5.2]{GLP}, every such sequence has 
a convergent subsequence, but the limit might depend on  the choice of the 
sub-sequence. Notice that this notion is different from the one  described in \cite[Chapter 8]{BBI} where the authors require that the limit is unique (does not depend on the sub-sequence). 

\begin{definition} \label{singular}
The $\mathrm{regular}$ set $\cR$ consists of those points, $p \in Y_{\infty}$, 
such that every tangent cone at $p$ is isometric to $\RR^n$. 
The complementary $\cS = Y_{\infty}\setminus \cR$ is the $\mathrm{singular}$ set.  
\end{definition}

Let $B_0^n(1) \subset \RR^n$ be the unit ball. 

\begin{definition} The $\varepsilon$-regular 
set $\cRe$ consists of those points, $p \in Y_{\infty}$, such that every tangent 
cone, $(Y_{\infty,p},p_\infty)$, satisfies $d_{GH}(B(p_\infty,1),B_0^n(1))< \varepsilon$. 
A point in $Y_{\infty}\setminus\cRe=\cSe$ is called $\varepsilon$-singular, 

\end{definition}

\begin{theorem}[\cite{Ch-Co}, Theorem 5.14]
\label{t3.2} There exists $\varepsilon_{n}>0$ such that 
 for $\varepsilon \leq \varepsilon_n$, $\overset{\circ}{\cRe}$ has a natural smooth manifold structure. Moreover, for this parametrization, the metric on 
  $\overset{\circ}{\cRe}$  is bi-h\"older equivalent to a smooth Riemannian metric.
 The exponent $\alpha(\varepsilon)$ in this bi-h\"older equivalence satisfies $\alpha(\varepsilon) \ra 1$ as 
$\varepsilon \ra 0$. 
\end{theorem}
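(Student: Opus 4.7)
The plan is to use the almost volume cone implies almost metric cone theorem of Cheeger-Colding together with the almost splitting theorem to construct harmonic coordinates on $\overset{\circ}{\cRe}$ and then estimate the resulting change of metric. First, at a point $p\in \overset{\circ}{\cRe}$, some tangent cone $(Y_{\infty,p},p_\infty)$ satisfies $\dgh(B(p_\infty,1),B^n_0(1))<\varepsilon$. Combined with the Bishop-Gromov inequality (which passes to the limit by Theorem \ref{t3.1}), the almost metric cone theorem yields that, on a sufficiently small scale $r$, the rescaled ball $(r^{-1}B(p,r))$ is $\delta(\varepsilon)$-close in Gromov-Hausdorff distance to $B^n_0(1)$, with $\delta(\varepsilon)\to 0$ as $\varepsilon\to 0$, uniformly in $p$ on a neighborhood.

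Next, I would lift this picture back to the approximating manifolds $Y_k$. For each such $k$ large, on the corresponding ball the almost splitting theorem produces $n$ harmonic functions $b^1_k,\dots,b^n_k$ (obtained by harmonically approximating distance/Busemann type functions to well-chosen pairs of points in $Y_k$) whose gradients are $\eta_k$-orthonormal in the $L^2$ sense, with $\eta_k\to 0$. These functions have Lipschitz bounds depending only on $n$ by Cheng-Yau gradient estimates. Passing to a subsequential limit (Arzel\`a-Ascoli applied using the bi-Lipschitz comparison with the approximations $\psi_k$), one obtains Lipschitz harmonic functions $b^1,\dots,b^n$ on a neighborhood $U$ of $p$ in $Y_\infty$, and the coordinate map
\[
\Phi=(b^1,\dots,b^n):U\longrightarrow \RR^n
\]
is the natural candidate chart.

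The crucial step is to show $\Phi$ is a bi-H\"older homeomorphism with exponent $\alpha(\varepsilon)\to 1$. Here I would rely on the Cheeger-Colding segment inequality applied to $\sum_i ||\nabla b^i_k|^2-1|$ and to $\sum_{i\neq j}|\langle \nabla b^i_k,\nabla b^j_k\rangle|$, which are each small in $L^2$. Integrating along geodesics (after throwing away a small set of bad pairs) gives, for most $x,y$ near $p$, the estimate $|\Phi(x)-\Phi(y)|^2=(1+o(1))d(x,y)^2$ up to an additive error scaling like $d(x,y)^{1+\beta}$ for some small $\beta$. A standard iteration/telescoping argument along chains converts this into a genuine two-sided H\"older estimate on all pairs, producing the exponent $\alpha(\varepsilon)$ with $\alpha(\varepsilon)\to 1$. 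Injectivity follows from the lower estimate and openness from invariance of domain.

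Finally, one equips $\overset{\circ}{\cRe}$ with the smooth atlas coming from these harmonic coordinate charts; compatibility of transition maps is automatic because they are smooth functions between open sets of $\RR^n$ (indeed elliptic regularity for the underlying harmonic structure on the $Y_k$ passes to the limit for the transition maps, which are limits of smooth maps with uniform $C^{k,\alpha}$ bounds). The smooth Riemannian metric referred to in the statement can be taken to be the Euclidean metric pushed around by these charts and glued by a partition of unity. The main obstacle I anticipate is the quantitative control needed to promote the $L^2$-almost-orthonormality of the $\nabla b^i_k$ into a pointwise bi-H\"older estimate on $\Phi$ with an exponent that actually tends to $1$; this forces genuine use of the Poincar\'e and segment inequalities of Cheeger-Colding and is where the non-collapsing hypothesis plays its decisive role.
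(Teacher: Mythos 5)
This statement is not proved in the paper at all: it is quoted verbatim from Cheeger--Colding (Theorem 5.14 of \cite{Ch-Co}), whose actual proof goes through the ``intrinsic Reifenberg'' theorem of their Appendix A.1 --- one first shows, using volume-ratio monotonicity together with Colding's volume convergence (``almost maximal volume $\Leftrightarrow$ GH-close to the Euclidean ball''), that every point of $\overset{\circ}{\cRe}$ has \emph{all} sufficiently small balls $\dgh$-close to Euclidean balls of the corresponding radius, uniformly on compact subsets, and then an iterative gluing of Gromov--Hausdorff approximations over dyadic scales produces the manifold structure and the bi-H\"older chart with exponent $\alpha(\varepsilon)\to 1$. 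Your proposal replaces this by harmonic splitting charts, and as written it has genuine gaps. First, the step ``compatibility of transition maps is automatic \dots limits of smooth maps with uniform $C^{k,\alpha}$ bounds'' is unfounded: with only $\ric \geq -(n-1)g$ one has Cheng--Yau Lipschitz bounds and $L^2$-Hessian bounds for the $b^i_k$, but no uniform $C^{k,\alpha}$ (not even $C^1$) control, so the limit charts and their overlaps are a priori only H\"older; indeed the theorem claims only bi-H\"older equivalence to a smooth metric, and the smooth structure in Cheeger--Colding is transplanted from the Euclidean model by the Reifenberg construction, not read off from regularity of limit harmonic functions. Second, the crucial bi-H\"older estimate is not a consequence of the segment inequality at one scale: that inequality gives $|\Phi(x)-\Phi(y)|\approx d(x,y)$ only for \emph{most} pairs $(x,y)$ and only up to additive errors comparable to the scale, which yields that $\Phi$ is a GH-approximation, not that it is injective or open. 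Splitting maps need not be homeomorphisms without a genuinely multi-scale argument (the ``telescoping'' you invoke is exactly where the work lies, and making it function requires the almost-Euclidean control at \emph{every} smaller scale and every nearby point --- i.e.\ the propagation step you only assert ``uniformly in $p$ on a neighborhood'', which itself needs monotonicity plus volume convergence, and needs the paper's definition that \emph{every} tangent cone at $p\in\cRe$ is $\varepsilon$-close to $B^n_0(1)$, not just some tangent cone). So while your outline points at real tools from \cite{Ch-Co}/\cite{Col}, it does not constitute a proof of the quoted theorem; the route that is known to work is the Reifenberg-type argument of \cite[Appendix A.1]{Ch-Co}, and the bi-H\"older property of harmonic splitting maps themselves is a considerably more delicate matter than your sketch suggests.
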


\begin{theorem}[\cite{Ch-Co}, Theorem 6.1]
\label{t3.3}
\begin{equation}
 \mathcal{H}^{n-2}(\cS) = 0 \label{e3.3}
\end{equation}
\end{theorem}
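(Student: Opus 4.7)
The plan is to stratify the singular set by the maximal Euclidean factor that its tangent cones admit, and then apply a Federer-type dimension reduction on each stratum. For $0\le k\le n-1$, define
\begin{equation*}
\cS^k = \{p\in Y_\infty : \text{no tangent cone at } p \text{ isometrically splits off an } \RR^{k+1}\text{-factor}\},
\end{equation*}
so that $\cS^0 \subset \cdots \subset \cS^{n-1}=\cS$ (a tangent cone that splits off $\RR^n$ reduces to $\RR^n$ itself, since the complementary factor has Hausdorff dimension zero and must contain the cone vertex). It suffices to prove (i) $\dim_{\mathcal H}(\cS^k)\le k$ for every $k\le n-2$, and (ii) $\cS^{n-1}=\cS^{n-2}$, i.e.\ there are no codimension-one singularities; the sharper conclusion $\mathcal H^{n-2}(\cS)=0$ then follows from an additional density-point argument in the top stratum.

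Step (i) is a standard Federer-type dimension reduction. Assuming $\mathcal H^s(\cS^k)>0$ for some $s>k$, at an $\mathcal H^s$-density point $p\in\cS^k$ one blows up to a tangent cone $C$ which contains a scaling-invariant subset of Hausdorff dimension $\ge s$. The essential ingredient is the \emph{almost splitting theorem} of Cheeger-Colding, which converts approximate Euclidean directions at small scale into an exact isometric $\RR$-factor in tangent cones of tangent cones (which, by a diagonal extraction, remain tangent cones of $Y_\infty$). Iterating the blow-up produces a tangent cone invariant under $\RR^{k+1}$-translations, contradicting $p\in\cS^k$. Rectifiability of the top stratum, established in this framework, then upgrades the dimension bound to the vanishing of $\mathcal H^{n-2}$-measure.

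The main obstacle is step (ii), excluding codimension-one singularities, which is where the non-collapsing hypothesis and the manifold origin of $Y_\infty$ play an essential role. If $p\in\cS^{n-1}\setminus\cS^{n-2}$, some tangent cone at $p$ has the form $\RR^{n-1}\times L$ with $L$ a one-dimensional metric cone; since the only such $L$ are $\RR$ and $[0,\infty)$, and the product with $\RR$ would make $p$ regular, one is reduced to excluding the half-space tangent cone $\RR^{n-1}\times[0,\infty)$, whose volume density at the vertex equals $1/2$. This is ruled out by combining the volume-cone-implies-metric-cone theorem of Cheeger-Colding with the non-collapsing provided by (\ref{e1.1}), (\ref{loc-vol}) and the volume continuity of Theorem \ref{t3.1}: any such tangent cone would force in the closed approximating manifolds $(Y_k,g_k)$ a geometric feature (a near-boundary with quantitative volume defect on one side) that is incompatible with the uniform Ricci lower bound together with the boundarylessness of $Y_k$. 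This rigidity lies at the heart of the Cheeger-Colding structure theory and is the most delicate part of the argument.
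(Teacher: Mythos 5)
The paper offers no proof of this statement at all: it is quoted from \cite{Ch-Co} (Theorem 6.1), so what you have written is necessarily an outline of Cheeger--Colding's own argument, and indeed it follows their strategy (stratification $\cS^0\subset\dots\subset\cS^{n-1}$ by the maximal Euclidean factor of tangent cones, Federer-type dimension reduction through the almost-splitting theorem, exclusion of codimension-one singular points using non-collapsing and the absence of boundary in the approximating manifolds). As a proof, however, it is incomplete exactly where the content lies. Your step (ii), ruling out the half-space tangent cone $\RR^{n-1}\times[0,\infty)$, is only asserted (``a geometric feature \dots incompatible with \dots''); no mechanism is given, and this exclusion is precisely the heart of the cited theorem, where it is carried out quantitatively (almost-splitting coordinates on the smooth $Y_k$ together with a connectedness/degree and volume-convergence argument showing that balls of the closed manifolds cannot converge to a half-ball). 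Likewise the identification $\cS^{n-1}=\cS$, and your remark that ``the product with $\RR$ would make $p$ regular'', silently use the non-collapsed rigidity that a \emph{single} tangent cone isometric to $\RR^n$ forces the density $\theta(p)=1$ and hence \emph{every} tangent cone to be $\RR^n$ (volume convergence plus Bishop--Gromov monotonicity and its equality case); since regularity is defined by all tangent cones being Euclidean, this needs to be argued, not assumed.

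More seriously, the final upgrade from $\dim_{\mathcal H}\cS\le n-2$ to $\mathcal{H}^{n-2}(\cS)=0$ (``rectifiability \dots upgrades the dimension bound to the vanishing of $\mathcal{H}^{n-2}$-measure'', together with an unspecified ``density-point argument'') is a non sequitur --- rectifiability never yields vanishing measure --- and in fact no argument can close this gap: under the hypotheses in force here (non-collapsed pointed limit of $n$-manifolds with $\ric\ge -(n-1)g$) the $(n-2)$-dimensional measure of the singular set can be positive. For instance $T^{n-2}\times C(S^1_\beta)$ with cone angle $\beta<2\pi$ is a non-collapsed limit of $T^{n-2}\times(\text{smoothed cones})$ with nonnegative Ricci curvature, and its singular set $T^{n-2}\times\{\text{vertex}\}$ has positive $\mathcal{H}^{n-2}$-measure. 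What \cite{Ch-Co} actually proves, and all that the present paper uses later (in the proof of Proposition \ref{p4.1} only $\dim_{\mathcal H}(\cS)\le n-2<n-1$ enters), is the Hausdorff dimension bound; the displayed equality (\ref{e3.3}) is an overstatement of the quoted result. So the correct repair is to aim at the dimension bound, state explicitly the density rigidity giving $\cS=\cS^{n-2}$ in the non-collapsed case, and either cite or genuinely prove the half-space exclusion; the claimed measure-zero refinement should be dropped.
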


\begin{remark}
\label{r3.1}
Clearly, $\cR = \cap_{\varepsilon>0} \cRe$. The sets $\cRe$, $\cR$ are 
not necessarily open. However, for any $\varepsilon >0$, there 
is some $\varepsilon>\delta>0$ such that $\cR_\delta \subset \overset{\circ}{\cRe}$ 
(see \cite[Appendix A.1.5]{Ch-Co}). 
In  \cite[Section 3]{Ch-Co2}, it is also proved that $\overset{\circ}{\cRe}
$ is path connected. This important fact will be used in the last part 
of this text.
\end{remark}

We now study the density of the Hausdorff measure. A consequence of  Bishop's  inequality is that 
 $$\limsup_{r\ra0} \frac{\H(B(p,r))}{\vol_{\RR^n}(r)}\leq 1.$$ 
\begin{definition} The density at $p$ of $Y_\infty$ is 
\begin{equation}
\theta(p):=\liminf_{r\ra 0} \frac{\H(B(p,r))}{\vol_{\RR^n}(r)}. \label{density}
\end{equation}
\end{definition}
A consequence of \cite[A.1.5]{Ch-Co} is the existence of some 
positive function 
$\tau(\varepsilon)$, with $\tau(\varepsilon) \ra 0$ as $\varepsilon \ra 0$, such that 
for every $p \in \cRe$, 
\begin{equation}
 \theta(p) > 1-\tau(\varepsilon).
\end{equation}

Conversely, there exists a positive function $\varepsilon(\tau)$, satisfying
 $\varepsilon(\tau) \ra 0$ as $\tau \ra 0$ and such that 
\begin{equation}
  \theta(p) \geq 1-\tau  \Longrightarrow p \in \cR_{\varepsilon(\tau)}\,.
\end{equation}

\begin{remark}\label{rk-density}
A point $p$ is regular if and only if $\theta(p) = 1$. From now on, we consider $\varepsilon \leq \varepsilon_0$, where $\varepsilon_0 \leq 
\varepsilon_n$ is sufficiently small so that $\tau(\varepsilon_0)
< 1/2$, the density is thus strictly greater than $1/2$ on $\cRe$. 
\end{remark}

%%%%%%%%%%%%%%%%%%%%%%%%%%%%%%Application limite%%%%%%%%%%%%%%%%%%%%%

\subsection*{Existence of the natural map at the limit} 
Let us now prove (2) of Proposition \ref{prop3.3}.

\begin{proof}

For every $k \in \NN$ and $c>h(g_k)$, there exists a natural map 
$F_c : (Y_k,g_k) \rightarrow (X,g_0)$, described in Section 2. We need to choose the values of $c$ for each $g_k$ in order that $F_c$ to satisfies some good properties. One argues as follows. 

Given $m \in \NN^*$, one chooses positive numbers $\varepsilon_m \leq \varepsilon_{2}(m)$ and 
$\delta_m \leq \delta_{2}(m)$ sufficiently small such that $\kappa(\varepsilon_m,\delta_m,m) \leq \frac{1}{m}$, where $\delta_2$, $\varepsilon_2$ and $\kappa$ are given by Lemma \ref{l2.6}. One then defines
$$\alpha_m = \max\Big\{\alpha_1(\varepsilon_m,\delta_m),\alpha_2(\varepsilon_m,\delta_m), \alpha_3(\varepsilon_m,\delta_m)
\kappa(\varepsilon_m,\delta_m,m)\Big\}.$$
We check that  $\alpha_m \ra 0$ as $m \ra +\infty$. By the hypothesis (\ref{as_vol}), there exists 
$k_1(m) \in \NN$ such that for any $k \geq k_1(m)$, $\vol_{g_k}(Y_k) \leq (1+\varepsilon_m)
\vol_{g_0}(X)$. Since for $m$ fixed $B_{g_k}(y_k,m)$ converges to $B_{\infty}(y_\infty,m)$, there exists $k_2(m) \in \NN$ such that 
for any $k \geq k_2(m)$, there exists $\alpha_m$-approximations from $B_\infty(y_\infty,m)$ to $B_{g_k}(y_k,m)$. Define $k(m):= \max\{k_1(m),k_2(m)\}$ and let 
 $\psi_m:B_\infty(y_\infty,m)\lra B_{g_k}(y_{k(m)},m)$ be an $\alpha_m$-approximation. One can 
 assume that $\psi_m(y_\infty) = y_{g_{k(m)}}$. Choose $h(g_k)<c_m<h(g_k) + \delta_m$ and 
 consider  $$ F_{c_m} \circ \psi_m : B_\infty(y_\infty,m)\lra X.$$
 Lemma \ref{l2.6} applies to $F_{c_m}$ on $B_{g_{k(m)}}(y_{k(m)},m)$. Hence, for any 
 $p,q \in B_\infty(y_\infty,m)$, 
 \begin{eqnarray*}
 d_{g_0}(F_{c_m} \circ \psi_m(p),F_{c_m} \circ \psi_m(q))& \leq & 
(1+\alpha_m) d_{g_k}(\psi_m(p),\psi_m(q)) + \alpha_m\\
& \leq & (1+\alpha_m)d_\infty(p,q) + (1+\alpha_{m})\alpha_m + \alpha_m.
\end{eqnarray*}
Applying the same reasoning as in Ascoli's theorem, one can show that for any compact $K \subset Y_\infty$, there exists a  sub-sequence of $F_{c_m}$ converging to a map $F_K : K \rightarrow X$. We denote it by $F_{c_{\phi (m)}}$. If one uses an exhaustion of $Y_\infty$ by compact sets and a standard diagonal process, one can extract a sub-sequence of $F_{c_{\phi (m)}}\circ \psi_{\phi (m)}$ which converges uniformly on any compact set to a map $F:Y_\infty \ra X$. It is easy to see that  the map $F$ is 1-lipschitz.

Then one renumbers the sub-sequences $Y_{k(\phi(m))}$, $\psi_{\phi(m)}$ and $F_{c_{\phi(m)}}$ 
such that, for any $m \in \NN^*$, 
$\vol_{g_m}(Y_m) \leq (1+\varepsilon_m) \vol_{g_0}(X)$, $h(g_m) < c_m < c_m + \delta_m$, 
 the inequalities of Lemmas \ref{l2.2}, \ref{l2.4} hold with $\alpha_1$, $\alpha_2$,  $\alpha_3$
replaced by $\alpha_m$ and those of Lemmas \ref{l2.5}, \ref{l2.6} hold on $B(y_m,m)\subset Y_m$ with $\kappa$ replaced by $\alpha_m$. For simplicity, the map $F_{c_m}$ will be denoted $F_m$.
\end{proof}

%%%%%%%%%%%%%%%%%%%%%%%%%%%%  F isomÈtie %%%%%%%%%%%%%%%%%%%%%%

\section{The limit map $F:Y_{\infty} \lra X$ is  isometric}

%%%%%%%%%%%%%%%%%%%%%%  prÈservation volume%%%%%%%%%%%%%%%%%%%%˘

In this section we aim at proving that the limit map $F=\lim F_{k} \circ \psi_k $ is an isometry, 
\textsl{i.e.} it is distance preserving.
We prove first that $F$ preserves the volume.
\begin{lemma}
\label{l4.1}
Let $A \subset Y_{\infty}$ be a measurable subset. Then,
\begin{equation}
\vol_{g_0}(F(A)) = \H(A)\,.  \label{e4.1}
\end{equation}
\end{lemma}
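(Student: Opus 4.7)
The strategy is to prove the upper bound using the $1$-Lipschitz property of $F$, then prove the matching lower bound first on balls centered at $y_\infty$ by transferring the near-isometric and near-injective behaviour of the natural maps $F_k$ to $F$, and finally extend to arbitrary measurable sets by additivity.

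The upper bound $\vol_{g_0}(F(A))\leq \H(A)$ follows immediately from Proposition \ref{prop3.3}: since $F$ is $1$-Lipschitz, $\H(F(A))\leq \H(A)$, and on the smooth hyperbolic manifold $(X,g_0)$ the $n$-dimensional Hausdorff measure coincides with the Riemannian volume.

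For the lower bound on a ball $B := B_\infty(y_\infty,R)$, let $B_k := B_{g_k}(y_k,R)\subset Y_k$, so that $\vol_{g_k}(B_k)\to \H(B)$ by Theorem \ref{t3.1}. The area formula gives
\[
\int_{B_k} |\jac F_k|\,\dvg = \int_X N(F_k|_{B_k},x)\,\dvgo.
\]
By Lemma \ref{l2.3}, $|\jac F_k|\geq 1-\alpha_k$ on a set $Y_{\alpha_1}$ of relative volume $\geq 1-\alpha_k$, and $|\jac F_k|\leq 1+\alpha_k$ everywhere. By Lemma \ref{l2.7}, $\int_{X\setminus X_1}N(F_k,x)\,\dvgo \leq \alpha_k\,\vol_{g_0}(X)$, and the area formula then forces $\vol_{g_k}(F_k^{-1}(X\setminus X_1))=O(\alpha_k)$. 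On the ``good'' subset $B_k^\ast := B_k\cap Y_{\alpha_1}\cap F_k^{-1}(X_1)$, $F_k$ is injective with Jacobian $\geq 1-\alpha_k$, so
\[
\vol_{g_0}(F_k(B_k))\;\geq\;\vol_{g_0}(F_k(B_k^\ast))\;\geq\;(1-\alpha_k)\vol_{g_k}(B_k)-O(\alpha_k).
\]
Since $F_k\circ\psi_k\to F$ uniformly on compacts and $F_k$ is almost $1$-Lipschitz on $B_{g_k}(y_k,R')$ for $R'>R$ by Lemmas \ref{l2.5}--\ref{l2.6}, one obtains $F_k(B_k)\subset V_{\eta_k}(F(\overline{B}))$ for some $\eta_k\to 0$, whence $\vol_{g_0}(F(\overline{B}))\geq \H(B)$ in the limit. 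For a.e.\ $R$, Bishop-Gromov yields $\H(\partial B)=0$, giving equality $\vol_{g_0}(F(B))=\H(B)$ for all such $R$.

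To extend to a general measurable $A$, write $B=A_R\sqcup A'_R$ with $A_R := A\cap B$. The upper bound applied separately to $A_R$ and $A'_R$, combined with the equality on $B$, forces $\vol_{g_0}(F(A_R))=\H(A_R)$ (and incidentally $\vol_{g_0}(F(A_R)\cap F(A'_R))=0$). Letting $R\to\infty$, the hypothesis $\vol_{g_k}(Y_k)\to \vol_{g_0}(X)$ prevents mass from escaping to infinity, so monotone convergence on both sides yields the claim. The main obstacle is the lower bound step: controlling simultaneously the non-injectivity and the volume distortion of $F_k$ on the ``bad'' sets requires the quantitative estimates of Lemmas \ref{l2.3} and \ref{l2.7}, and the almost-Lipschitz bound of Lemma \ref{l2.6} is essential to transfer the estimate from the approximating sequence $F_k\circ\psi_k$ to the limit map $F$.
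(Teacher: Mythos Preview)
Your proof is correct and follows essentially the same approach as the paper: the $1$-Lipschitz upper bound, the lower bound on balls via the quantitative Jacobian and near-injectivity estimates of Lemmas~\ref{l2.3} and~\ref{l2.7} transferred through the approximating maps $F_k$ (using Lemma~\ref{l2.6} and Theorem~\ref{t3.1}), and then an additivity argument for general $A$. The only minor differences are that the paper treats balls centered at arbitrary points rather than just $y_\infty$, and establishes full Hausdorff convergence of $\overline{F_k(B_{g_k}(\psi_k(p),r))}$ to $\overline{F(B_\infty(p,r))}$ rather than just your one-sided inclusion $F_k(B_k)\subset V_{\eta_k}(F(\overline B))$; both simplifications on your side are harmless.
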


\begin{proof} 
It suffices to prove the lemma when the set $A$ is an open ball. Indeed, let us assume that $F$ preserves the volume of balls and let $A$ be a measurable set included in a ball $B:=B_\infty (p, r)$. Since $F$ is contracting it does not increase the volumes (see \cite[Proposition 3.5]{Mor}). Now, if $\vol_{g_0}(A)<  \H(A)$ and since we have $\vol_{g_0}(B\setminus A)\leq  \H(B\setminus A)$ we have a contradiction with the preservation of the volume of $B$. Similarly, if $A$ is a measurable set of finite measure we can apply the same argument with $A$ and $B\setminus A$ for any ball $B$.

It is then enough  to prove 
that for every $B_\infty(p,r) \subset Y_{\infty}$, $\vol_{g_0}(F(B_\infty(p,r)))\geq \H(B_\infty(p,r))$. 
By construction, $\overline{F(B_\infty(p,r))}$ is  the Hausdorff 
limit of $\overline{F_k \circ \psi_k(B_\infty(p,r))}$. 

We first  show that  this is also the Hausdorff limit of $\overline{F_k(B_{g_k}(\psi_k(p),r))}$. 
Let $x \in \overline{F(B_\infty(p,r))}$ and $x_{k} \in F(B_\infty(p,r))$ such that $x_{k}\ra x$. Let $p_{k} \in B_\infty(p,r)$ such that 
$F(p_{k}) = x_{k}$. By definition of the $\alpha_{k}$-approximation, one has 
$d_{g_{k}}(\psi_{k}(p_{k}),\psi_{k}(p)) < r + \alpha_{k}$. There exists  $z_k \in B_{g_k}(\psi_{k}(p),r)$ such that 
$d_{g_{k}}(\psi_{k}(p_{k}),z_{k}) < \alpha_{k} $ (for example $z_{k}$ may be on the segment $[\psi_{k}(p_{k}),\psi_{k}(p)]$). 
Note that, by the triangular inequality,  $d_\infty(p_{k},y_{\infty}) \leq r + d_\infty(p,y_{\infty})$ and recall that $\psi_{k}(y_\infty) = y_{g_{k}}$. Thus 
$\psi_{k}(p_{k})$ remains at bounded distance from $y_{g_{k}}$. Then, 
applying Lemma \ref{l2.6} we have    
\begin{eqnarray*}
 d_{g_{0}}(F_{k}(z_k),F_{k}(\psi_{k}(p_{k})) & \leq & (1+\alpha_{k}) d_{g_{k}}(z_k,\psi_{k}(p_{k})) + \alpha_{k}\\
 		& \leq & (1+\alpha_{k}) \alpha_{k}+ \alpha_{k}\\
		& \underset{k\to +\infty}{\longrightarrow} & 0.
\end{eqnarray*}	
On the other hand, since $F_{k}\circ \psi_k$ converges uniformly to $F$ on compact sets, $ F_{k}(\psi_{k}(p_{k}))$ has the 
same limit as $F(p_{k})=x_{k}$, that is $ F_{k}(\psi_{k}(p_{k})) \ra x$. From the inequality above one deduces that 
$F_{k}(z_k) \ra x$ which shows that $x \in \lim_{k \ra \infty} 	\overline{F_k(B_{g_k}(\psi_k(p),r))}$. 
One has then proved that $\overline{F(B_\infty(p,r))} \subset \lim_{k \ra \infty} 	\overline{F_k(B_{g_k}(\psi_k(p),r))}$. In order to prove the other inclusion one argues similarly. Given $x \in \lim_{k \ra \infty} 	\overline{F_k(B_{g_k}(\psi_k(p),r))}$, there exists 
$x_k \in F_k(B_{g_k}(\psi_k(p),r))$ such that $x_k \ra x$, with 
$x_k = F_k(z_k)$ where $z_k \in B_{g_k}(\psi_k(p),r)$. As $\psi_k$ is an $\alpha_k$-approximation from $B_\infty(y_\infty,k)$ to $B(y_{g_k},k)$, one has the inclusion
$B_{g_k}(\psi_k(p),r)\subset U_{\alpha_k} \psi_k(B_\infty(p,r+\alpha_k))$ for large $k$, thus there exists 
$q_k \in B_\infty(p,r+\alpha_k)$ satisfying  $d_{g_k}(z_k,\psi_k(q_k))<\alpha_k$. As 
$Y_\infty$ is a length space, there exists $q'_k \in B_\infty(p,r)$ such that 
$d_\infty(q_k',q_k) < \alpha_k$. Then 
$d_{g_k}(\psi_k(q_k'),z_k) \leq d_{g_k}(\psi_k(q_k'),\psi_k(q_k)) + 
d_{g_k}(\psi_k(q_k),z_k)) < 3\alpha_k$. 
Thus 
\begin{eqnarray*}
d_{g_0}(F_k \circ \psi_k(q_k'),x_k)= d_{g_0}(F_k \circ \psi_k(q_k'),F_k(z_k)) 
& \leq & (1+\alpha_k) d_{g_k}(\psi_k(q_k'),z_k) + \alpha_k \\
& \leq & (1+\alpha_k)3\alpha_k + \alpha_k \ra 0.
\end{eqnarray*}
 Hence $d_{g_0}(F_k \circ \psi_k(q'_k),x) \ra 0$. 
As $ F_k \circ \psi_k$ converges uniformly to $F$ on compact sets, 
one has $d_{g_0}(F(q'_k),x) \ra 0$ thus $x \in \overline{F(B_\infty(p,r))}$.  This shows that $x \in \overline{F(B_\infty(p,r))}$ is the Hausdorff limit of $\overline{F_k(B_{g_k}(\psi_k(p),r))}$.

In order to prove the lemma it is then sufficient to prove that 
\begin{equation}\label{76}
\liminf_{k \ra +\infty}\vol_{g_0}(\overline{F_k(B_{g_k}(\psi_k(p),r)})\geq \liminf_{k \ra +\infty}\vol_{g_0}(F_k(B_{g_k}(\psi_k(p),r)) \geq \H(B_\infty(p,r))\,.
\end{equation}
Indeed, inequality (\ref{76}) will imply that 
$$\vol_{g_0}(F(\overline{B_\infty(p,r)}))\geq\vol_{g_0}(\overline{F(B_\infty(p,r))})\geq \H(B_\infty(p,r))$$ 
and thus
$\vol_{g_0}(F(B_\infty(p,r))) \geq \H(B_\infty(p,r))$
since $F$ being Lipschitz, we have 
$$\vol_{g_0}(F(\overline{B_\infty(p,r)}))=\vol_{g_0}(F(B_\infty(p,r))).$$ 
Recall that $N(F_k, x)$ is the number of preimages of $x$ by $F_k$. 
We denote by $X_{k,1}$ the set of $x\in X$ such that $N(F_k, x)=1$. 
The construction of the sequence $(F_k)$, Lemma \ref{l2.7} and our choice of the 
$\alpha_k$'s imply 
that $\vol_{g_0}(X_{k,1}) \geq (1-\alpha_k)\vol_{g_0}(X)$ and 
\begin{equation}
\int_{X\setminus X_{k,1}} N(F_k, x)\dvgo(x) \leq \alpha_k \vol_{g_0}(X)\,. \label{e4.2}
\end{equation}

We also denote by $Y_{k,\alpha_k}$ the set of $y \in Y_k$ such that 
\begin{equation}
1-\alpha_k \leq |\jac F_k(y)| \leq  1+\alpha_k.  \label{e4.3}
\end{equation}
Then Lemma \ref{l2.3} implies that 
$\vol_{g_k}(Y_{k,\alpha_k}) \geq (1-\alpha_k)\vol_{g_k}(Y_k)$, for $k$ large enough. We then have 
\begin{eqnarray}
\vol_{g_0}(F_k(B_{g_k}(\psi_k(p),r)))&= &  \int_{F_k(B_{g_k}(\psi_k(p),r))} \dvgo \nonumber\\
& = & \int_{F_k(B_{g_k}(\psi_k(p),r))\cap X_{k,1}} N(F_k, x)\dvgo(x) +
 \vol_{g_0}(F_k(B_{g_k}(\psi_k(p),r))\setminus X_{k,1}) \nonumber \\
&  \geq & \int_{B_{g_k}(\psi_k(p),r) \cap F_k^{-1}(X_{k,1})\cap Y_{k,\alpha_k}} |\jac F_k(y)| \textrm{dv}_{g_k}(y)  \nonumber \\
& \geq & (1-\alpha_k)\vol_{g_k}\left(B_{g_k}(\psi_k(p),r) \cap F_k^{-1}(X_{k,1}) \cap Y_{k,\alpha_k}\right). \label{e4.4}
\end{eqnarray}

On the other hand, using (\ref{e4.3}) and (\ref{e4.2}) we have
\begin{eqnarray*}
 \vol(F_k^{-1}(X\setminus X_{k,1})\cap Y_{k,\alpha_k})
& \leq & \int_{F_k^{-1}(X\setminus X_{k,1})\cap Y_{k,\alpha_k}} \frac{|\jac F_k|}{1-\alpha_k} \textrm{dv}_{g_k} \\ 
& \leq & \frac{1}{1-\alpha_k}  \int_{X\setminus X_{k,1}} N(F_k, x)\dvgo(x) \\ 
& \leq & \frac{\alpha_k}{1-\alpha_k} \vol_{g_0}(X),
\end{eqnarray*}
consequently   
\begin{align*}
\begin{split}
\vol_{g_k}(B_{g_k}(\psi_k(p),r) \cap F_k^{-1}(X_{k,1}) \cap Y_{k,\alpha_k})& = 
\vol_{g_k}(B_{g_k}(\psi_k(p),r) \cap Y_{k,\alpha_k}) \\ 
& - \vol_{g_k}(B_{g_k}(\psi_k(p),r)\cap F_k^{-1}(X\setminus X_{k,1})\cap Y_{k,\alpha_k})
\end{split}\\
& \geq  \vol_{g_k}(B_{g_k}(\psi_k(p),r)) - \alpha_k \vol_{g_k}(Y_k) - \frac{\alpha_k}{1-\alpha_k}\vol_{g_0}(X).
\end{align*}
Plugging this inequality in (\ref{e4.4}) one gets 
$$  \vol_{g_0}(F_k(B_{g_k}(\psi_k(p),r))  \geq  
 (1-\alpha_k) \vol_{g_k}(B_{g_k}(\psi_k(p),r)) - (1-\alpha_k)  
 \alpha_k \vol_{g_k}(Y_k) - \alpha_k \vol_{g_0}(X). 
$$
As $B_{g_k}(\psi_k(p),r)$  converges to $B_\infty(p,r)$ in the Gromov-Hausdorff topology, 
Theorem \ref{t3.1} implies that $\lim_{k\ra \infty} \vol_{g_k}(B_{g_k}(\psi_k(p),r)) = 
\H(B_\infty(p,r))$, hence 
$$ \liminf_{k\ra \infty} \vol_{g_0}(F_k(B_{g_k}(\psi_k(p),r)) \geq \H(B_\infty(p,r)),$$
which proves the lemma. 
\end{proof}

We now prove that $F$ is injective on the set of points where the density 
is larger than $1/2$.
%%%%%%%%%%%%%%%%%%%%%%%%%% InjectivitÈ%%%%%%%%%%%%%%%%%%%%
\begin{lemma}
\label{l4.2}
The map $F$ is injective on $\cRe$ for $\epsilon\leq \epsilon_0$.  
\end{lemma}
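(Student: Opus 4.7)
The plan is to argue by contradiction using the volume-preservation property of Lemma \ref{l4.1} combined with the density lower bound on $\cRe$ recalled in Remark \ref{rk-density}. Assume that there exist two distinct points $p,q \in \cRe$ with $F(p) = F(q)$. Set $\rho = \tfrac{1}{2} d_\infty(p,q) > 0$, and for every $0 < r < \rho$ consider the two \emph{disjoint} open balls $B_\infty(p,r)$ and $B_\infty(q,r)$ in $Y_\infty$. Apply Lemma \ref{l4.1} to the disjoint union $A_r = B_\infty(p,r) \cup B_\infty(q,r)$; by additivity of the Hausdorff measure on disjoint measurable sets one gets
\[
\H(B_\infty(p,r)) + \H(B_\infty(q,r)) \;=\; \H(A_r) \;=\; \vol_{g_0}(F(A_r)).
\]

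Next, I would exploit that $F$ is $1$-Lipschitz (part (2) of Proposition \ref{prop3.3}): since $F(p)=F(q)$, both $F(B_\infty(p,r))$ and $F(B_\infty(q,r))$ are contained in the geodesic ball $B_{g_0}(F(p),r)$ of $(X,g_0)$. Therefore
\[
\vol_{g_0}(F(A_r)) \;\leq\; \vol_{g_0}(B_{g_0}(F(p),r)).
\]
Combining the last two displays gives $\H(B_\infty(p,r)) + \H(B_\infty(q,r)) \leq \vol_{g_0}(B_{g_0}(F(p),r))$ for every $0 < r < \rho$. The idea is now to divide both sides by $\omega_n r^n$ (the Euclidean ball volume) and let $r \to 0$.

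Since $(X,g_0)$ is smooth, $\vol_{g_0}(B_{g_0}(F(p),r))/(\omega_n r^n) \to 1$. On the other hand, by Remark \ref{rk-density} the choice $\varepsilon \leq \varepsilon_0$ guarantees $\theta(y) > 1/2$ for every $y \in \cRe$, so by the definition of density (\ref{density})
\[
\liminf_{r \to 0} \frac{\H(B_\infty(p,r)) + \H(B_\infty(q,r))}{\omega_n r^n} \;\geq\; \theta(p) + \theta(q) \;>\; 1.
\]
For $r$ sufficiently small this contradicts the previous inequality, proving that no such pair $(p,q)$ exists and $F|_{\cRe}$ is injective.

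The argument is quite short, so the only point that deserves attention is the application of Lemma \ref{l4.1} to the union $A_r$: its image $F(A_r)$ need not be disjoint, but this is precisely why we use the 1-Lipschitz control to squeeze both image pieces into the \emph{single} ball $B_{g_0}(F(p),r)$. The key quantitative input from Cheeger--Colding theory — that the density at an $\varepsilon$-regular point is close to (in particular, strictly greater than) $1/2$ for $\varepsilon \leq \varepsilon_0$ — then forces the desired contradiction. I do not expect any significant obstacle beyond making sure the density bound is applied in the correct (liminf) form.
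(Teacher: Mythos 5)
Your proof is correct and follows essentially the same argument as the paper: disjoint balls around the two alleged preimages, volume preservation from Lemma \ref{l4.1}, the $1$-Lipschitz bound to place both images in a single ball $B_{g_0}(F(p),r)$, and the density bound $\theta > 1/2$ on $\cRe$ from Remark \ref{rk-density} to reach the contradiction $\theta(p)+\theta(q) \leq 1$. No issues.
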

\begin{proof}
Suppose that there are $p_1$,$p_2 \in \cRe$ such that $F(p_1) = F(p_2)$. As $F$ is 
1-lipschitz, we have for every $r>0$,
$$ F\left(B_\infty(p_1,r) \cup  B_\infty(p_2,r)\right) \subset B_{g_0}(F(p_1),r)\,.$$
By the previous lemma,
\begin{eqnarray} 
\H \left(B_\infty(p_1,r) \cup  B_\infty(p_2,r)\right) & = &
 \vol_{g_0}\left(F(B_\infty(p_1,r) \cup  B_\infty(p_2,r))\right)\nonumber  \\
  & \leq & \vol_{g_0}\left(B_\infty(F(p_1),r)\right). \label{e4.8}
\end{eqnarray}
For $r< d(p_1,p_2)/2$ the balls $B_\infty(p_1,r)$ and $B_\infty(p_2,r)$ are disjoint. Hence, dividing (\ref{e4.8}) by $\vol_{\RR^n}(r)$, we get
$$ \frac{\H(B_\infty(p_1,r))}{\vol_{\RR^n}(r)} +\frac{\H(B_\infty(p_2,r))}{\vol_{\RR^n}(r)}
 \leq \frac{\vol_{g_0}\left(B_{g_0}(F(p_1),r)\right)}{\vol_{\RR^n}(r)}.$$
Taking the liminf as $r \ra 0$ yields 
$$ \theta(p_1) + \theta(p_2) \leq \theta(F(p_1))=1,$$
which is a contradiction, since $\theta >1/2$ on $\cRe$ if $\varepsilon<\varepsilon_0$ 
(see remark \ref{rk-density}). 
\end{proof}

%%%%%%%%%%%%%%%%%%%%% F_c ouverte%%%%%%%%%%%%%%%%%%

\begin{lemma}
\label{l4.3}
 The map $F$ is open on $\overset{\circ}{\cRe}$ for $\epsilon\leq \epsilon_0$.
\end{lemma}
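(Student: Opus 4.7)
The plan is to reduce the statement to Brouwer's invariance of domain. Recall that $F$ is $1$-Lipschitz by Proposition \ref{prop3.3}, hence continuous, and that the restriction $F|_{\cRe}$ is injective by Lemma \ref{l4.2} (which applies since we are taking $\varepsilon \leq \varepsilon_0 \leq \varepsilon_n$). The only nontrivial ingredient beyond these is the manifold structure on $\overset{\circ}{\cRe}$: by Theorem \ref{t3.2}, the interior $\overset{\circ}{\cRe}$ carries a natural smooth manifold structure of dimension $n$ whose underlying topology is bi-H\"older equivalent to that of a Riemannian metric. In particular, $\overset{\circ}{\cRe}$ is a topological manifold of dimension $n$, and $X$ is a smooth $n$-manifold.

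Once these facts are assembled, the statement is immediate: a continuous injective map between two topological manifolds of the same dimension is open, by Brouwer's invariance of domain (which can be applied locally in charts, reducing to the classical statement for open subsets of $\RR^n$). This gives openness of $F$ on $\overset{\circ}{\cRe}$ at one stroke.

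If one preferred to avoid invariance of domain, an alternative route would be a direct argument using the volume identity of Lemma \ref{l4.1}: assume for contradiction that for some $p \in \overset{\circ}{\cRe}$ the image $F(B_\infty(p,r))$ fails to contain a neighborhood of $F(p)$. Using the $1$-Lipschitz property one has $F(B_\infty(p,r)) \subset B_{g_0}(F(p),r)$, and Lemma \ref{l4.1} yields $\vol_{g_0}(F(B_\infty(p,r))) = \H(B_\infty(p,r))$; combined with the density lower bound $\theta(p) > 1-\tau(\varepsilon)$ from Remark \ref{rk-density} and the Bishop upper bound on $\vol_{g_0}(B_{g_0}(F(p),r))$, one would try to squeeze the ``missing'' part of the target ball into a measure-theoretic contradiction as $r \to 0$. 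I expect this route to be technically more painful, and the main obstacle would be controlling the Hausdorff measure of the boundary region $B_{g_0}(F(p),r) \setminus F(B_\infty(p,r))$ uniformly in $r$.

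For the write-up I would go with the invariance-of-domain argument, which is essentially a one-line proof given Theorem \ref{t3.2} and Lemma \ref{l4.2}. The only subtle point to flag is that ``topological $n$-manifold'' for $\overset{\circ}{\cRe}$ truly follows from the bi-H\"older equivalence in Theorem \ref{t3.2}, and not merely from having a well-defined Hausdorff dimension $n$.
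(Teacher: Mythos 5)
Your proof is correct, but it takes a genuinely different route from the paper's. The paper deliberately avoids doing any degree or domain-invariance argument on the limit space itself (it remarks that $Y_\infty$ is a priori not a manifold, not even locally Lipschitz equivalent to $\RR^n$), and instead transfers the problem to the smooth approximants: for $B_k=B_{g_k}(\psi_k(p),r)$ it shows, via Federer's local degree theory, that $\degree(F_k|B_k)$ is constant on the component of $X\setminus F_k(\partial B_k)$ containing $F_k(\psi_k(p))$ and equals $1$, using the almost-injectivity of $F_k$ (Lemma \ref{l2.7}), the bound $\Ar d F_k\Ar\leq 2\sqrt n$ (Lemma \ref{l2.5}) and the local volume lower bound; it then lets $k\to\infty$ to conclude $B_{g_0}(F(p),\eta/10)\subset F(\overline{B_\infty(p,r)})\subset F(\overset{\circ}{\cRe})$. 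You instead take the Cheeger--Colding structure theorem (Theorem \ref{t3.2}), which the paper quotes, as a black box: the bi-H\"older parametrization makes $\overset{\circ}{\cRe}$ a topological $n$-manifold whose manifold topology agrees with the metric topology, and then continuity (Proposition \ref{prop3.3}) plus injectivity on $\cRe$ (Lemma \ref{l4.2}) give openness by Brouwer's invariance of domain, applied in charts. This is legitimate: invariance of domain needs only a topological manifold structure, so the paper's caveat about the lack of Lipschitz structure is not an obstruction, and the charts are indeed $n$-dimensional in this non-collapsed setting (the point you rightly flag; it comes from the definition of $\cRe$ via tangent cones close to balls in $\RR^n$, not from the Hausdorff dimension statement alone). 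The trade-off is clear: your argument is essentially one line but leans on the deep bi-H\"older theorem in an essential way, whereas the paper's argument is longer but more self-contained and quantitative, using only volume convergence (Theorem \ref{t3.1}), the Section 2 estimates and classical degree theory on the manifolds $Y_k$, and it yields the slightly stronger, effective conclusion that a definite ball $B_{g_0}(F(p),\eta/10)$ is filled by the image of $\overline{B_\infty(p,r)}$. Your alternative volume-squeezing sketch is indeed the harder road and is not needed given the first argument.
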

\begin{proof} 
Let $p \in \overset{\circ}{\cRe}$. We have to prove that 
there exists $\eta>0$ such that $B_{g_0}(F(p),\eta) \subset F(\overset{\circ}{\cRe})$. 
There exists $r>0$ such that $B_\infty(p,2r) \subset \overset{\circ}{\cRe}$. 
For the sake of simplicity we shall note $B:= B_\infty(p,r)$. By the previous lemma, $F(p) \notin F(\partial B)$. Thus, by compactness of $\partial B$ and continuity of $F$, there exists $\eta >0$ such that 
$d_{g_0}(F(p), F(\partial B)) > \eta$. Notice that, since $F$ is $1$-Lipschitz, $\eta<r$. Here, one could use the theory of 
local degree as in \cite[Appendix C]{BCG}, however  $Y_\infty$ is not, \textsl{a priori} a manifold and it may even be not  locally lipschitz equivalent to $\RR^n$. Let $R>2r+d_\infty (y_\infty , p)$ be a fixed radius; it satisfies  
$\psi_k(B_\infty(p,2r)) \subset B_{g_k}(y_{g_k},R)$ for large $k$. 
Let $z_k = \psi_k(p)$ and $B_k :=B(z_k,r)$. The choice of $R$ and the fact that the $\psi_k$'s are approximations 
shows that  $B_k\subset B(y_{g_k},R)$, for $k$ large enough. 
We choose  $k$ large enough such that
$\dh(F_k(\partial B_k),F(\partial B)) \leq \frac{\eta}{10}$. 
This is possible since $\dh (\psi_k(\partial B), \partial B_k)$ goes to zero, 
$F_k \circ \psi_k$ converges to $F$ and $F(p)$ is at distance from $F(\partial B)$ larger than $\eta$.
Let $\cC$ (resp. $\cCk$) be  the connected component of $X\setminus F(\partial B)$ 
(resp. $X\setminus F_k(\partial B_k)$), which contains $F(p)$, 
(resp $F_k(z_k)$). Now the ball $B(F(p),\eta /10)$ is included in $\cC$ and for $k$ large enough
$B(F_k (z_k),\eta /10)$ is included in $\cC _k$. On the other hand
by Corollary 4.1.26 of \cite{Fed}, $\degree(F_k|B_k)$ is constant on $\cCk$, 
where, for a subset $A \subset Y_k$, $$ \degree(F_k|A)(x)= \sum_{y \in F_k^{-1}(x)\cap A} \textrm{sign }\jac F_k(y).$$
We show that  $\degree(F_k|B_k)=1$ on $\cCk$ as follows. We have to show that at least one point 
in $\cCk$ this degree is $1$ since it is constant on this set. In order to do that, we shall show that the set 
of such points has positive measure. Denote again by 
$X_{k,1} \subset X$ the set of $x \in X$ such that $N(F_k, x)=1$, 
that is $x$ has one preimage by $F_k$. By Lemma \ref{l2.7}, 
$\vol_{g_0}(X_{k,1}) \geq (1-\alpha_k)\vol_{g_0}(X)$. The intersection of $X_{k,1}$ with $\cCk$ has a positive measure for $k$ large enough; indeed,  
$B(F_k(z_k),\frac{\eta}{10})\subset \cCk$ and its volume is bounded below by (\ref{loc-vol}) and 
$\vol( B(F_k(z_k),\frac{\eta}{10})\setminus X_{k,1} ) \lra 0$ as $k \ra +\infty$.  Now, by Lemma \ref{l2.5}  
one has $F_k(B(z_k,\frac{\eta}{20\sqrt{n}})) \subset B(F_k(z_k),\frac{\eta}{10})$  and $B(z_k,\frac{\eta}{20\sqrt{n}})\subset B_k$
for large $k$, and  an argument similar to the one used in \ref{76} shows that 
the volume of the image is bounded below. It thus intersects $X_{k,1}$ on a 
set of positive measure for $k$ large enough. This proves that 
$\degree(F_k|B_k)=1$ on $\cCk$. Since $B(F_k(z_k), \eta/10)$ converges to $B(F(p), \eta/10)$, this last ball is included in $\cCk$ for $k$ large; hence, any point 
in $B(F(p),\frac{\eta}{10})$ has a preimage by $F_k$ in $B_k$. 
By taking the limit when $k$ goes to $+\infty$, we get $B(F(p),\frac{\eta}{10}) 
\subset F(\overline{B(p,r)}) \subset F(B(p,2r)) \subset F(\overset{\circ}{\cRe})$. 
\end{proof}  

%%%%%%%%%%%%%%%%%%%%%% F (1+c(epsilon)) bi-Lipschitz%%%%%%%%%%%%%%

\begin{lemma}
\label{l4.4}
There exists $c(\varepsilon) >0$ such that 
$F:\overset{\circ}{\cRe} \lra F(\overset{\circ}{\cRe} )\subset X$ is locally 
$(1+c(\varepsilon))$-bi-Lipschitz. Moreover, $c(\varepsilon) \ra 0$ as $\varepsilon \ra 0$.
\end{lemma}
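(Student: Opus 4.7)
Since $F$ is $1$-Lipschitz (Proposition~\ref{prop3.3}(2)), the upper Lipschitz bound $d_{g_0}(F(p),F(q))\leq d_\infty(p,q)$ is free. The content of the lemma is to establish, for every $p_0\in\overset{\circ}{\cRe}$, a neighborhood $U\ni p_0$ in $\overset{\circ}{\cRe}$ and a constant $c(\varepsilon)\to 0$ such that $d_{g_0}(F(p),F(q))\geq (1-c(\varepsilon))d_\infty(p,q)$ for every $p,q\in U$. I propose a volume-packing argument combining the three previous lemmas with the near-Euclidean geometry of small balls on both sides.

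Fix $p_0\in\overset{\circ}{\cRe}$ and choose a neighborhood $U\ni p_0$ with $\overline U\subset\overset{\circ}{\cRe}$, and $r_0>0$ small enough that the following hold uniformly in $p'\in U$ and $0<r\leq r_0$:
\emph{(i)} $\H(B_\infty(p',r))\geq (1-\tau(\varepsilon))\vol_{\RR^n}(r)$, a uniform version of the density lower bound in Remark~\ref{rk-density};
\emph{(ii)} $\vol_{g_0}(B_{g_0}(F(p'),r)) = (1+o(1))\vol_{\RR^n}(r)$ since small balls in the smooth manifold $(X,g_0)$ are nearly Euclidean, and the intersection $\vol_{g_0}(B_{g_0}(x,R)\cap B_{g_0}(y,R))$ for $x,y\in F(\overline U)$ at distance $s=2R(1-\eta)$ has asymptotic $(1+o(1))\,C_n\vol_{\RR^n}(R)\,\eta^{(n+1)/2}$ as $\eta,R\to 0$ (this is the Euclidean lens volume for two equal balls whose centers are nearly diametrically opposed).

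For $p,q\in U$ with $\rho:=d_\infty(p,q)<r_0$, set $R:=\rho/2$. The balls $B_\infty(p,R)$ and $B_\infty(q,R)$ are disjoint subsets of $\overset{\circ}{\cRe}$, so their $F$-images are disjoint by Lemma~\ref{l4.2}; since $F$ is $1$-Lipschitz, these images lie in $B_{g_0}(F(p),R)$ and $B_{g_0}(F(q),R)$ respectively. Lemma~\ref{l4.1} (volume preservation) combined with \emph{(i)} gives
\begin{equation*}
2(1-\tau(\varepsilon))\vol_{\RR^n}(R)\;\leq\; \vol_{g_0}\bigl(B_{g_0}(F(p),R)\cup B_{g_0}(F(q),R)\bigr);
\end{equation*}
inclusion-exclusion together with the first half of \emph{(ii)} then yields
\begin{equation*}
\vol_{g_0}\bigl(B_{g_0}(F(p),R)\cap B_{g_0}(F(q),R)\bigr)\;\leq\; \bigl(2\tau(\varepsilon)+o(1)\bigr)\vol_{\RR^n}(R).
\end{equation*}
Writing $s:=d_{g_0}(F(p),F(q))=2R(1-\eta)$ and comparing with the Euclidean lens asymptotic in the second half of \emph{(ii)} forces $\eta^{(n+1)/2}\leq C(\tau(\varepsilon)+o(1))$, and hence $\eta\leq c(\varepsilon)$ with $c(\varepsilon)\to 0$ as $\varepsilon\to 0$ (after possibly shrinking $r_0$ further). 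This gives $d_{g_0}(F(p),F(q))\geq (1-c(\varepsilon))\rho$, which is the desired lower Lipschitz bound.

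The main delicate point is securing the \emph{uniform} density estimate \emph{(i)}: Remark~\ref{rk-density} only provides a pointwise $\liminf$ statement $\theta(p')>1-\tau(\varepsilon)$ on $\cRe$. Upgrading this to a uniform bound on $U$ for all sufficiently small $r$ should follow from the bi-H\"older identification of $\overset{\circ}{\cRe}$ with a smooth manifold (Theorem~\ref{t3.2}) together with the Gromov-Hausdorff continuity of the Hausdorff measure (Theorem~\ref{t3.1}) applied to the rescaled balls $(Y_\infty,r^{-1}d_\infty,p')$ as $r\to 0$, exploiting compactness of $\overline U$. The remaining Euclidean estimates are classical.
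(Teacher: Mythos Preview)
Your argument follows the same volume-packing strategy as the paper's proof, and the core inequality is essentially the same. There are three places where the paper's execution differs and is more economical.

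First, you invoke Lemma~\ref{l4.2} to conclude that $F(B_\infty(p,R))$ and $F(B_\infty(q,R))$ are disjoint. This is unnecessary: since $B_\infty(p,R)$ and $B_\infty(q,R)$ are disjoint in $Y_\infty$, Lemma~\ref{l4.1} applied directly to their union already gives $\vol_{g_0}\bigl(F(B_\infty(p,R)\cup B_\infty(q,R))\bigr) = \H(B_\infty(p,R))+\H(B_\infty(q,R))$, and this image sits inside $B_{g_0}(F(p),R)\cup B_{g_0}(F(q),R)$. This also relieves you of the obligation to keep the balls inside $\cRe$.

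Second, your Euclidean lens asymptotic (exponent $(n+1)/2$) is sharper than needed. The paper simply observes that if $x$ is the midpoint of $[F(p),F(q)]$ and $d_{g_0}(F(p),F(q)) \leq 2R\rho$, then $B_{g_0}(x,R(1-\rho))\subset B_{g_0}(F(p),R)\cap B_{g_0}(F(q),R)$; the crude lower bound $(1-\rho)^n\vol_{\RR^n}(R)$ on the intersection already forces $\rho \geq 1 - c(\varepsilon)$.

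Third, and most substantively: your proposed fix for the uniform density estimate (i) is both more complicated than necessary and not quite right as stated---Theorem~\ref{t3.1} concerns the convergence $Y_k\to Y_\infty$, not rescalings of $Y_\infty$, and bi-H\"older charts do not obviously transfer volume ratios. The paper handles this in two lines. At the \emph{single} point $p_0$, choose $r_\varepsilon\leq \varepsilon$ with $\H(B_\infty(p_0,r))\geq (1-\tau(\varepsilon))\vol_{\RR^n}(r)$ for all $r\leq r_\varepsilon$; then for any $q\in B_\infty(p_0,r_\varepsilon^2)$ the inclusion $B_\infty(p_0,r_\varepsilon-r_\varepsilon^2)\subset B_\infty(q,r_\varepsilon)$ transfers the bound to $q$ at the fixed scale $r_\varepsilon$, and the Bishop--Gromov inequality (which passes to $Y_\infty$ via Theorem~\ref{t3.1}) pushes it down to the required scale $r'=d_\infty(p,q)/2$. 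This gives the uniform estimate on the neighborhood $B_\infty(p_0,r_\varepsilon^2)$ with no compactness argument.
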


\begin{proof} 
The idea is the following: we already know that $F$ is $1$-lipschitz and volume preserving. In particular, a ball $B_\infty(p,r) \subset Y_\infty$ is sent into a ball $B_{g_0}(F(p),r) \subset X$. If the ball in $Y_\infty$ is in the almost regular part 
and has a small radius, its volume is close to the Euclidean one, so is 
the volume of the hyperbolic ball. One can then estimate how much the image 
of $B_\infty(p,r)$ is close to fill $B_{g_0}(F(p),r)$. If one considers the images of two disjoint balls, one can estimate how the corresponding hyperbolic balls overlapp, and thus the distance between their centers. 

Let $p \in \overset{\circ}{\cRe} $. Let $r(p,\varepsilon)>0$ be a radius such 
that for every $0< r \leq r(p,\varepsilon)$,
$$\frac{\H(B_\infty(p,r))}{\vol_{\RR^n}(r)} \geq 1-\tau(\varepsilon),$$
and let $r_\varepsilon = \min \{ \varepsilon ,r(p,\varepsilon )\}$. One can assume 
that $r_\varepsilon$ is smaller than the injectivity radius of $X$. 
Let $0<r< r_\varepsilon^2$ be such that $B_\infty(p,r) \subset \cRe$.  For every 
$q \in B_\infty(p,r)$, $B_\infty(p,r_\varepsilon-r_\varepsilon^2) \subset 
B_\infty(q,r_\varepsilon)$. Thus, 
\begin{eqnarray}
\H(B_\infty(q,r_\varepsilon))& \geq & \H(B_\infty(p,r_\varepsilon-r_\varepsilon^2))\\
 &\geq & (1-\tau(\varepsilon))\vol_{\RR^n}(r_\varepsilon-r_\varepsilon^2)\\
& \geq & (1-\tau(\varepsilon))(1-r_\varepsilon)^n \vol_{\RR^n}(r_\varepsilon)\,.\label{e4.5}
\end{eqnarray}    
Suppose that there exists $p_1$,$p_2 \in B_\infty(p,r)$, $p_1 \not= p_2$ and a number $0<\rho<1$ such that $$ d_{g_0}(F(p_1),F(p_2)) \leq \rho d_\infty (p_1,p_2).$$
Define $r'= d_\infty(p_1,p_2)/2>0$ and notice that $r'<r$. 
By (\ref{e3.2}) and the Bishop-Gromov inequality 
(\ref{BG}), for $i=1$, $2$ one has 
$$ \H(B_\infty(p_i,r')) \geq \H(B_\infty(p_i,r_\varepsilon))\frac{\vol_{\HH^n}(r')}{\vol_{\HH^n}(r_\varepsilon)}.$$
Thus, by Lemma \ref{l4.1}, (\ref{e4.5}) and Bishop-Gromov inequality we have
\begin{eqnarray}
\vol_{g_0}\left(F(B_\infty(p_1,r')\cup B_\infty(p_2,r'))\right) & = & \H(B_\infty(p_1,r')) +
\H(B_\infty(p_2,r'))\\
& \geq & 2(1-\tau(\varepsilon))(1-r_\varepsilon)^n \frac{\vol_{\HH^n}(r')}{\vol_{\HH^n}(r_\varepsilon)}
\vol_{\RR^n}(r_\varepsilon)\\
& \geq & 2(1-\tau(\varepsilon))(1-r_\varepsilon)^n 
 \frac{\vol_{\RR^n}(\varepsilon)}{\vol_{\HH^n}(\varepsilon)}\vol_{\RR^n}(r')\\
 & \geq & 2 \vartheta(\varepsilon) \vol_{\RR^n}(r')
  \label{e4.6}
\end{eqnarray}
where $\vartheta(\varepsilon)= (1-\tau(\varepsilon))(1-\varepsilon)^n 
 \frac{\vol_{\RR^n}(\varepsilon)}{\vol_{\HH^n}(\varepsilon)} 
 \ra 1$ as $\varepsilon \ra 0$. 	
%As $r'\leq 
%r_\varepsilon \leq \varepsilon$, we can pick such $\tau'$ depending only 
%on $\varepsilon$, and not on $p$. 

On the other hand,
$$F(B_\infty(p_1,r')\cup B_\infty(p_2,r')) \subset B_{g_0}(F(p_1),r') \cup 
 B_{g_0}(F(p_2),r'),$$
 Hence 
 \begin{align}
 \begin{split}
 \vol_{g_0}\left(F(B_\infty(p_1,r')\cup B_\infty(p_2,r'))\right)  \leq \vol_{g_0}&(B_{g_0}(F(p_1),r')) 
 + \vol_{g_0}(B_{g_0}(F(p_2),r'))\\
 & - \vol_{g_0}(B_{g_0}(F(p_1),r') \cap B_{g_0}(F(p_2),r')).\label{vol-maj}
 \end{split} 
 \end{align}
For any $x \in X$ and any $s>0$ smaller than the injectivity radius of $X$ one has 
$\vol_{g_0}(B(x,s)) = \vol_{\HH^n}(s)$. Let $x$ be the middle point of the segment 
$[F(p_1)F(p_2)]$. Then  
$$B(x,r'(1-\rho)) \subset B(F(p_1),r') \cap B(F(p_2),r').$$
 Indeed, if $x' \in B(x,r'(1-\rho))$ then 
$d(x',F(p_i)) \leq d(x',x) + d(x,F(p_i)) < r'(1-\rho) + \rho r'=r'$ for $i=1$, $2$. 
Thus (\ref{vol-maj}) gives 
\begin{eqnarray}
\vol_{g_0}\left(F(B(p_1,r')\cup B(p_2,r')) \right)& \leq & 2\vol_{\HH^n}(r')-\vol_{\HH^n}(r'(1-\rho))\\
&\leq & 2 \vol_{\RR^n}(r')\frac{\vol_{\HH^n}(r')}{\vol_{\RR^n}(r')  }-(1-\rho)^n\vol_{\RR^n}(r')\\
& \leq & 2 \vol_{\RR^n}(r')\frac{\vol_{\HH^n}(\varepsilon)}{\vol_{\RR^n}(\varepsilon)  }-(1-\rho)^n\vol_{\RR^n}(r')\\
& = & \left(2 \frac{\vol_{\HH^n}(\varepsilon)}{\vol_{\RR^n}(\varepsilon)} - (1-\rho)^n\right) \vol_{\RR^n}(r').
\label{e4.7} 
\end{eqnarray}
For the third inequality we have used Bishop-Gromov's inequality.
From (\ref{e4.6}) and (\ref{e4.7}), we find
$$ (1-\rho)^n \leq 2\left(\frac{\vol_{\HH^n}(\varepsilon)}{\vol_{\RR^n}(\varepsilon)} 
-\vartheta(\varepsilon)\right) \ra 0,$$ 
therefore
$$ \rho \geq 1 - 2^{1/n}\left(\frac{\vol_{\HH^n}(\varepsilon)}{\vol_{\RR^n}(\varepsilon)} 
-\vartheta(\varepsilon)\right)^{1/n} := 1 - c(\varepsilon) \ra 1,$$ 
as $\varepsilon \ra 0$. One has proved that inside the ball $B(p,r)$, 
$$d_{g_0}(F(p_1),F(p_2)) \geq (1-c_1(\varepsilon)) d_\infty(p_1,p_2),$$
and the proof of the lemma follows by choosing $c(\varepsilon )$ so that $1-c_1(\varepsilon)\geq (1+c(\varepsilon))^{-1}$. 
\end{proof}

\begin{remark} On the connected (see Remark \ref{r3.1}) open set $F(\overset{\circ}{\cRe}) \subset X$, the metric $g_0$ induces a distance $\rho_\varepsilon$. 
The above lemma shows that $F:(\overset{\circ}{\cRe},d_\infty) \lra (F(\overset{\circ}{\cRe} ),\rho_\varepsilon)$ is a $(1+c(\varepsilon))$-bi-Lipschitz homeomorphism. If one can 
prove that $\rho_\varepsilon=d_{g_0}$, one deduces that $\cRe$ has bounded diameter. 
One then concludes that $\dgh(Y_k,Y_\infty) \ra 0$ and that $F :Y_\infty \ra X$ is 
isometric. 
\end{remark}

More precisely, we prove the following proposition.
\begin{proposition}\label{p4.1}
The set $F(\overset{\circ}{\cRe})$ satisfies, 
\begin{enumerate}
\item For any $x_1,x_2 \in F(\overset{\circ}{\cRe})$, 
$d_{g_0}(x_1,x_2) = \rho_\varepsilon(x_1,x_2)$.\\
\item $\overline{F(\overset{\circ}{\cRe} )} = X$.\\
\item  $F :(Y_\infty,d_\infty) \lra (X,d_{g_0})$ is an isometry.
\end{enumerate} \end{proposition}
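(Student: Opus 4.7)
The strategy is to establish (2), then (1), then to deduce (3) by combining (1) with Lemma~\ref{l4.4} and letting $\varepsilon\to 0$.

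For (2), the volume-continuity theorem~\ref{t3.1} applied to balls exhausting $Y_\infty$, together with the hypothesis $\vol_{g_k}(Y_k)\to \vol_{g_0}(X)$, yields $\H(Y_\infty)=\vol_{g_0}(X)$. By Theorem~\ref{t3.3} one has $\H(\cS)=0$, hence $\H(\cR)=\vol_{g_0}(X)$, and by Remark~\ref{r3.1} one has $\cR\subset\overset{\circ}{\cRe}$. Lemma~\ref{l4.1} then gives
$$\vol_{g_0}(F(\overset{\circ}{\cRe}))\ \geq\ \vol_{g_0}(F(\cR))\ =\ \H(\cR)\ =\ \vol_{g_0}(X),$$
so the open set $F(\overset{\circ}{\cRe})$ has full $g_0$-volume in $X$ and is therefore dense: $\overline{F(\overset{\circ}{\cRe})}=X$.

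For (1), the inequality $\rho_\varepsilon\geq d_{g_0}$ is trivial from the definition of the induced length distance. For the reverse inequality, note that $F$ is $1$-Lipschitz and Theorem~\ref{t3.3} gives $\mathcal{H}^{n-2}(\cS)=0$, hence $\mathcal{H}^{n-2}(F(\cS))=0$ in $X$. The closed set $X\setminus F(\overset{\circ}{\cRe})$ is contained in $F(\cS)\cup(X\setminus F(Y_\infty))$, a set of $g_0$-measure zero by (2). Given $x_1,x_2\in F(\overset{\circ}{\cRe})$ and $\eta>0$, one perturbs a minimizing $g_0$-geodesic from $x_1$ to $x_2$ to a piecewise smooth path contained in $F(\overset{\circ}{\cRe})$ whose length is at most $d_{g_0}(x_1,x_2)+\eta$; this uses the standard fact that in a smooth Riemannian manifold subsets of Hausdorff codimension at least two cannot increase the intrinsic distance. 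Hence $\rho_\varepsilon(x_1,x_2)\leq d_{g_0}(x_1,x_2)$.

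For (3), write $\tilde d_\infty$ for the length distance on $\overset{\circ}{\cRe}$ induced by $d_\infty$. Path-connectedness of $\overset{\circ}{\cRe}$ (Remark~\ref{r3.1}) and the local bi-Lipschitz control of Lemma~\ref{l4.4} globalize to show that $F:(\overset{\circ}{\cRe},\tilde d_\infty)\to (F(\overset{\circ}{\cRe}),\rho_\varepsilon)$ is $(1+c(\varepsilon))$-bi-Lipschitz. Combining with (1), with $d_\infty\leq \tilde d_\infty$, and with the global $1$-Lipschitz property of $F$, one gets for $p_1,p_2\in\overset{\circ}{\cRe}$
$$d_{g_0}(F(p_1),F(p_2))\ \leq\ d_\infty(p_1,p_2)\ \leq\ \tilde d_\infty(p_1,p_2)\ \leq\ (1+c(\varepsilon))\,d_{g_0}(F(p_1),F(p_2)).$$
For $p_1,p_2\in\cR=\bigcap_{\varepsilon>0}\cRe$, both points lie in $\overset{\circ}{\cRe}$ for every small enough $\varepsilon$; letting $\varepsilon\to 0$ yields $d_\infty(p_1,p_2)=d_{g_0}(F(p_1),F(p_2))$. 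Since $\H(\cS)=0$ and Bishop--Gromov guarantees positive local Hausdorff $n$-measure on $Y_\infty$, $\cR$ is dense in $Y_\infty$, and continuity of $F$ and of the two distances extends the identity to all of $Y_\infty$. Then $F$ is an isometric embedding; completeness of $Y_\infty$ forces $F(Y_\infty)$ to be closed in $X$, and density from (2) forces $F(Y_\infty)=X$. Hence $F$ is an isometry.

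\textbf{Main obstacle.} The delicate step is (1), namely controlling the complement $X\setminus F(\overset{\circ}{\cRe})$. While $F(\cS)$ is of Hausdorff codimension $\geq 2$, the set $X\setminus F(Y_\infty)$ is \emph{a priori} only of $g_0$-measure zero, and one must verify that the combination is still negligible for the path-approximation argument. Carrying this out rigorously in the smooth manifold $X$ is where most of the technical work sits.
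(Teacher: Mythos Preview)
Your approach to (2) via a volume count and to (1) via the Hausdorff codimension of the bad set in $X$ is natural, but both steps have genuine gaps, and the paper proceeds quite differently.

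For (2): the equality $\H(Y_\infty)=\vol_{g_0}(X)$ is not a direct consequence of Theorem~\ref{t3.1} together with $\vol_{g_k}(Y_k)\to\vol_{g_0}(X)$. Theorem~\ref{t3.1} gives $\H(B(y_\infty,R))=\lim_k\vol_{g_k}(B(y_k,R))\leq\lim_k\vol_{g_k}(Y_k)=\vol_{g_0}(X)$, hence $\H(Y_\infty)\leq\vol_{g_0}(X)$; but the reverse inequality would require that no mass escapes to infinity, i.e.\ that $\vol_{g_k}(Y_k\setminus B(y_k,R))\to 0$ uniformly in $k$ as $R\to\infty$. Since the diameters of $(Y_k,g_k)$ are not bounded a priori (indeed, bounding them is essentially the content of the proposition), this is precisely what is not yet known. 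Your argument for (2) is therefore circular.

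For (1): you correctly isolate the obstruction. You only know that $X\setminus F(Y_\infty)$ has $g_0$-measure zero, and measure zero is \emph{not} sufficient for the ``geodesics avoid the bad set'' argument (a smooth hypersurface has measure zero yet disconnects). You note this obstacle but do not resolve it.

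The paper sidesteps both difficulties with one idea: instead of bounding $X\setminus F(\overset{\circ}{\cRe})$, it \emph{lifts} a perturbed geodesic from $X$ back to $Y_\infty$. Concretely, after choosing a geodesic $\gamma_{s,v}$ from $x_1$ to a point near $x_2$ that avoids $F(\cS)$ (this uses only $\dim_\h F(\cS)\leq n-2$, via a cone-projection argument), one lifts $\gamma_{s,v}$ through the local bi-Lipschitz homeomorphism $F|_{\overset{\circ}{\cRe}}$ (Lemmas~\ref{l4.2}--\ref{l4.4}) to a path $\beta$ in $\overset{\circ}{\cRe}$. If the lift were defined only on $[0,t_0)$ with $t_0<1$, the length bound on $\beta$ from Lemma~\ref{l4.4} produces a limit point $p\in Y_\infty$ with $F(p)=\gamma_{s,v}(t_0)\notin F(\cS)$, which forces $p\in\cR\subset\overset{\circ}{\cRe}$; openness of $\overset{\circ}{\cRe}$ and of $F$ then extends the lift past $t_0$, a contradiction. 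Thus $\gamma_{s,v}\subset F(\overset{\circ}{\cRe})$ and (1) follows. This lifting never refers to $X\setminus F(Y_\infty)$ at all; and the same argument, run along a geodesic aimed at a hypothetical ball in $X\setminus F(\overset{\circ}{\cRe})$, proves (2) directly. The missing ingredient in your outline is exactly this use of Lemmas~\ref{l4.3} and~\ref{l4.4} to pull paths back to $Y_\infty$ rather than working purely inside $X$.
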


\begin{proof} 
Let $x_1$,$x_2 \in F(\overset{\circ}{\cRe} )$. Without loss of generality, one can suppose that $x_2$ is not in the image of the cut-locus of $x_1$. Clearly,  $\rho_\varepsilon(x_1,x_2)\geq d_{g_0}(x_1,x_2)$. Let $\gamma:[0,1] \lra X$ be a $g_0$-minimal geodesic from $x_1$ to $x_2$. We do not know that $\gamma$ is in $F(\overset{\circ}{\cRe} )$ we then prove that there exist paths in $F(\overset{\circ}{\cRe} )$ arbitrarily close to $\gamma$. Let $r>0$ be a radius such that $B_{g_0}(x_2,r) \subset F(\overset{\circ}{\cRe} )$. We consider geodesics with the origin $x_1$ and the extremity in $B(x_2,\delta)$, for a small $\delta>0$. More precisely, let $u=\dot{\gamma}(0)$, then for any $v \in U_{x_1}X$ such that and $u\perp v$, one defines $\gamma_{s,v}(t) = exp_{x_1}(t(u+s.v)d(x_1,x_2))$. There exists $r(\delta)>0$ such that $\gamma_{s,v}(1) \in B(x_2,\delta)$ if $|s| \leq r(\delta)$ and one can choose $r(\delta)\ra 0$  as $\delta$ goes to $0$. 

We claim that for every $\delta>0$, there exists such $\gamma_{s,v}$ which 
is imbedded in $F(\overset{\circ}{\cRe} )$. 

Let us show that one can find such $\gamma_{s,v}$ disjoint from $F(\cS)$, where $\cS$ is the singular set of $Y_\infty$ defined in \ref{singular}. The idea is that if any 
$\gamma_{s,v}$ would hit $F(\cS)$ at least in one point, then the Hausdorff 
dimension of $F(\cS)$ would be larger than $n-1$, which is a contradiction. 
More precisely, one considers a truncated cone $U_{\delta}$ defined as follows. Let 
$$\Gamma : ]0,r(\delta)] \times (U_{x_1}X \cap u^\perp) \times [0,1] \rightarrow X$$ 
be defined by $\Gamma(s,v,t)=\gamma_{s,v}(t)$. If $\delta$ is sufficiently small, 
$\Gamma$ is an embedding. One defines $U_{\delta}=\Gamma(]0,r(\delta)] \times (U_{x_1}X \cap u^\perp) \times [0,1])$. Let us denote by 
$U_{\delta}(1/2)$ the hypersurface in $U_{\delta}$ defined as 
$\Gamma( ]0,r(\delta)]) \times (U_{x_1}X \cap u^\perp) \times \{1/2\})$. 
\begin{center}
\begin{picture}(0,0)%
\includegraphics{cone.pstex}%
\end{picture}%
\setlength{\unitlength}{2693sp}%
\begingroup\makeatletter\ifx\SetFigFont\undefined%
\gdef\SetFigFont#1#2#3#4#5{%
  \reset@font\fontsize{#1}{#2pt}%
  \fontfamily{#3}\fontseries{#4}\fontshape{#5}%
  \selectfont}%
\fi\endgroup%
\begin{picture}(7342,2337)(2461,-4297)
\put(9406,-3256){\makebox(0,0)[lb]{\smash{{\SetFigFont{9}{10.8}{\rmdefault}{\mddefault}{\updefault}$\delta$}}}}
\put(2476,-3931){\makebox(0,0)[lb]{\smash{{\SetFigFont{10}{12.0}{\rmdefault}{\mddefault}{\updefault}$x_1$}}}}
\put(9001,-2851){\makebox(0,0)[lb]{\smash{{\SetFigFont{10}{12.0}{\rmdefault}{\mddefault}{\updefault}$x_2$}}}}
\put(7246,-3076){\makebox(0,0)[lb]{\smash{{\SetFigFont{10}{12.0}{\rmdefault}{\mddefault}{\updefault}$\gamma_{s,v}$}}}}
\put(5176,-2941){\makebox(0,0)[lb]{\smash{{\SetFigFont{10}{12.0}{\rmdefault}{\mddefault}{\updefault}$U_{\delta}(1/2)$}}}}
\put(5761,-4201){\makebox(0,0)[lb]{\smash{{\SetFigFont{10}{12.0}{\rmdefault}{\mddefault}{\updefault}$U_{\delta}$}}}}
\end{picture}%

\end{center}
Let   $P : U_{\delta} \ra U_{\delta}(1/2)$ be the projection along geodesics 
defined by $P(\gamma_{s,v}(t)) = \gamma_{s,v}(1/2)$. Since we are on a fixed Riemannian manifold, there exists a constant $C>0$ 
such that  $P$ is $C$-lipschitz from $U_{\delta}$ to $X$. In particular, 
$P$ decreases the Hausdorff dimension,that is 
\begin{eqnarray*} 
 \dim_\h (P(U_{\delta} \cap F(\cS)))& \leq &  \dim_\h (U_\delta \cap F(\cS))\\
            &\leq & \dim_\h(\cS) \\
            & \leq & n-2\\
            & < & \dim U_{\delta}(1/2)=n-1.
\end{eqnarray*}
Hence, there exists $x \in U_{\delta}(1/2)$ such that $x \notin \Pi(F(\cS))$. 
This implies that the geodesic $\gamma_{s,v}$ such that $x =\gamma_{s,v}(1/2)$ does 
not intersect $F(\cS)$.  

We now prove that $\gamma_{s,v}$ is embedded in $F(\overset{\circ}{\cRe})$. 
Let $t_0 \in (0,1]$ be maximal such that $\gamma_{s,v}([0,t_0[) \subset 
F(\overset{\circ}{\cRe})$. By Lemma \ref{l4.4}, the path $\beta=F^{-1}\circ \gamma_{s,v}$ is well-defined on $[0,t_0[$ and  has a length bounded by $(1+c(\varepsilon))d(x_1,x_2)$. 
Since $F$ is bi-Lipschitz, $d_{g_k}(\beta (t), \beta (t'))\geq C|t'-t|$ and hence there exists a limit $p=\lim_{t \ra t_0} \beta(t) \in Y_\infty$. By continuity of $F$, $F(p)=\gamma_{s,v}(t_0)$  and  since $\gamma_{s,v}(t_0) \notin F(\cS)$ we have that $p \notin \cS$. This implies that  
$p \in \cR = \cap_{\varepsilon} \cRe = \cap_{\varepsilon>0} \overset{\circ}{\cRe}$ and consequently that $t_0=1$, because $ \overset{\circ}{\cRe}$ is open.
 
Hence 
\begin{eqnarray*}
\rho_\varepsilon(x_1,x_2)&\leq & \ell( \gamma_{s,v}) + d_0(\gamma_{s,v}(1),x_2)\\
 & \leq & \sqrt{1+r^2(\delta)}d_0(x_1,x_2) + \delta
\end{eqnarray*}
 As $\delta$ was arbitrary, this gives $\rho_\varepsilon(x_1,x_2) \leq d_0(x_1,x_2)$. 

The second assertion is proved in a similar way. Suppose there is a ball 
$B(x,r) \subset X\setminus F(\overset{\circ}{\cRe})$ and consider a geodesic $\gamma$ from a point 
$x_1$ inside $F(\overset{\circ}{\cRe})$ to $x$. Then we find another geodesic from $x_1$, 
close to $\gamma$, disjoint from $F(\cS)$ and with extremity in $X\setminus F(\overset{\circ}{\cRe})$. 
Arguing as above, we find a contradiction. 

Now 3) is straightforward. Using the density of $\overset{\circ}{\cRe} $ in $Y_{\infty}$ and of $F(\overset{\circ}{\cRe} )$ in $X$, we find that 
$F: (Y_\infty,d_{\infty}) \lra (X,d_0)$ is a $(1+c(\varepsilon))$-bi-Lipschitz homeomorphism 
for any $0 < \varepsilon < \varepsilon_0$ thus is isometric.
\end{proof}

\begin{proof}[End of Proof of theorem \ref{main-theorem}] 
Proposition \ref{p4.1} implies that the diameter of $(Y,g_k)$ remains 
bounded. Thus, $\dgh((Y,g_k),(Y_{\infty},d_{\infty})) \ra 0$ (for the non pointed convergence). As $(Y_{\infty},d_{\infty})$ is isometric to 
$(X;g_{0})$, one deduces that  $\dgh((Y,g_k),(X,g_0)) \ra 0$ as $k \ra \infty$. 
By theorem A.1.12 of \cite{Ch-Co}, $Y$ is diffeomorphic to $X$. The fact 
that $f$ is homotopic to a diffeomorphism is classic for hyperbolic 
manifolds.
\end{proof}

%%%%%%%%%%%%%%%%%%%%%%% Biblio%%%%%%%%%%%%%%%%%%%%%%%%%%%%%%%%%%


\begin{thebibliography}{99}


\bibitem{be} Laurent Bessi\`eres, \textit{Un th\'eor\`eme de rigidit\'e diff\'erentielle} Comment. Math. Helv. 73, 
1998, no.3, 443-479. 

\bibitem{And} Micha\"el Anderson, \textit{Canonical metrics on 3-manifolds and 4-manifolds} Asian Journal of Math., vol. 10, (2006), 127-164. 

\bibitem{Bur-Zal}Yury Burago, Victor Zalgaller,
{\it Geometric inequalities.}
Translated from the Russian by A. B. Sosinskiĭ. Grundlehren der Mathematischen Wissenschaften [Fundamental Principles of Mathematical Sciences], 285. Springer Series in Soviet Mathematics. Springer-Verlag, Berlin, 1988. xiv+331 pp. ISBN: 3-540-13615-0 

\bibitem{BBI} Dmitri Burago, Yury Burago, Sergei Ivanov, {\it A course in metric geometry} Graduate Studies in Mathematics, 33. American Mathematical Society, Providence, RI, 2001. xiv+415 pp. ISBN: 0-8218-2129-6  

\bibitem{BCG} G\'erard Besson, Gilles Courtois, Sylvain Gallot 
\textit{Entropies et rigidit\'es des espaces localement sym\'etriques de courbure 
strictement n\'egative} [Entropy and rigidity of locally symmetric spaces with strictly negative curvature]  Geom. Funct. Anal.  5  (1995),  no. 5, 731--799. 

\bibitem{BCG2} G\'erard Besson, Gilles Courtois, Sylvain Gallot, 
{\it Minimal entropy and Mostow's rigidity theorem}  Ergodic Theory Dynam. Systems  16  (1996),  no. 4, 623--649.
 
\bibitem{Ch-Co} Jeff Cheeger, Tobias Colding, \textit{On the structure 
of spaces with Ricci curvature bounded below I}, J. Diff. Geom. 46 (1997),
 406-480.

\bibitem{Ch-Co2} Jeff Cheeger, Tobias Colding, \textit{On the structure 
of spaces with Ricci curvature bounded below II}, J. Diff. Geom. 54 (2000),
 13-35.

\bibitem{Col} Jeff Cheeger, Tobias Colding \textit{Lower bounds on Ricci curvature and the almost rigidity of warped products}  Ann. of Math. (2)  144  (1996),  no. 1, 189--237. 

\bibitem{Fed} Herbert Federer,\textit{Geometric measure theory} Die Grundlehren der mathematischen Wissenschaften, Band 153 Springer-Verlag New York Inc., New York 1969 xiv+676 pp.
 
\bibitem{GLP} Micha\"el Gromov, Jacques Lafontaine, Pierre Pansu, \textit{Structures m\'etriques pour les 
vari\'et\'es riemanniennes} C\'edic Fernand Nathan.

 
\bibitem{Gro} Micha\"el Gromov, \textit{Metric structures for Riemannian and non-Riemannian spaces} Based on the 1981 French original. With appendices by M. Katz, P. Pansu and S. Semmes. Translated from the French by Sean Michael Bates. Reprint of the 2001 English edition. Modern Birkh{\"a}user Classics. Birkh{\"a}user Boston, Inc., Boston, MA, 2007. xx+585 pp. ISBN: 978-0-8176-4582-3; 0-8176-4582-9 53C23 (53-02)

\bibitem{Gro2}  Michael Gromov, \textit{Volume and bounded cohomology} Inst. Hautes Études Sci. Publ. Math. No. 56 (1982), 5--99 (1983).
 
\bibitem{Mor} Frank Morgan, \textit{Geometric measure theory, A beginner's guide},  Third edition. Academic Press, Inc., San Diego, CA, 2000. x+226 pp. ISBN: 0-12-506851-4

\bibitem{Far-Jon} Thomas F.~Farrell, Lowell E.~Jones, \textit{Negatively curved manifolds with exotic smooth structures} 
J. Amer. Math. Soc. 2 (1989), no. 4, 899--908.

\bibitem{Kob} Osamu Kobayashi, \textit{Scalar curvature of a Metric with Unit Volume}, Math. Ann. 279, 253--265 (1987).
        
\end{thebibliography}
\end{document}